\documentclass{amsart}

\usepackage{amssymb, amsmath, amsxtra, ulem}
\usepackage{xcolor}

\definecolor{azul}{rgb}{0,0,1}

\definecolor{nuovo}{rgb}{0.75,0.05,0.55}

\usepackage[colorlinks=false]{hyperref}
\usepackage{tikz-cd}
\usepackage{tikz}
\usetikzlibrary{patterns}
\usepackage{palatino}

\usepackage{indentfirst}
\usepackage{graphicx}

\newtheorem{theorem}{Theorem}[section]
\newtheorem{proposition}[theorem]{Proposition}

\newtheorem{lemma}[theorem]{Lemma}
\newtheorem{remark}[theorem]{Remark}
\newtheorem{definition}[theorem]{Definition}
\newtheorem{example}[theorem]{Example}
\usepackage[left=2.9cm,right=2.9cm,top=2.8cm,bottom=2.8cm]{geometry}

\def\re{\mathbb{R}}

\def\dem{\noindent{\it Proof. }}
\def\dys{\displaystyle}
\newcommand{\fim}{\hfill $\Box$}

\usepackage{booktabs,amsmath}

\def\XXint#1#2#3{{\setbox0=\hbox{$#1{#2#3}{\int}$}
    \vcenter{\hbox{$#2#3$}}\kern-.5\wd0}}

\title{Sharp existence and non-existence for singular $1$-Laplace equations with Hardy potentials}

\author{J. C. Ortiz Chata}
\address{Departamento de Matem\'atica, Universidade Federal de São Carlos - UFSCar, 13565-905, S\~ao Carlos - SP, Brazil}
\address{juan.carlos@unesp.br}
\author[F. Petitta]{Francesco Petitta}
\address{Dipartimento di Scienze di Base e Applicate per l' Ingegneria, Sapienza Universit\`a di Roma, Via Scarpa 16, 00161 Roma, Italia}
\address{francesco.petitta@uniroma1.it}

\date{}
\keywords{$1$-Laplace operator; $p$-Laplace operator; Hardy terms, Nonlinear elliptic equations}
 \subjclass[2020]{35J60, 35J75, 34B16, 26A45, 35R99}

\begin{document}
\pretolerance10000

\maketitle

\begin{abstract}

In this paper, we investigate the singular Dirichlet problem
\begin{equation}\label{abs1}
\left\{
\begin{array}{rclr}
\dys -\Delta_1u&=&\dfrac{\lambda}{|x|}\hbox{Sgn}\,(u)+\dfrac{f}{u^\gamma}&\quad\mbox{in}\; \Omega,\\[1ex]
u&=&0&\quad\mbox{on}\; \partial\Omega,
\end{array}
\right.
\end{equation}
where $\Delta_1u=\hbox{div}\left(\frac{Du}{|Du|}\right)$ denotes the $1$-Laplacian operator, the parameters are $\lambda\in \re$ and $\gamma>0$, and $f$ is a non-negative function belonging to the Lorentz space $L^{N,\infty}(\Omega)$. Our main goal is to establish the existence of non-trivial solutions to \eqref{abs1} under {the sharp restriction $\lambda<N-1$}, regardless of the magnitude of the datum $f$ or the value of the singular exponent $\gamma>0$. {We also show that these solutions are globally bounded and that, on the contrary, no solution exists as soon as $\lambda\geq N-1$ and $f$ is positive.} These results are achieved by rigorously analyzing the asymptotic behavior, as $p\to 1^+$, of the solutions to the approximating $p$-Laplace problems
 
\begin{equation}
\left\{
\begin{array}{rclc}
-\Delta_pu&=&\dfrac{\lambda}{|x|^p}|u|^{p-2}u+\dfrac{f}{u^\gamma}&\quad\mbox{ in }\; \Omega,\\[1ex]
              u&=&0&\quad\mbox{ on }\; \partial\Omega.
\end{array}
\right.
\end{equation}
Finally, we provide {a family of} explicit examples designed to illustrate the sharp optimality of our main assumptions.
\end{abstract}

\tableofcontents
\numberwithin{equation}{section}

\section{Introduction}
In this paper, we consider the Dirichlet problem
\begin{equation}
\label{Pintro} 
\left\{
\begin{array}{rclr}
-\Delta_1u&=&\dfrac{\lambda}{|x|}\dfrac{u}{|u|}+\dfrac{f}{u^\gamma}&\quad \mbox{ in } \Omega,\\[1ex]
u&=&0 &\quad \mbox{ on }\partial\Omega,
\end{array}
\right.
\end{equation} 
where $\Delta_1u=\hbox{div}\left(\frac{Du}{|Du|}\right)$ denotes the 1-Laplacian operator, $\Omega \subset \mathbb{R}^N$ $(N\geq2)$ is a bounded open set with Lipschitz boundary $\partial\Omega$ containing the origin $0 \in \Omega$, $\lambda < N-1$, and $f$ is a non-negative datum belonging to the Lorentz space $L^{N,\infty}(\Omega)$.

The unperturbed case $\lambda=\gamma=0$ is by now well understood (\cite{Kawohl1990, ct, KawohlSchuricht2007, mst}) and has traditionally been addressed either variationally or via an asymptotic analysis of solutions $u_p$ to approximating problems involving the $p$-Laplace operator as $p\to1^+$, namely
\begin{equation}
\label{f1}
\left\{
\begin{array}{rclc}
-\Delta_pu_p&=&f&\quad\hbox{in}\;\Omega,\\[0.5ex]
 u_p&=&0&\quad\hbox{on}\;\partial\Omega,
\end{array}
\right.
\end{equation}
where $\Delta_p u=\hbox{div}\left(|\nabla u|^{p-2}{\nabla u}\right)$ is the standard $p$-Laplacian. In \cite{Kawohl1990}, the author investigated the torsion problem associated with \eqref{f1} for $p=1$ (i.e., $f=1$), highlighting how the existence and shape of solutions strongly depend on the geometry and size of $\Omega$, which in turn leads to a free boundary problem (\cite[Section 6]{Kawohl1990}). Furthermore, in \cite{ct, mst}, the authors studied general data $f$ in $L^N(\Omega)$ (as well as in $L^{N,\infty}(\Omega)$ or $W^{-1,\infty}(\Omega)$). Under sharp smallness conditions on the norm of $f$, existence results were established. However, a severe degeneracy phenomenon occurs: the sequence of approximating solutions $u_p$ is shown to vanish identically in the limit as $p \to 1^+$ for \textit{almost all small data}. Nevertheless, for certain specific data with suitably tailored norms, non-trivial solutions, often referred to as \textit{almost 1-harmonic functions} (see \cite{dem}), do exist.

The case $0<\lambda<N-1$ with $\gamma=0$ was recently addressed in \cite{OrtizPetitta2024}. There, the authors proved the existence of solutions in the sense of \cite{acm2001, AndreuMazonMollCaselles2004, AndreuCasellesMazon2004} via an approximation limit $p \to 1^+$. Moreover, they provided a complete characterization of the asymptotic behavior of $u_p$, showing its sharp dependence on the interplay between the parameter $\lambda$ and the size of $f$. Related issues were investigated in \cite{OrtizPetittaP} in the presence of a critical Hardy-type drift term.

Due to the extreme degeneracy inherent to $1$-Laplacian-type operators, considerable attention has recently been devoted to understanding the regularizing properties of singular lower-order terms as a mechanism to prevent triviality and guarantee the existence of non-trivial solutions. Inspired by the case $p>1$, the authors in \cite{DeCiccoGiachettiSegura2019, dgop} observed that singular reaction terms exert a strong regularizing effect. More precisely, regardless of the norm of $f$, they proved the existence of non-trivial solutions to the Dirichlet problem
\begin{equation}
\left\{
\begin{array}{rclr}
-\Delta_1u&=&\dfrac{f}{u^\gamma}&\quad \mbox{ in } \Omega,\\[1ex]
u&=&0 &\quad \mbox{ on }\partial\Omega,
\end{array}
\right.
\end{equation} 
where $\Omega\subset\mathbb{R}^N$ is a bounded Lipschitz domain, $\gamma> 0$, and $f$ is a positive function in $L^{N,\infty}(\Omega)$.

Singular elliptic equations and their regularizing phenomena have been widely studied since the classical works \cite{crt77, Stu} and the landmark paper \cite{lm}, experiencing a renewed surge of interest starting with \cite{BoccardoOrsina2010}{; see also \cite{DDO} for the case of measure data}. For a comprehensive survey on the topic, we refer to \cite{survey}.

To better understand the regularity properties driven by singular terms, let us briefly recall the model problem
\begin{equation}\label{pbintro}
\left\{
\begin{array}{rclr}
-\Delta_p u&=&\dfrac{f}{u^\gamma}&\quad \mbox{ in } \Omega,\\[1ex]
u&=&0 &\quad \mbox{ on }\partial\Omega.
\end{array}
\right.
\end{equation}

When $p=2$, it was shown in \cite{lm} that $u\notin C^{1}(\overline{\Omega})$ whenever $\gamma> 1$, whereas $u\in H^{1}_0(\Omega)$ if and only if $\gamma<3$.

For non-negative data $f \in L^{m}(\Omega)$, the authors in \cite{BoccardoOrsina2010} established the following regularity properties:
\begin{itemize}
    \item[\textit{i)}] If $\gamma<1$ and $m= \left(\frac{2^*}{1-\gamma}\right)'$, then $u \in H^1_0(\Omega)$; if $1 \le m <\left(\frac{2^*}{1-\gamma}\right)'$, then $u \in W^{1,\frac{Nm(\gamma+1)}{N-m(1-\gamma)}}_0(\Omega)$.
    \item[\textit{ii)}] If $\gamma=1$ and $m = 1$, then $u \in H^1_0(\Omega)$.
    \item[\textit{iii)}] If $\gamma>1$ and $m = 1$, then $u \in H^1_{\rm{loc}}(\Omega)$, and 
    \begin{equation}\label{potenza}
    u^{\frac{\gamma+1}{2}}\in H^1_0(\Omega).
    \end{equation}
\end{itemize}

Property \eqref{potenza} is not merely a technical artifact; it provides the precise sense in which the Dirichlet boundary condition is attained. Indeed, as counterexamples show, distributional solutions to \eqref{pbintro} may fail to belong to $H^1_0(\Omega)$ while still satisfying \eqref{potenza}.

In this framework, the classical Lazer–McKenna threshold for the existence of finite energy solutions with $\gamma>1$ was extended to general $f\in L^{m}(\Omega)$: problem \eqref{pbintro} admits a (unique) solution in $H_0^1(\Omega)$ for every positive $f\in L^{m}(\Omega)$ ($m> 1$) if and only if $\gamma < {3-\frac{2}{m}}$ (see \cite{OP}).

Several of these results have been extended to non-linear $p$-Laplacian operators ($p>1$) of the form
\begin{equation} \label{op}
\begin{cases}
- \Delta_p u= f(x)u^{-\gamma} & \text{in } \Omega, \\
u \ge 0 & \text{in } \Omega, \\
u = 0 & \text{on } \partial \Omega.
\end{cases}
\end{equation}

In this context, for smooth positive data, the sharp Lazer–McKenna condition ensuring that $u \in W^{1,p}_0(\Omega)$ becomes (\cite{S})
\begin{equation}\label{condLMp}
\gamma < \frac{2p-1}{p-1}.
\end{equation}
For generic data $f \in L^m(\Omega)$, it was shown in \cite{DCA} that there exist distributional solutions $u \in W^{1,p}_{\rm loc}(\Omega)$ satisfying
\begin{equation} \label{formally}
u^{ \max\left(1,\frac{\gamma-1+p}{p}\right)} \in W^{1,p}_0(\Omega).
\end{equation}
Thus, as in the linear case, when $\gamma>1$ only a suitable power of the solution lies in the Sobolev energy space $W_0^{1,p}(\Omega)$.

In \cite{DeCiccoGiachettiSegura2019}, for $0<\gamma\le 1$, by passing to the limit in approximating problems with $p>1$, the authors proved the existence and (for $f>0$) uniqueness of non-negative bounded solutions to
\begin{equation}\label{now}
\begin{cases}
- \Delta_1 u = f(x){u^{-\gamma}} & \text{in } \Omega, \\
u = 0 & \text{on } \partial \Omega,
\end{cases}
\end{equation}
where $0\le f\in L^{N,\infty}(\Omega)$. For $\gamma>1$, these solutions satisfy $u^{\gamma}\in BV(\Omega)$. Recently, in \cite{maop}, this result was significantly improved by establishing the existence of a true finite energy solution (i.e., $u\in BV(\Omega)$) for problem \eqref{now}, regardless of how large $\gamma>0$ is. Notice that this result perfectly matches the limit of threshold \eqref{condLMp} as $p\to1^+$. It is worth noting that, in \cite{dgop}, the authors studied existence and regularity results for singular elliptic equations with general nonlinearities that include \eqref{now}. 

In \cite{AbdellaouiAttar2013}, for $p>1$, the authors studied the existence and optimal summability of solutions to the following problem
\begin{equation}
\label{op1}
\left\{
\begin{array}{rclr}
-\Delta_p u&=&\lambda\frac{|u|^{p-2}u}{|x|^p}+\frac{f}{u^\gamma}&\mbox{ in }\;\Omega\\
u&=&0&\mbox{ on }\;\partial\Omega
\end{array}
\right.
\end{equation}
with respect to the summability of $f$ and the value of the parameter $\lambda$; {there the notion of entropy solution introduced in \cite{BenilanBoccardoGallouetGariepyPierreVazquez1995} is used}. Moreover, it is well {known} that problem {\eqref{op1}} has a solution in $W^{1,p}_0(\Omega)$  when $\gamma=0$ (for instance, see \cite{AzoreroAlonso1998, LeonoriMartinezPrimo2011, OrtizChata2025}).

Now, let us focus on problem \eqref{op1} in the limiting regime $p=1$, where the natural finite energy space is $BV(\Omega)$, the space of functions of bounded variation.  In order to do so, we study the asymptotic behavior of the solutions of the approximate $p-$Laplace problem \eqref{op1}, as $p$ goes to 1, which, in turn, is considered with the concept of distributional solution as defined in \cite{BoccardoOrsina2010,DCA}. More precisely, the main goal of the present paper is to establish optimal existence and global regularity results for non-trivial \textit{finite energy solutions} to problem \eqref{Pintro}, under {the sharp restriction $\lambda<N-1$} and for any arbitrary $\gamma>0$ and size of $f$. Furthermore, whenever $f$ is strictly positive, we show that strictly positive solutions can be obtained.

{Let us emphasize a structural feature of our result. If $\gamma=0$, whether the limit of the approximating sequence $(u_p)$ is trivial or not is governed by a sharp balance between $\lambda$ and the size of $\|f\|_{L^{N,\infty}(\Omega)}$, and for {\it almost all small data} degeneracy occurs (see \cite{ct, mst, OrtizPetitta2024}). Here, on the contrary, no smallness (nor largeness) assumption on $f$ is needed, for a reason which is structural rather than technical: the requirement $fu^{-\gamma}\in L^1_{\rm loc}(\Omega)$ appearing in the  definition of solution (see $(a)$ of Definition \ref{DS} below) rules out the trivial function as soon as $f\not\equiv0$. Accordingly, the a priori bound \eqref{L15} below, which is central in  the whole argument, does not depend on $f$: it is the singular term which, no matter how small $f$ is, prevents the solution from vanishing.}

{Finally, we show that the restriction $\lambda<N-1$ is sharp: if $f$ is positive and $\lambda\geq N-1$, then problem \eqref{Pintro} admits no solution at all (Theorem \ref{NoEx} below). This is in agreement with the fact that $N-1$ is the best constant in the Hardy inequality in $BV(\Omega)$, which is known not to be attained (\cite{crt, WangWillem}).}

{The paper is organized as follows: in Section \ref{Prel} we collect some preliminary material on Lorentz spaces, functions of bounded variation, and an extended version of Anzellotti's pairing theory. In Section \ref{main} we formally set up the problem, we state our main assumptions and results, and we describe the structure of the proof. Section \ref{ApproxP} is devoted to the approximating $p$-Laplace problems and to the uniform estimates on their solutions, while in Section \ref{PT} we perform the limit as $p\to1^+$. In Section \ref{PosDat}, under the assumption of positive data, we construct strictly positive solutions, and we prove that the limit function has finite energy also in the strongly singular regime. Section \ref{Br} addresses global regularity properties of the solutions obtained and contains the conclusion of the proof of Theorem \ref{mainTh}. Finally, in Section \ref{Opt} we prove that the assumption $\lambda<N-1$ is optimal, and we exhibit a family of explicit solutions illustrating our results.}

\section{Preliminaries}
\label{Prel}

We start with some notations: throughout the paper  $\Omega$  is a bounded open subset of $\mathbb{R}^N$, $B_r(x)$ the ball centering in $x$ with radius $r>0$ and 
$$
\Omega_\epsilon=\{x\in \Omega\colon \hbox{dist}\,(x, \partial\Omega)>\epsilon\}.
$$
 Moreover, we denote the $\mathcal{H}^{N-1}-$dimensional Hausdorff measure on $\mathbb{R}^N$ and, for each Borel set $E\subset \Omega$, we denote its $N-$ dimensional Lebesgue measure on $\mathbb{R}^N$ by $|E|$. Define the truncation $T_k\colon\mathbb{R}\to \mathbb{R}$ by
\begin{equation}
\label{Tk}
T_k(s)=\left\{
\begin{array}{lc}
s&\hbox{if }\; |s|\leq k,\\
k\frac{s}{|s|}&\hbox{if}\;|s|>k,
\end{array}
\right.
\end{equation}
and $G_k\colon \mathbb{R}\to \mathbb{R}$ as
\begin{equation}
\label{Gk}
G_k(s)=s-T_k(s)\qquad\hbox{for all }\;s\in\mathbb{R}.
\end{equation}
 
\subsection{Basics on Lorentz spaces} 
Let $\Omega\subset \mathbb{R}^N$ be a bounded open set. Let $u\colon\Omega\to \mathbb{R}$ be a measurable function. 

The non-increasing rearrangement of $u$ on $(0,|\Omega|)$ is defined by
$$
u^*(t)=\inf\{s>0\colon |\{x\in \Omega\colon |u(x)|>s\}|\leq t\}\quad\hbox{ for all }\; t\in (0,|\Omega|).
$$
For any $1\leq p<\infty$ and $1\leq q\leq \infty$ and $u$ a measurable function in $\Omega$. We define $\|u\|_{L^{p,q}(\Omega)}$ as
\begin{equation}
\label{SN}
\|u\|_{L^{p,q}(\Omega)}=\left\{
\begin{array}{lcr}
\left(\int_{0}^{|\Omega|}(t^{1/p}u^*(t))^q\frac{dt}{t}\right)^{1/q}&\hbox{ if }& 1\leq q<\infty\\
\sup_{0<t<|\Omega|}t^{1/p}u^*(t)& \hbox{ if} & q=\infty.
\end{array}
\right.
\end{equation}
\begin{definition}
Let $1\leq p<\infty$ and $1\leq q\leq \infty$. The Lorentz space $L^{p,q}(\Omega)$ is the set of all measurable functions $u\colon\Omega\to \mathbb{R}$ such that the quantity \eqref{SN} is finite, i.e., 
$$
L^{p,q}(\Omega)=\{u\colon \Omega\to \mathbb{R}\quad\hbox{ measurable function}\colon \|u\|_{L^{p,q}(\Omega)}<\infty\}.
$$
\end{definition}

Observe that, for  $1\leq r\leq \infty$,  the previous definition implies that
$$
L^{r,r}(\Omega)=L^r(\Omega),
$$
while the following inclusions   hold for $q<N$:
$$
L^N(\Omega)\subset L^{N,\infty}(\Omega)\subset L^q(\Omega).
$$

The proof of the following proposition can be found in \cite{ONeil1968} (see also \cite[Appendix]{DAnconaFanelli2007}).

\begin{proposition}[H\"older inequality]
Let $ 1<p<\infty$ and $1\leq q\leq \infty$. For any $u\in L^{p,q}(\Omega)$ and  $v\in L^{p',q'}(\Omega)$ with $1/p+1/p'=1$ and $1/q+1/q'=1$, the following estimates holds:
$$
\|uv\|_{L^1(\Omega)}\leq \|u\|_{L^{p,q}(\Omega)}\|v\|_{L^{p',q'}(\Omega)}.
$$
\end{proposition}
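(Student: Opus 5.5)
The plan is to compare rearrangements and to use the Hardy–Littlewood inequality together with the pointwise (Hölder) inequality for the weights $t^{1/p}$ and $t^{1/p'}$.

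\emph{Step 1 (rearrangement bound).} By the Hardy–Littlewood inequality,
$$\int_\Omega |uv|\,dx\le \int_0^{|\Omega|} u^*(t)v^*(t)\,dt .$$
I would prove it via the layer-cake representation. Write $|u|=\int_0^\infty \chi_{\{|u|>s\}}\,ds$ and similarly for $|v|$. By Fubini,
$$\int_\Omega|uv| = \int_0^\infty\!\int_0^\infty |\{|u|>s\}\cap\{|v|>\sigma\}|\,ds\,d\sigma .$$
Then bound $|\{|u|>s\}\cap\{|v|>\sigma\}|\le \min\bigl(|\{|u|>s\}|,|\{|v|>\sigma\}|\bigr)$. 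The right-hand side is exactly the same double integral computed for $u^*,v^*$ on $(0,|\Omega|)$. This works because $u^*$ is equimeasurable with $|u|$ and the superlevel sets of $u^*$, $v^*$ are nested intervals $(0,a)$, so their intersection has measure equal to the minimum.

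\emph{Step 2 (splitting the weight).} Since $1/p+1/p'=1$, write
$$u^*(t)v^*(t) = \bigl(t^{1/p}u^*(t)\bigr)\bigl(t^{1/p'}v^*(t)\bigr)\frac{dt}{t}\cdot t^{\,1-1/p-1/p'}\cdot\frac{t}{dt},$$
that is, $\int_0^{|\Omega|} u^*v^*\,dt=\int_0^{|\Omega|}\bigl(t^{1/p}u^*\bigr)\bigl(t^{1/p'}v^*\bigr)\frac{dt}{t}$.

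\emph{Step 3 (Hölder in $q$).} Apply the classical Hölder inequality with exponents $q,q'$ on $(0,|\Omega|)$ with respect to the measure $dt/t$. This gives exactly $\|u\|_{L^{p,q}}\|v\|_{L^{p',q'}}$ by the definition \eqref{SN}. In the endpoint cases $q=\infty$ (so $q'=1$) or $q=1$ (so $q'=\infty$), pull out the supremum of one factor and leave the $L^1(dt/t)$ integral of the other.

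\emph{Main obstacle.} The only nontrivial step is Step 1, specifically the measure-theoretic identification that the layer-cake integral for $u^*v^*$ equals the integral of the minima. This requires $|\{u^*>s\}|=|\{|u|>s\}|$, which follows from the right-continuity of the distribution function in the definition of $u^*$. Once this equimeasurability is checked, the rest is routine.
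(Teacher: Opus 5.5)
Your proof is correct and is the standard argument: the Hardy--Littlewood rearrangement inequality, which your layer-cake proof establishes correctly via $\{u^*>s\}=(0,|\{|u|>s\}|)$, followed by H\"older's inequality in $L^q(dt/t)$, which gives constant exactly $1$ for the normalization \eqref{SN}. The paper gives no proof of its own and only cites O'Neil and the appendix of D'Ancona--Fanelli, where the result is obtained by essentially this rearrangement-plus-H\"older route; the one thing to fix is the formal display in Step 2, which treats $dt$ as a multiplicative factor, and which should simply be replaced by the integral identity you state right after it.
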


\begin{proposition}[Sobolev's inequality]
\label{SLp}
Let $1\leq p <N$ and $p^*=Np/(N-p)$. There exists a constant $\zeta_{N,p}>0$ such that the following inequality holds:
$$
\|u\|_{L^{ p^*,p}(\Omega)}\leq \zeta_{N,p}\|\nabla u\|_{L^p(\Omega)}\quad\hbox{ for all }\; u\in C^\infty_c(\Omega).
$$
Furthermore, the constant below is optimal 
$$
\zeta_{N,p}=\frac{p}{(N-p)|B_1(0)|^{1/N}}.
$$
\end{proposition}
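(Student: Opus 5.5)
The plan is to reduce the inequality to a one-dimensional statement through symmetrization, and then obtain it from a weighted Hardy inequality. I take $p>1$ first and handle $p=1$ separately at the end.

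\emph{Reduction to radial functions.} Let $u\in C^\infty_c(\Omega)$ and let $u^\star$ be its Schwarz symmetrization. The Lorentz quasinorm depends only on the distribution function, so $\|u\|_{L^{p^*,p}(\Omega)}=\|u^\star\|_{L^{p^*,p}}$. By the P\'olya--Szeg\H{o} principle, $\|\nabla u^\star\|_{L^p}\le\|\nabla u\|_{L^p}$. It is therefore enough to treat non-increasing radial $u$, that is, $u(x)=u^*(\omega_N|x|^N)$ with $\omega_N=|B_1(0)|$.

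\emph{Rewriting both sides in terms of $u^*$.} Set $t=\omega_N r^N$. A direct computation gives
$$|\nabla u|=N\omega_N^{1/N}t^{1-1/N}|(u^*)'(t)|.$$
This leads to
$$\|\nabla u\|_p^p=\int_0^{|\Omega|}\big(N\omega_N^{1/N}t^{1-1/N}|(u^*)'(t)|\big)^p\,dt.$$
On the other side, $\|u\|_{L^{p^*,p}}^p=\int_0^{|\Omega|}t^{p/p^*-1}u^*(t)^p\,dt$, and $p/p^*-1=-p/N$.

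\emph{Applying Hardy's inequality.} Since $u^*(t)=\int_t^{|\Omega|}|(u^*)'(s)|\,ds$, I will use the dual Hardy inequality with power weight:
$$\int_0^\infty t^{-p/N}\Big(\int_t^\infty g\Big)^p dt\le\Big(\frac{p}{1-p/N}\Big)^p\int_0^\infty t^{-p/N}(tg)^p\,dt.$$
This holds because $1-p/N>0$. Take $g=|(u^*)'|$, and note that $t^{-p/N}t^p=\big(t^{1-1/N}\big)^p$. The result is
$$\|u\|_{L^{p^*,p}}\le\frac{pN}{N-p}\cdot\frac{1}{N\omega_N^{1/N}}\|\nabla u\|_p=\frac{p}{(N-p)\omega_N^{1/N}}\|\nabla u\|_p.$$
This is exactly $\zeta_{N,p}$.

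\emph{Optimality.} This is the main delicate point. The extremals of Hardy's inequality, $g\sim t^{-1+1/N-1/p}$, correspond to $u^*(t)\sim t^{-1/p^*}$, which makes the right-hand side diverge logarithmically. I will therefore use truncated versions, $u^*(t)=\min(t^{-1/p^*},\varepsilon^{-1/p^*})$ cut off near $|\Omega|$ and shifted to vanish there, placed in a ball inside $\Omega$. Both sides then behave like $\log(1/\varepsilon)$, and the ratio tends to $\zeta_{N,p}$, as in the standard sharpness proof of Hardy's inequality.

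\emph{The case $p=1$.} Here no Hardy step is needed. The Lorentz norm becomes $\int_0^{|\Omega|} t^{-1/N}u^*(t)\,dt$, and the gradient norm becomes $\int N\omega_N^{1/N}t^{1-1/N}|(u^*)'|\,dt$. Integrating by parts, using Fubini and the identity $\int_0^s t^{-1/N}dt=\frac{N}{N-1}s^{1-1/N}$, gives equality between these two expressions up to the constant $\frac{1}{(N-1)\omega_N^{1/N}}$. Equivalently, this is the coarea formula combined with the isoperimetric inequality. Optimality then follows by approximating characteristic functions of balls.
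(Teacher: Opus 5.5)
Your proof is correct. It is the standard argument for Alvino's inequality: Schwarz symmetrization with P\'olya--Szeg\H{o}, then the one-dimensional Hardy inequality with weight $t^{-p/N}$, and sharpness via truncated Hardy extremals. The paper proves nothing here itself and simply cites Alvino, so this is also the route it relies on.

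Two cosmetic fixes remain.
\begin{itemize}
\item In the optimality step, cut off at $T=|B|$ for a ball $B\subset\subset\Omega$ rather than at $|\Omega|$, and mollify so the test functions lie in $C^\infty_c(\Omega)$.
\item For $p=1$, your Fubini identity already gives equality for every radially non-increasing bump, so no approximation of characteristic functions is needed.
\end{itemize}
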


The proof of the previous result can be found in \cite{Alvino1977}.

{Throughout the paper we also denote by $S_{N,p}$ the best constant in the classical Sobolev inequality, namely
\begin{equation}
\label{SobC}
\|u\|_{L^{p^*}(\Omega)}\leq S_{N,p}\|\nabla u\|_{L^p(\Omega)}\qquad\hbox{ for all }\; u\in W^{1,p}_0(\Omega),\quad 1\leq p<N.
\end{equation}
Let us recall that $p\mapsto S_{N,p}$ is continuous on $[1,N)$; in particular $S_{N,p}$ stays bounded as $p\to 1^+$, a fact that will be repeatedly used when passing to the limit in the estimates of Section {\ref{ApproxP}}.}

\subsection{The space $BV(\Omega)$}
 We say that $u\in L^1(\Omega)$ is a bounded variation function in $\Omega$ if the distributional derivative of $u$ is a finite Radon measure in $\Omega$, i.e., if 
$$
\int_{\Omega}u\frac{\partial\phi}{\partial x_i}\,dx=-\int_{\Omega}\phi\,dD_iu\quad\hbox{ for all }\; \varphi\in C^\infty_c(\Omega),\quad i=1,...,N
$$
for some $\mathbb{R}^N-$valued Radon measure $Du=(D_1u,...,D_Nu)$ in $\Omega$. We denote the space of such functions ($BV$ functions) by $BV(\Omega)$.

\begin{proposition}
Let $u\in BV(\Omega)$. Then, the total variation of the $\mathbb{R}^N-$valued measure $Du$ satisfies the following identity
\begin{equation}
\label{TV}
 \int_{\Omega}|Du|=\sup\left\{\int_{\Omega}u\mbox{div}\,\varphi\,dx\>:\>\varphi\in C^1_c(\Omega,\mathbb{R}^N),\quad|\varphi|\leq 1\right\}.
 \end{equation}
 Moreover, the map $BV(\Omega)\ni u\mapsto \int_{\Omega}|Du|$ is lower semicontinuous in $BV(\Omega)$ with respect to the $L^1_{loc}(\Omega)$ topology.
\end{proposition}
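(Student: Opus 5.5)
The statement to prove is the standard characterization of the total variation of a $BV$ function as a supremum against smooth compactly supported fields bounded by one, together with lower semicontinuity of $u\mapsto\int_\Omega|Du|$ under $L^1_{loc}(\Omega)$ convergence. The plan is to prove the identity \eqref{TV} as two inequalities, and then read off lower semicontinuity from the supremum representation.

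\emph{Step 1: the upper bound.} Take $\varphi\in C^1_c(\Omega,\mathbb{R}^N)$ with $|\varphi|\le 1$. Apply the defining integration by parts componentwise; the definition is stated for $C^\infty_c$ test functions, but it extends to $C^1_c$ by mollification, since the integrals converge when $\varphi_\epsilon\to\varphi$ uniformly with uniformly convergent derivatives and supports in a fixed compact set. This gives
$$\int_\Omega u\,\mbox{div}\,\varphi\,dx=-\int_\Omega \varphi\cdot dDu\le \int_\Omega|\varphi|\,d|Du|\le |Du|(\Omega).$$
Hence the supremum is at most $\int_\Omega|Du|$.

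\emph{Step 2: the reverse inequality.} This is the main obstacle. Use the polar decomposition $Du=\sigma\,|Du|$, where $\sigma$ is $|Du|$-measurable with $|\sigma|=1$ $|Du|$-a.e. (Radon–Nikodym). Fix $\epsilon>0$ and choose a compact $K\subset\Omega$ with $|Du|(\Omega\setminus K)<\epsilon$, which is possible since $|Du|$ is a finite Radon measure. By Lusin's theorem together with density of $C_c(\Omega)$ in $L^1(|Du|)$, there is $\psi\in C_c(\Omega,\mathbb{R}^N)$ with $\|\psi+\sigma\|_{L^1(|Du|)}<\epsilon$. Composing with the radial retraction onto the unit ball keeps $|\psi|\le1$ and does not increase this error, because $\sigma$ already lies in the unit ball. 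Then mollify $\psi$ to get $\varphi\in C^1_c(\Omega,\mathbb{R}^N)$ with $|\varphi|\le 1$ and $\varphi\to\psi$ uniformly; dominated convergence with respect to the finite measure $|Du|$ controls the change. This yields
$$\int_\Omega u\,\mbox{div}\,\varphi\,dx=-\int_\Omega\varphi\cdot\sigma\,d|Du|\ge |Du|(\Omega)-C\epsilon.$$
Letting $\epsilon\to0$ gives the identity.

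\emph{Step 3: lower semicontinuity.} Let $u_n\to u$ in $L^1_{loc}(\Omega)$. For each fixed admissible $\varphi$, the field $\mbox{div}\,\varphi$ is bounded and compactly supported, so
$$\int u_n\,\mbox{div}\,\varphi\,dx\to\int u\,\mbox{div}\,\varphi\,dx .$$
Each of these integrals is at most $\liminf_n\int_\Omega|Du_n|$. Taking the supremum over $\varphi$ and using Step 1–2 gives
$$\int_\Omega|Du|\le\liminf_n\int_\Omega|Du_n|.$$
This argument does not need $u\in BV$ a priori: the right-hand side of \eqref{TV} defines the variation for any $L^1_{loc}$ function, and if the $\liminf$ is finite, the Riesz representation theorem shows $Du$ is a finite measure.
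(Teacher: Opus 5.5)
Your proof is correct, and it is the standard argument. The paper does not prove this proposition itself; it quotes it from Ambrosio--Fusco--Pallara, where it is proved along the same lines as yours: duality for the upper bound, the representation $|\mu|(\Omega)=\sup\{\int_\Omega\varphi\cdot d\mu:\ \varphi\in C_c(\Omega,\mathbb{R}^N),\ |\varphi|\le 1\}$ of the total variation of a vector measure (which your polar-decomposition, density and retraction step reproves) together with uniform density of smooth fields for the lower bound, and lower semicontinuity because $u\mapsto\int_\Omega|Du|$ is a supremum of $L^1_{loc}$-continuous linear functionals.
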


The space $BV(\Omega)$ is a Banach space endowed with the norm
\begin{equation}
\label{N}
\|u\|_{BV(\Omega)}=\int_{\Omega}|Du|+\int_{\Omega}|u|\,dx.
\end{equation}

 We say that $u\in BV_{loc}(\Omega)$ if for each open set $\omega \subset\subset \Omega$, 
$$
u\in BV(\omega).
$$

In order to state the embedding theorem in $BV(\Omega)$, we recall the following result.

\begin{theorem}[Embedding theorem]
Let $\Omega$ be an open subset in $\mathbb{R}^N$ with bounded Lipschitz boundary. Then, the embedding $BV(\Omega)\hookrightarrow L^{1^*}(\Omega)$ is continuous and the embeddings $BV(\Omega)\hookrightarrow L^q(\Omega)$ are compact for $1\leq q <1^*$, $1^*=N/(N-1)$.
\end{theorem}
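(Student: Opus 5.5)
The embedding theorem for $BV(\Omega)$ is classical, so the plan is to reduce it to the Sobolev space $W^{1,1}$ by density and then pass to $BV$ through lower semicontinuity and a compactness argument.

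\textbf{Continuity.} I would first prove the Gagliardo--Nirenberg inequality $\|v\|_{L^{1^*}(\mathbb{R}^N)}\le C\|\nabla v\|_{L^1(\mathbb{R}^N)}$ for $v\in C^1_c(\mathbb{R}^N)$. The argument writes $|v(x)|\le\int_{\mathbb{R}}|\partial_i v|\,dx_i$ for each coordinate $i$, multiplies the $N$ resulting bounds raised to the power $1/(N-1)$, and integrates one variable at a time using the generalized H\"older inequality.

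Since $\partial\Omega$ is bounded and Lipschitz, I would then use a bounded extension operator $E\colon BV(\Omega)\to BV(\mathbb{R}^N)$ with compact support and $\|Eu\|_{BV(\mathbb{R}^N)}\le C\|u\|_{BV(\Omega)}$. This is built by flattening the boundary locally with bi-Lipschitz charts, reflecting across the flattened boundary, and gluing with a partition of unity. The trace theory guarantees that no jump mass is created on $\partial\Omega$ beyond $C\int_{\partial\Omega}|u|\,d\mathcal H^{N-1}\le C\|u\|_{BV}$.

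Next I would mollify, setting $v_\epsilon=\rho_\epsilon*Eu$. Then $v_\epsilon\to Eu$ in $L^1$, and $\int|\nabla v_\epsilon|\le \int|D(Eu)|$. Applying the $C^1_c$ inequality to $v_\epsilon$ and invoking Fatou's lemma along an a.e.\ convergent subsequence gives $\|u\|_{L^{1^*}(\Omega)}\le C\|u\|_{BV(\Omega)}$.

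\textbf{Compactness.} Let $(u_n)$ be bounded in $BV(\Omega)$. The extensions $Eu_n$ are bounded in $BV(\mathbb{R}^N)$ and supported in a fixed ball $B$. From the translation estimate
$$\int|w(x+h)-w(x)|\,dx\le |h|\int|Dw|,$$
first for smooth $w$ and then by mollification for $BV$ functions, the family is equicontinuous in $L^1$. Together with the uniform support, the Fr\'echet--Kolmogorov theorem yields a subsequence converging in $L^1(\Omega)$.

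For $1<q<1^*$, interpolation gives
$$\|u_n-u_m\|_q\le\|u_n-u_m\|_1^{\theta}\,\|u_n-u_m\|_{1^*}^{1-\theta},$$
with $\theta\in(0,1]$ determined by $\frac1q=\theta+\frac{1-\theta}{1^*}$. The $L^{1^*}$ factor is bounded by the continuity step, so the subsequence is Cauchy in $L^q$.

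\textbf{Main obstacle.} The delicate step is the extension across a merely Lipschitz boundary. One must check that the reflected function's derivative carries no singular mass on $\partial\Omega$, which requires the trace theorem for $BV$ on Lipschitz domains, with the trace in $L^1(\partial\Omega)$ controlled by the $BV$ norm. I would take the existence of this trace as the known input and quote the standard construction from Ambrosio--Fusco--Pallara or Evans--Gariepy.
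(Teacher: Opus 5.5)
The paper gives no proof of this result; it recalls it as classical and points to Ambrosio--Fusco--Pallara. Your outline (reflection extension, the Gagliardo--Nirenberg inequality combined with mollification and Fatou, Fr\'echet--Kolmogorov compactness in $L^1$, then interpolation between $L^1$ and $L^{1^*}$) is the standard textbook argument behind that citation, and it is correct. One caveat: the compactly supported extension, and hence the compactness claim, requires $\Omega$ itself to be bounded, which is the paper's standing assumption. For an exterior domain with bounded Lipschitz boundary, translates of a fixed bump escaping to infinity show that $BV(\Omega)\hookrightarrow L^q(\Omega)$ is not compact.
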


An easy extension of Proposition \ref{SLp}  is the following (see \cite{pe,Alvino1977,ct}): 
\begin{proposition}[Sobolev's inequality in $BV$]
\label{LSI}
There exists a constant $\zeta>0$ such that the following inequality holds for all $u\in BV(\Omega)$:
\begin{equation}
\|u\|_{L^{1^*, 1}(\mathbb{R}^N)}\leq \zeta \int_{\mathbb{R}^N}|Du|.
\end{equation}
Furthermore, the better constant is given by
\begin{equation}
\label{bestl}
\zeta_N = \frac{1}{(N-1)|B_1(0)|^\frac{1}{N}}.
\end{equation}
\end{proposition}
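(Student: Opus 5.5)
The plan is to derive the inequality from Proposition \ref{SLp} by letting $p\to1^+$, using the approximation of $BV$ functions by smooth ones with convergent total variation.

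First I deal with smooth compactly supported functions. For $u\in C^\infty_c(\mathbb{R}^N)$ and $p\in(1,N)$, Proposition \ref{SLp} gives
\[
\|u\|_{L^{p^*,p}}\le \zeta_{N,p}\|\nabla u\|_{L^p}.
\]
As $p\to1^+$ we have $\zeta_{N,p}\to \zeta_N$ and, since $u$ has compact support, $\|\nabla u\|_{L^p}\to\|\nabla u\|_{L^1}$. For the left-hand side,
\[
\|u\|_{L^{p^*,p}}^p=\int_0^{\infty}\bigl(t^{1/p^*}u^*(t)\bigr)^p\,\frac{dt}{t}.
\]
Here $u^*$ is bounded and supported in $(0,|\operatorname{supp}u|)$, and $1/p^*\to 1/1^*$. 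So Fatou's lemma, or even dominated convergence, yields
\[
\int_0^\infty t^{1/1^*}u^*(t)\,\frac{dt}{t}\le\liminf_{p\to1^+}\|u\|^p_{L^{p^*,p}},
\]
and hence $\|u\|_{L^{1^*,1}}\le\zeta_N\int|\nabla u|$. The direct route is the classical one: the coarea formula together with the isoperimetric inequality $P(E)\ge N|B_1|^{1/N}|E|^{(N-1)/N}$, applied to the level sets $\{|u|>s\}$, and the layer-cake expression of the $L^{1^*,1}$ norm as an integral over $s$ of $|\{|u|>s\}|^{1/1^*}$ times a constant. Either route works; I would present the isoperimetric one as the cleanest check of the constant. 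It gives
\[
\int_0^\infty |\{|u|>s\}|^{(N-1)/N}\,ds\le \frac{1}{N|B_1|^{1/N}}\int|Du|,
\]
and the $L^{1^*,1}$ norm equals $N'=1^*$ times the left-hand side, which recovers the constant \eqref{bestl}.

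Next I extend to $u\in BV$. I interpret $u$ as extended by zero to $\mathbb{R}^N$, which is the natural setting since the right-hand side is over $\mathbb{R}^N$, i.e.\ $u\in BV(\mathbb{R}^N)$ with compact support. I take mollifications $u_\epsilon=u*\rho_\epsilon\in C^\infty_c$. These satisfy $u_\epsilon\to u$ in $L^1$ and $\int|\nabla u_\epsilon|\to\int|Du|$; in fact $\int|\nabla u_\epsilon|\le\int|Du|$ suffices. Along a subsequence $u_\epsilon\to u$ a.e., so the distribution functions converge at continuity points, and $u_\epsilon^*\to u^*$ a.e. Fatou's lemma then gives $\|u\|_{L^{1^*,1}}\le\liminf\|u_\epsilon\|_{L^{1^*,1}}\le\zeta_N\int|Du|$.

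Optimality of $\zeta_N$ follows by testing with $u=\chi_{B_r}$. In that case $\|u\|_{L^{1^*,1}}=1^*|B_r|^{(N-1)/N}$ and $\int|Du|=N|B_1|r^{N-1}$, so the ratio is exactly $\zeta_N$ and equality holds.

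The main obstacle is the lower semicontinuity of the Lorentz norm under a.e.\ convergence, that is, the convergence of the rearrangements. I handle it through the level-set formulation: $|\{|u|>s\}|\le\liminf|\{|u_\epsilon|>s\}|$ for every $s$, which is Fatou applied to indicator functions. Integrating in $s$ and applying Fatou again avoids rearrangements entirely.
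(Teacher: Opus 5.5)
Your proof is correct. The paper gives no argument of its own: it only calls the statement ``an easy extension of Proposition~\ref{SLp}'' and refers to the literature. Your first route is a complete version of exactly that remark. You pass to the limit $p\to1^+$ in Proposition~\ref{SLp} for $u\in C^\infty_c(\mathbb{R}^N)$, using $\zeta_{N,p}\to\zeta_N$ and dominated convergence in $\int_0^\infty t^{-p/N}u^*(t)^p\,dt$. You then mollify and use the level-set Fatou argument for the lower semicontinuity of the $L^{1^*,1}$ norm.

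The coarea/isoperimetric route you prefer is genuinely different, and it gives more:
\begin{itemize}
\item It does not rely on Proposition~\ref{SLp}, whose sharp constant is simply imported from Alvino's work.
\item It explains the value of $\zeta_N$ through the identity $\|u\|_{L^{1^*,1}}=1^*\int_0^\infty|\{|u|>s\}|^{(N-1)/N}\,ds$ combined with $P(E)\geq N|B_1(0)|^{1/N}|E|^{(N-1)/N}$.
\item It makes sharpness transparent, in fact attainment by characteristic functions of balls, since both the coarea step and the isoperimetric step are equalities there.
\item It needs no mollification step at all, because the coarea formula and the isoperimetric inequality hold directly for functions in $BV(\mathbb{R}^N)$ and for sets of finite perimeter.
\end{itemize}
What the limiting route offers instead is that it exhibits the $BV$ inequality as the endpoint of the sharp family for $p>1$, with the constant recovered by continuity in $p$.

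Two small points are worth stating explicitly. First, for $u\in BV(\Omega)$ with $\Omega$ Lipschitz, the trace theorem shows that the zero extension lies in $BV(\mathbb{R}^N)$ with $\int_{\mathbb{R}^N}|Du|=\int_\Omega|Du|+\int_{\partial\Omega}|u|\,d\mathcal{H}^{N-1}$. This is the form in which the inequality is used later, e.g.\ in Theorem~\ref{uB}. Second, the extremal ball should be taken with $B_r(x_0)\subset\subset\Omega$, so that it is admissible in $BV(\Omega)$.
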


\subsection{Fine properties   of $BV$ functions }

In order to show some fine properties of $BV$ functions, we define approximate limits. 

\begin{definition}
Let $u\in L^1_{loc}(\Omega)$. We say that $u$ has a approximate limit at $x\in\Omega$ if there exists $\tilde{u}(x)\in \mathbb{R}$ such that
\begin{equation}\label{limit}
\lim_{\rho\downarrow0} \frac{1}{|B_{\rho}(x)|}\int_{B_{\rho}(x)}|u(y)-\tilde{u}(x)|\,dx=0,
\end{equation}
\end{definition}

We denote by $S_u$ the set of points where the limit in \eqref{limit} does  not exist; this called the singular set ot the  approximate discontinuity set of $u$.
We also consider  among the approximate discontinuity points those corresponding to an approximate jump discontinuity between two values $u^+(x)$ and $u^-(x)$ along a direction $\nu$.  More precisely:

\begin{definition}
Let $u\in L^1_{loc}(\Omega)$. We say that $x\in S_u$ is an approximate jump point of $u$ if there exists $u^+(x),\;u^-(x)\in \mathbb{R}$ and $\nu \in S^{N-1}$ such that 
\begin{equation}
\lim_{\rho\downarrow0} \frac{1}{|B^+_{\rho}(x,\nu)|}\int_{B_{\rho}^+(x,\nu)}|u(y)-u^+(x)|\,dx=0,\qquad \lim_{\rho\downarrow0} \frac{1}{|B^-_{\rho}(x,\nu)|}\int_{B_{\rho}^-(x,\nu)}|u(y)-u^-(x)|\,dx=0,
\end{equation}
where
$$
B^+_\rho(x,\nu)=\{y\in B_\rho(x)\colon \langle y-x,\nu\rangle>0\}, \qquad B^-_\rho(x,\nu)=\{y\in B_\rho(x)\colon \langle y-x, \nu\rangle<0\}.
$$
We denote by $J_u$ the set of approximate jump points.
\end{definition}
We define the precise representative $u^*\colon \Omega\setminus(S_u\setminus J_u)\to \mathbb{R}$ of $u$ by 
\begin{equation}
u^*(x)=
\left\{
\begin{array}{lcr}
\tilde{u}(x)&\hbox{if}&x\in \Omega\setminus S_u,\\
\frac{u^++u^-}{2}&\hbox{if}& x\in J_u.
\end{array}
\right.
\end{equation}

Now, let us define the standard mollifiers $\rho_\epsilon\colon \mathbb{R}^N\to \mathbb{R}$ by
\begin{equation}
\label{mll}
\rho_{\epsilon}(x)=\left\{
\begin{array}{lc}
a\epsilon^{-N}\exp{(-\epsilon^2/(\epsilon^2-|x|^2)}&\hbox{ if }\; |x|<\epsilon\\
0&\hbox{ if }\; |x|\geq \epsilon,
\end{array}
\right.
\end{equation}
where 
$$
a^{-1}=\int_{\{|x|\leq 1\}}\exp{(-1/(1-|x|^2))}\,dx.
$$

We have the following: 
\begin{proposition}
Let $u\in BV(\Omega)$ and let $(\rho_\epsilon)_{\epsilon>0}$ be the mollifiers defined as in \eqref{mll}. Then
$$
u\ast \rho_\epsilon(x)\to u^*(x)\quad \hbox{ for all }\; x\in \Omega\setminus(S_u\setminus J_u)\quad\hbox{ as }\; \epsilon\to 0.
$$
\end{proposition}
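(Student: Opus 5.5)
We have to prove the last proposition: for $u\in BV(\Omega)$, the mollifications $u\ast\rho_\epsilon(x)$ converge to the precise representative $u^*(x)$ at every $x\in\Omega\setminus(S_u\setminus J_u)$. The plan is a direct pointwise estimate, treating the two kinds of points separately and using only that the kernel is bounded and supported in $B_\epsilon(0)$. Fix such an $x$ and take $\epsilon<\mathrm{dist}(x,\partial\Omega)$, so that $u\ast\rho_\epsilon(x)=\int_{B_\epsilon(x)}\rho_\epsilon(x-y)u(y)\,dy$ is well defined. The facts we use are
\[
\int\rho_\epsilon=1,\qquad 0\le\rho_\epsilon\le a\epsilon^{-N},\qquad \rho_\epsilon \text{ radial}.
\]

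\textbf{Approximate continuity points.} Let $x\in\Omega\setminus S_u$. Since $\rho_\epsilon$ integrates to one,
\[
|u\ast\rho_\epsilon(x)-\tilde u(x)|\le\int_{B_\epsilon(x)}\rho_\epsilon(x-y)|u(y)-\tilde u(x)|\,dy\le a\,|B_1(0)|\,\frac{1}{|B_\epsilon(x)|}\int_{B_\epsilon(x)}|u(y)-\tilde u(x)|\,dy .
\]
By \eqref{limit} the right-hand side tends to $0$ as $\epsilon\to0$.

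\textbf{Jump points.} Let $x\in J_u$ with data $u^\pm(x)$ and $\nu$. Split $B_\epsilon(x)=B^+_\epsilon(x,\nu)\cup B^-_\epsilon(x,\nu)$ up to the null hyperplane. Because $\rho_\epsilon$ is radial, hence even, the integral of $\rho_\epsilon(x-\cdot)$ over each half-ball equals $1/2$. Therefore
\[
u\ast\rho_\epsilon(x)-\frac{u^+(x)+u^-(x)}{2}=\int_{B^+_\epsilon}\rho_\epsilon(x-y)\big(u(y)-u^+(x)\big)\,dy+\int_{B^-_\epsilon}\rho_\epsilon(x-y)\big(u(y)-u^-(x)\big)\,dy .
\]
Bound each term as in the first case, using $\rho_\epsilon\le a\epsilon^{-N}$ and $|B^\pm_\epsilon|=|B_\epsilon|/2$. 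This gives a constant multiple of the half-ball averages, which vanish as $\epsilon\to0$ by the definition of approximate jump point. Hence $u\ast\rho_\epsilon(x)\to u^*(x)$.

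The only point needing care is the symmetry step. It is precisely the even symmetry of $\rho_\epsilon$ that makes the weights of the two sides equal to exactly $1/2$, and so produces the average $\frac{u^++u^-}{2}$. With a non-symmetric kernel the limit would be a different convex combination of $u^+(x)$ and $u^-(x)$. The argument does not use that $u\in BV(\Omega)$; it holds for any $u\in L^1_{loc}(\Omega)$.
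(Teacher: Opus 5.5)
Your proof is correct and is the standard argument: the paper gives no proof of this proposition and quotes it as a known fact about fine properties of $BV$ functions (see Ambrosio--Fusco--Pallara), whose usual proof is your direct estimate---$\rho_\epsilon\le a\epsilon^{-N}$ reduces everything to ball and half-ball averages, and the evenness of $\rho_\epsilon$ gives each half-ball weight exactly $1/2$ at jump points. Your closing remark is also accurate: only $u\in L^1_{\mathrm{loc}}(\Omega)$ is used, not $u\in BV(\Omega)$.
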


\begin{proposition}[Properties of trace operator]
\label{BT}
Let $\Omega\subset \mathbb{R}^N$ be an open set with bounded Lipschitz boundary $\partial\Omega$. For each $u\in BV(\Omega)$ there exists $\tau(u)\in L^1(\partial\Omega)$ such that
$$
\lim_{\rho\downarrow 0}\rho^{-N}\int_{\Omega\cap B_\rho(x)}|u(y)-\tau(u(x))|\,dy=0\quad\mathcal{H}^{N-1}-\hbox{a.e.} \;x\in \partial\Omega.
$$
Moreover, $\tau\colon BV(\Omega)\to L^1(\partial\Omega)$ is a bounded linear operator that satisfies
\begin{equation}
\label{TrIn}
 \int_{\Omega}|u|\,dx\leq N\left(\frac{|\Omega|}{|B_1(0)|}\right)^{1/N}\left(\int_{\Omega}|Du|+\int_{\partial\Omega}|\tau(u)|\,d\mathcal{H}^{N-1}\right)\quad\hbox{ for all }\; u\in BV(\Omega)
\end{equation}
and it is also continuous with respect to the topology induced by strict convergence, i.e., $(u_n)_{n\geq1}$ strictly converges in $BV(\Omega)$ to $u$ if 
$$
u_n\to u\quad\hbox{ in }\;L^1(\Omega)\quad\hbox{ and }\quad\int_{\Omega}|Du_n|\to\int_{\Omega}|Du|,
$$
as $n\to\infty$.
\end{proposition}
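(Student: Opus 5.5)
The plan is to derive Proposition \ref{BT} from the standard theory of traces of $BV$ functions on Lipschitz domains, as in Evans--Gariepy or Ambrosio--Fusco--Pallara; the only part that needs a real argument is the explicit constant in \eqref{TrIn}.

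\emph{Existence of the trace and the Lebesgue-point property.} Since $\partial\Omega$ is bounded and Lipschitz, we cover it by finitely many cylinders in which $\Omega$ is the supergraph of a Lipschitz function, and use a partition of unity. In each chart, flattening the boundary reduces the problem to a half-space. There the trace is defined as the limit of the averages of $u$ over thin slabs. The estimate $\int |u(x',t)-u(x',s)|\,dx' \le |Du|(\text{slab between } s \text{ and } t)$ shows that these averages form a Cauchy family in $L^1$, so they converge to some $\tau(u)\in L^1(\partial\Omega)$. The same estimate yields $\|\tau(u)\|_{L^1(\partial\Omega)}\le C\|u\|_{BV(\Omega)}$, which gives linearity and boundedness of $\tau$. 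Next we prove the Gauss--Green formula
\[
\int_\Omega u\,\mathrm{div}\varphi\,dx=-\int_\Omega\varphi\cdot dDu+\int_{\partial\Omega}(\varphi\cdot\nu)\,\tau(u)\,d\mathcal H^{N-1}\quad\text{for all }\varphi\in C^1(\mathbb R^N;\mathbb R^N).
\]
Applying it to $|u-\tau(u)(x)|$ localized in $B_\rho(x)$, together with the fact that $\rho^{1-N}|Du|(B_\rho(x)\cap\Omega)\to 0$ for $\mathcal H^{N-1}$-a.e.\ $x\in\partial\Omega$ (a density estimate for the finite measure $|Du|$), gives the Lebesgue-point property.

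\emph{Continuity under strict convergence.} Let $u_n\to u$ strictly. Using the slab estimate, we bound $\|\tau(u_n)-\tau(u)\|_{L^1}$ by $\varepsilon^{-1}\int_{\Omega\setminus\Omega_\varepsilon}|u_n-u|\,dx+|Du_n|(\Omega\setminus\Omega_\varepsilon)+|Du|(\Omega\setminus\Omega_\varepsilon)$. The first term tends to $0$ by $L^1$ convergence. For the second, strict convergence together with lower semicontinuity on the open set $\Omega_\varepsilon$ gives $\limsup_n |Du_n|(\Omega\setminus\Omega_\varepsilon)\le|Du|(\Omega\setminus\Omega_{\varepsilon})$, which is small for small $\varepsilon$ (taking $\varepsilon$ outside the countable set where $|Du|(\partial\Omega_\varepsilon)>0$).

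\emph{The inequality \eqref{TrIn}.} Extend $u$ by zero to $\bar u$ on $\mathbb R^N$. The Gauss--Green formula gives $\bar u\in BV(\mathbb R^N)$ with $|D\bar u|(\mathbb R^N)=\int_\Omega|Du|+\int_{\partial\Omega}|\tau(u)|\,d\mathcal H^{N-1}$. The isoperimetric Sobolev inequality (Proposition \ref{LSI}) with $\zeta_N$ from \eqref{bestl} yields $\|\bar u\|_{L^{1^*}}\le \zeta_N\,|D\bar u|(\mathbb R^N)$. Hölder's inequality then gives $\int_\Omega|u|\le |\Omega|^{1/N}\|u\|_{L^{1^*}}\le \frac{|\Omega|^{1/N}}{(N-1)|B_1|^{1/N}}|D\bar u|(\mathbb R^N)$. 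Since $\frac1{N-1}\le N$, this is at most the right-hand side of \eqref{TrIn}. The constant $N$ stated in the proposition is therefore not sharp, so this route suffices as long as the extension identity holds.

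The main obstacle is the local construction of the trace and the Gauss--Green formula on Lipschitz (not smooth) domains. The flattening is only bi-Lipschitz, so we must check that $|Du|$ transforms with bounded distortion, and that the finitely many charts can be patched together while keeping the slab estimates uniform. I would handle this by working directly with the Lipschitz graphs and vertical slabs rather than flattening the boundary.
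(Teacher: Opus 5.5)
Your proposal is correct. The paper gives no argument of its own here. It refers to Ambrosio--Fusco--Pallara (Theorems 3.87--3.88) for the existence of $\tau(u)$, the Lebesgue-point property and continuity under strict convergence, and to Miranda for \eqref{TrIn}.

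Your first and third steps outline exactly those standard proofs. The first uses the slab estimate, the Gauss--Green formula, and the vanishing $(N-1)$-density of $|Du|$ along $\partial\Omega$. The third uses $\limsup_n|Du_n|(\Omega\setminus\Omega_\varepsilon)\le|Du|(\Omega\setminus\Omega_\varepsilon)$, which follows from strict convergence plus lower semicontinuity on the open set $\Omega_\varepsilon$.

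The genuinely different part is \eqref{TrIn}. Instead of quoting it, you derive it from three ingredients: the zero-extension identity $|D\bar u|(\mathbb{R}^N)=\int_\Omega|Du|+\int_{\partial\Omega}|\tau(u)|\,d\mathcal H^{N-1}$, the Sobolev inequality, and H\"older. This yields the constant $\frac{1}{N-1}\bigl(|\Omega|/|B_1(0)|\bigr)^{1/N}$, or even $\frac1N\bigl(|\Omega|/|B_1(0)|\bigr)^{1/N}$ with the sharp isoperimetric constant. Both are much better than the stated $N\bigl(|\Omega|/|B_1(0)|\bigr)^{1/N}$. Your route makes the statement self-contained and shows its constant is crude. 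The price is reliance on the extension formula, which is itself a consequence of the Gauss--Green formula you must establish first.

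Two small points to tighten:
\begin{itemize}
\item Proposition \ref{LSI} controls the $L^{1^*,1}$ norm, not the $L^{1^*}$ norm. You should note that $\|v\|_{L^{1^*}}\le\|v\|_{L^{1^*,1}}$ in the normalization \eqref{SN}; in general $\|v\|_{L^{r}}\le r^{-(r-1)/r}\|v\|_{L^{r,1}}$. Alternatively, use the classical $L^{1^*}$ Sobolev inequality directly.
\item The Lebesgue-point step needs more than the density estimate for $|Du|$. You also need $x$ to be an $\mathcal H^{N-1}$-Lebesgue point of $\tau(u)$ on $\partial\Omega$, and a Sobolev inequality on $B_\rho(x)\cap\Omega$ (the Evans--Gariepy differential-inequality argument). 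Your sketch leaves both implicit.
\end{itemize}
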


The proof of previous proposition can be found in \cite[Theorems 3.87, 3.88]{AmbrosioFuscoPallara} and \cite[Proposition 2]{Miranda1974}. From now on we simply denote $u\in L^1(\partial\Omega)$ instead of $\tau(u)$.

\begin{remark}
Clearly, the norm $\|\cdot\|_{BV(\Omega)}$ is equivalent to the following
$$
\|u\|:=\int_{\Omega}|Du|+\int_{\partial\Omega}|u|\,d\mathcal{H}^{N-1},
$$
by previous proposition.
 \end{remark}

More details about space $BV(\Omega)$ can be found in \cite{AmbrosioFuscoPallara} (see also \cite{AttouchButtazzoMichaille2006, EvansGariepy1992}) from which we mainly inherit our notation.

\subsection{An extension of Anzellotti's theory} 

In this subsection, we recall an extension of Anzellotti's theory \cite{Anzellotti1983}, given in \cite{Caselles2011} and \cite{DeCiccoGiachettiSegura2019}{; we refer to \cite{CrastaDeCicco} for a systematic treatment of the pairing theory}.

Let 
$$
X_{\mathcal{M}}(\Omega)=\{{\bf z}\in L^{\infty}(\Omega,\mathbb{R}^N)\>:\> \mbox{div}\,{\bf z} \; \mbox{ is a finite Radon measure in }\;\Omega\}.
$$

\begin{definition}
\label{zDu}
Let ${\bf z} \in X_{\mathcal{M}}(\Omega)$ and $u\in BV(\Omega)$ be such that $ u^*\in L^1(\Omega, \hbox{div}\,{\bf z})$. We define the linear functional $({\bf z}, Du)\>:\>C^\infty_c(\Omega)\to \mathbb{R}$ by 
$$
\langle ({\bf z}, Du),\varphi\rangle=-\int_{\Omega}u^*\varphi\,\mbox{div}\,{\bf z}-\int_{\Omega}u {\bf z}\cdot \nabla \varphi\,dx\quad\hbox{ for all }\;\varphi\in C^\infty_c(\Omega).
$$
\end{definition}

\begin{proposition}
\label{Dist}
Let ${\bf z} \in X_{\mathcal{M}}(\Omega)$ and $u\in BV(\Omega)$ be such that $ u^*\in L^1(\Omega, \hbox{div}\,{\bf z})$ and let $A\subset \Omega$ be a open set. For all functions $\varphi\in C_c(A)$ the following inequality holds                                                                                                                                                                                                                                                                                                                                                                                                                
$$
|\langle ({\bf z}, Du), \varphi\rangle |\leq \sup_{A}|\varphi|\|{\bf z}\|_{L^\infty(A)}\int_{A}|Du|,
$$
which means that $({\bf z}, Du)$ is a finite Radon measure on $\Omega$. Furthermore, $({\bf z}, Du)$, $|({\bf z}, Du)|$ are absolutely continuous with respect to the measure $|Du|$ in $\Omega$ and 
$$
\left|\int_{B}({\bf z}, Du)\right|\leq \int_{B} |({\bf z}, Du)|\leq \|{\bf z}\|_{L^\infty(A)}\int_{B}|Du|,
$$
for all Borel sets $B$ and for all opens sets $A$ such that $B\subset A\subset \Omega$.
\end{proposition}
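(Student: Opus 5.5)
Since $u^*\in L^1(\Omega,\hbox{div}\,{\bf z})$ and $u\in BV(\Omega)\subset L^1(\Omega)$, the right-hand side of the definition of $\langle({\bf z},Du),\varphi\rangle$ is finite. The task is therefore to show that this distribution is controlled by $\sup|\varphi|\,\|{\bf z}\|_\infty\int_A|Du|$. I will argue by mollification: first take $u$ smooth, then pass to the limit using the pointwise convergence of mollifiers to the precise representative $u^*$, as stated in the preceding proposition.

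Fix $\varphi\in C^\infty_c(A)$ and an open $A'$ with $\hbox{supp}\,\varphi\subset A'\subset\subset A$. Set $u_\epsilon=u\ast\rho_\epsilon$, which is well defined and smooth on $A'$ for $\epsilon$ small. For smooth $u_\epsilon$, the distributional definition of $\hbox{div}\,{\bf z}$ tested against $u_\epsilon\varphi\in C^1_c$ gives
\[
-\int_\Omega u_\epsilon\varphi\,\hbox{div}\,{\bf z}-\int_\Omega u_\epsilon{\bf z}\cdot\nabla\varphi\,dx=\int_\Omega \varphi\,{\bf z}\cdot\nabla u_\epsilon\,dx .
\]
The last integral is bounded by $\sup|\varphi|\,\|{\bf z}\|_{L^\infty(A)}\int_{A'}|\nabla u_\epsilon|$. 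Standard mollification estimates give $\int_{A'}|\nabla u_\epsilon|\le\int_{A'_\epsilon}|Du|\le\int_A|Du|$, where $A'_\epsilon$ is the $\epsilon$-neighbourhood of $A'$.

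Now let $\epsilon\to0$. The term $\int u_\epsilon{\bf z}\cdot\nabla\varphi$ converges to $\int u{\bf z}\cdot\nabla\varphi$ because $u_\epsilon\to u$ in $L^1(A')$. \textbf{The main obstacle} is the measure term $\int u_\epsilon\varphi\,\hbox{div}\,{\bf z}$. Here I need two facts:
\begin{itemize}
\item pointwise convergence $u_\epsilon\to u^*$ at $|\hbox{div}\,{\bf z}|$-almost every point;
\item a dominating function in $L^1(|\hbox{div}\,{\bf z}|)$.
\end{itemize}
For the first, recall that $|\hbox{div}\,{\bf z}|$ vanishes on sets of zero $\mathcal H^{N-1}$ measure, since ${\bf z}\in L^\infty$ (the known absolute continuity $|\hbox{div}\,{\bf z}|\ll\mathcal H^{N-1}$). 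Also, $\mathcal H^{N-1}(S_u\setminus J_u)=0$ for $u\in BV$. So the preceding proposition yields the convergence $|\hbox{div}\,{\bf z}|$-a.e. For the second, I would first reduce to $u\in L^\infty$ by replacing $u$ with the truncation $T_k(u)$. Then $|u_\epsilon|\le k$, and dominated convergence applies against the finite measure $\varphi\,\hbox{div}\,{\bf z}$. Finally let $k\to\infty$:
\begin{itemize}
\item $T_k(u)^*=T_k(u^*)$, so the divergence term converges by dominated convergence with dominant $|u^*|\in L^1(\hbox{div}\,{\bf z})$;
\item $T_k(u)\to u$ in $L^1$;
\item $\int_A|DT_k(u)|\le\int_A|Du|$.
\end{itemize}
This gives the bound for $\varphi\in C^\infty_c(A)$.

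By density in the sup norm, the bound extends to $\varphi\in C_c(A)$, so $({\bf z},Du)$ is a Radon measure with $|({\bf z},Du)|(A)\le\|{\bf z}\|_{L^\infty(A)}|Du|(A)$ for every open $A$. Absolute continuity with respect to $|Du|$ and the estimate on Borel sets $B\subset A$ then follow from outer regularity of the Radon measure $|Du|$: approximate $B$ by open sets $U$ with $B\subset U\subset A$ and $|Du|(U\setminus B)$ small, then apply the open-set estimate on $U$, using $\|{\bf z}\|_{L^\infty(U)}\le\|{\bf z}\|_{L^\infty(A)}$.
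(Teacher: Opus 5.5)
Your proposal is correct and follows essentially the paper's route: mollify the truncations $T_k(u)$, use $|\hbox{div}\,{\bf z}|\ll\mathcal H^{N-1}$ and $\mathcal H^{N-1}(S_u\setminus J_u)=0$ to get convergence to the precise representative $|\hbox{div}\,{\bf z}|$-a.e., bound the smooth pairing by $\sup|\varphi|\,\|{\bf z}\|_{L^\infty(A)}|DT_k(u)|(A)$, then let $\epsilon\to0$, $k\to\infty$ and extend by density (your outer-regularity argument for Borel sets fills in a step the paper leaves implicit). One small slip: $T_k(u)^*=T_k(u^*)$ is false on $J_u$ (take $u^+=10$, $u^-=0$, $k=1$), but since $T_k$ is odd, nondecreasing and $1$-Lipschitz you still have $|T_k(u)^*|=\tfrac12|T_k(u^+)+T_k(u^-)|\le|u^*|$ there and $T_k(u)^*\to u^*$, so your dominated convergence step goes through unchanged.
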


\dem 
Let ${\bf z}\in X_{\mathcal{M}}(\Omega)$ and $u\in BV(\Omega)$ be such that $u^*\in L^1(\Omega, \hbox{div}\,{\bf z})$. Let $(\rho_n)$ be the mollifiers  defined as in \eqref{mll} with $\epsilon=1/n$  and $u_n^{(k)}=T_k(u)\ast \rho_n$, where $T_k$ is the truncation function defined in \eqref{Tk}.

Since, by  \cite[Corollary 3.80]{AmbrosioFuscoPallara},
$$
u_n^{(k)}\to (T_k(u))^*\quad\mathcal{H}^{N-1}-\hbox{ a.e. in }\;\Omega
$$
and, by \cite[Proposition 3.1]{ChenFried1999}, 
$$
u_n^{(k)}\to (T_k(u))^*\quad\hbox{div}\,{\bf z}-\hbox{ a.e. in }\;\Omega\quad\hbox{ as }\; n\to \infty.
$$

Let $\varphi\in C^1_c(A)$ with $\hbox{supp}\,\varphi\subset V\subset\subset A\subset \Omega$ and $\int_{\partial V}|Du|=0$. Then  

\begin{align}
\langle ({\bf z}, Du_n^{(k)}), \varphi\rangle &=\int_{V}\varphi {\bf z}\cdot \nabla u_n^{(k)}\,dx\\
&\leq \sup_{A}|\varphi|\|{\bf z}\|_{L^\infty(A,\mathbb{R}^N)}\int_{\Omega}|\nabla u_n^{(k)}|\,dx\\
&\leq \sup_{A}|\varphi|\|{\bf z}\|_{L^\infty(A,\mathbb{R}^N)}\int_{\Omega}|DT_k(u)\ast \rho_n|\,dx\\
&\leq \sup_{A}|\varphi|\|{\bf z}\|_{L^\infty(A, \mathbb{R}^N)}|DT_k(u)|(I_n(V)), 
\end{align}
where $I_n(V)=\{x\in A\colon \hbox{dist}\,(x, V)<1/n$\}. Letting $n\to \infty $ above, we obtain
$$
\langle ({\bf z}, DT_k(u)), \varphi\rangle\leq \|{\bf z}\|_{L^\infty(A, \mathbb{R}^N)}|DT_k(u)|(\overline{V})
$$
 and, by \cite[Proposition 2.20]{OrtizPetitta2024}, letting $k\to \infty$ , this becomes
$$
\langle ({\bf z}, Du),\varphi \rangle \leq \sup_{A}|\varphi|\|{\bf z}\|_{L^\infty(A,\mathbb{R}^N)}\int_{V}|Du|,
$$
for all $\varphi \in C_c^1(A)$. Clearly, the functional $\langle ({\bf z}, Du),\varphi\rangle$ can be extended continuously to $C_c(A)$,  the uniform closure of $C_c^1(A)$, and to $C_0(A)$, the uniform closure of $C_c(A)$. Hence
$$
({\bf z}, Du)
$$
is a finite Radon measure on $A$.
\fim 

\begin{proposition}
Let ${\bf z}\in X_{\mathcal{M}}(\Omega)$ and $u\in BV(\Omega)$ be such that $u^*\in L^1(\Omega, \hbox{div}\,{\bf z})$. Then the following Green formula holds
$$
\int_{\Omega}u^*\,\hbox{div}\,{\bf z}+\int_{\Omega}({\bf z}, Du)=\int_{\partial\Omega}u[{\bf z},\nu]\,\mathcal{H}^{N-1},
$$
where $[{\bf z},\nu]$ is the weak trace on $\partial\Omega$ of the normal component of ${\bf z}$ as defined in \cite{Anzellotti1983, Caselles2011}.
\end{proposition}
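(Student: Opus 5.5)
The Green formula follows from the very definition of the pairing in Definition \ref{zDu}, tested against a family of cut-off functions that increase to $1$ in $\Omega$, plus a passage to the limit controlled by the boundary trace. The first step is to reduce to bounded $u$. For $u\in BV(\Omega)\cap L^\infty(\Omega)$ the formula is the classical Anzellotti/Chen--Frid/Caselles Green formula for ${\bf z}\in X_{\mathcal{M}}(\Omega)$; it is proved below via cut-offs. The unbounded case is then obtained by truncation.

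\textbf{Step 1 (bounded case).} Fix a cut-off $\varphi_\epsilon\in C^\infty_c(\Omega)$ with $0\le\varphi_\epsilon\le1$, $\varphi_\epsilon=1$ on $\Omega_{2\epsilon}$, and $|\nabla\varphi_\epsilon|\le C/\epsilon$. Definition \ref{zDu} gives
$$
\int_\Omega\varphi_\epsilon\,({\bf z},Du)+\int_\Omega u^*\varphi_\epsilon\,\mathrm{div}\,{\bf z}=-\int_\Omega u\,{\bf z}\cdot\nabla\varphi_\epsilon\,dx .
$$
By Proposition \ref{Dist}, $({\bf z},Du)$ is a finite measure. Since $u^*\in L^1(\Omega,\mathrm{div}\,{\bf z})$, dominated convergence handles both terms on the left as $\epsilon\to0$. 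For the right-hand side, the Lipschitz boundary allows us to take $\varphi_\epsilon$ as a function of the distance to $\partial\Omega$. Then $-\nabla\varphi_\epsilon$ approximates $\nu\,\mathcal{H}^{N-1}\llcorner\partial\Omega$ weakly, and
$$
-\int_\Omega u\,{\bf z}\cdot\nabla\varphi_\epsilon\,dx\to\int_{\partial\Omega}u\,[{\bf z},\nu]\,d\mathcal{H}^{N-1}.
$$
This is exactly Anzellotti's construction of the weak normal trace, as extended in \cite{Caselles2011} to measure divergence. I plan to invoke that characterization rather than reprove it. It uses the trace convergence of Proposition \ref{BT} together with $\|{\bf z}\|_\infty<\infty$.

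\textbf{Step 2 (truncation).} For general $u$, apply Step 1 to $T_k(u)$. Here $(T_k(u))^*=T_k(u^*)$ holds $|\mathrm{div}\,{\bf z}|$-a.e. This is a Chen--Frid-type statement, already used in the proof of Proposition \ref{Dist}, and it rests on $|\mathrm{div}\,{\bf z}|$ vanishing on $\mathcal{H}^{N-1}$-null sets. We then let $k\to\infty$ term by term:
\begin{itemize}
\item $\int T_k(u^*)\,\mathrm{div}\,{\bf z}\to\int u^*\,\mathrm{div}\,{\bf z}$ by dominated convergence, since $|T_k(u^*)|\le|u^*|\in L^1(|\mathrm{div}\,{\bf z}|)$;
\item $\int_{\partial\Omega}T_k(u)[{\bf z},\nu]\to\int_{\partial\Omega}u[{\bf z},\nu]$, because $[{\bf z},\nu]\in L^\infty(\partial\Omega)$, $u|_{\partial\Omega}\in L^1$, and the trace of $T_k(u)$ equals $T_k$ of the trace;
\item $\int_\Omega({\bf z},DT_k(u))\to\int_\Omega({\bf z},Du)$, by the convergence of pairings under truncation already used in the proof of Proposition \ref{Dist} (\cite[Proposition 2.20]{OrtizPetitta2024}).
\end{itemize}
If that citation only gives distributional convergence, I would instead apply the bounded-case identity to $G_k(u)$-type pieces. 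Alternatively, I would bound the total mass $|({\bf z},D(u-T_k u))|(\Omega)\le\|{\bf z}\|_\infty|D G_k(u)|(\Omega)\to0$ using Proposition \ref{Dist} and the linearity of the pairing in $u$.

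The main obstacle is this last linearity-in-$u$ step. For the pairing $({\bf z},DG_k(u))$ to be defined we need $G_k(u)^*\in L^1(\mathrm{div}\,{\bf z})$, which follows from $|G_k(u^*)|\le|u^*|$. Once that holds, $|DG_k(u)|(\Omega)=|Du|(\{|u|>k\})\to0$ (counting jump parts appropriately) yields the convergence, and the formula follows.
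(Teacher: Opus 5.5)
Your route (take the Green formula for bounded $BV$ functions from \cite{Caselles2011}, apply it to $T_k(u)$, let $k\to\infty$) is the one behind the result the paper cites, \cite[Proposition 2.5]{DeCiccoGiachettiSegura2019}. Step 1, however, is a citation and not a derivation. For $u\in BV(\Omega)\cap L^\infty(\Omega)$, the convergence $-\int_\Omega u\,{\bf z}\cdot\nabla\varphi_\epsilon\,dx\to\int_{\partial\Omega}u[{\bf z},\nu]\,d\mathcal{H}^{N-1}$ is equivalent to the bounded Green formula. The weak normal trace is defined through smooth test functions, and extending it to $BV\cap L^\infty$ is exactly the content of that formula. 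Your second and third bullets are fine. The fallback in the third becomes rigorous via the coarea formula, $|DG_k(u)|(\Omega)=\int_{\{|t|>k\}}P(\{u>t\},\Omega)\,dt\to0$, combined with Proposition \ref{Dist}.

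The genuine error is the identity $(T_k(u))^*=T_k(u^*)$ $|\hbox{div}\,{\bf z}|$-a.e. Your first bullet and your check that $(G_k(u))^*\in L^1(\Omega,\hbox{div}\,{\bf z})$ both rest on it, and it is false.
\begin{itemize}
\item Chen--Frid only give $|\hbox{div}\,{\bf z}|\ll\mathcal{H}^{N-1}$, so $\hbox{div}\,{\bf z}$ may charge $J_u$.
\item On $J_u$ one has $(T_k(u))^*=\frac{T_k(u^+)+T_k(u^-)}{2}$, which need not equal $T_k\big(\frac{u^++u^-}{2}\big)$.
\item For instance, ${\bf z}=\chi_{\{x_N>0\}}e_N$ has $\hbox{div}\,{\bf z}$ equal to $\mathcal{H}^{N-1}$ restricted to $\{x_N=0\}$. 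With $u=3\chi_{\{x_N>0\}}$ and $k=1$, on that hyperplane $(T_1(u))^*=\frac12$ while $T_1(u^*)=1$.
\item The proof of Proposition \ref{Dist} does not use this identity; it only uses that mollifications converge to $(T_k(u))^*$.
\end{itemize}

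The repair is short.
\begin{itemize}
\item Pointwise convergence: for every $x\in\Omega\setminus(S_u\setminus J_u)$, hence $|\hbox{div}\,{\bf z}|$-a.e., one has $(T_k(u))^*(x)=u^*(x)$ as soon as $k\ge\max\{|u^+(x)|,|u^-(x)|\}$ (resp.\ $k\ge|\tilde u(x)|$ at approximate continuity points).
\item Domination: $T_k$ and $G_k$ are odd, nondecreasing and $1$-Lipschitz, so $|T_k(a)+T_k(b)|\le|a+b|$ and $|G_k(a)+G_k(b)|\le|a+b|$ for all $a,b\in\mathbb{R}$. Hence $|(T_k(u))^*|\le|u^*|$ and $|(G_k(u))^*|\le|u^*|$.
\item These two facts give dominated convergence in your first bullet, and also $(G_k(u))^*\in L^1(\Omega,\hbox{div}\,{\bf z})$.
\item Since $T_k+G_k=\mathrm{id}$, the splitting $u^*=(T_k(u))^*+(G_k(u))^*$ holds on $\Omega\setminus(S_u\setminus J_u)$. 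This gives the linearity $({\bf z},Du)=({\bf z},DT_k(u))+({\bf z},DG_k(u))$ that your third bullet needs.
\end{itemize}
With these corrections your argument is complete.
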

The proof of the previous result can be found in \cite[Proposition 2.5]{DeCiccoGiachettiSegura2019}.

We set
$$
X_{\mathcal{M}_{loc}}(\Omega)=\{{\bf z}\in L^\infty(\Omega,\mathbb{R}^N)\colon \hbox{div}\,{\bf z}\,\hbox{ is a locally finite Radon measure}\}.
$$
\begin{remark}
\label{R1}
Definition \ref{zDu} is correct even if we assume ${\bf z}\in X_{\mathcal{M}_{ \rm loc}}(\Omega)$ and $u\in BV_{\rm loc}(\Omega)$. In this case, Proposition \ref{Dist} implies that 
$({\bf z}, Du)$ is a Radon measure on $\Omega$ and 
$$
\left|\int_{B}({\bf z}, Du)\right|\leq \int_{B} |({\bf z}, Du)|\leq \|{\bf z}\|_{L^\infty(A)}\int_{B}|Du|,
$$
for all Borel sets $B$ and for all opens sets $\omega$ such that $B\subset \omega\subset\subset \Omega$.
\end{remark}
\begin{proposition} 
\label{uz}
Let ${\bf z}\in X_{\mathcal{M}_{loc}}(\Omega)$ and $u\in BV(\Omega)$ such that $u^*\in L^1(\Omega, \hbox{div}\,{\bf z})$. Then 
$$
\hbox{div}\,(u{\bf z})=u^*\,\hbox{div}\,{\bf z}+({\bf z}, Du)\quad\hbox{ as Radon measures in}\; \Omega.
$$
Furthermore, if $u$ also belongs to $L^\infty(\Omega)$, $u{\bf z}\in X_{\mathcal{M}}(\Omega)$ and the following Green formula holds:
$$
\int_{\Omega}u^*\hbox{div}\,{\bf z}+\int_{\Omega}({\bf z}, Du)=\int_{\partial\Omega}[u{\bf z},\nu]\,d\mathcal{H}^{N-1}.
$$
\end{proposition}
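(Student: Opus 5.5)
We prove the product rule by testing against $C^\infty_c(\Omega)$ and unwinding Definition \ref{zDu} (extended to the local setting as in Remark \ref{R1}). The identity is then a tautology at the level of distributions. For $\varphi\in C^\infty_c(\Omega)$, the distributional divergence of $u{\bf z}\in L^\infty_{loc}$ (note $u{\bf z}\in L^1(\Omega,\mathbb{R}^N)$ since $u\in L^1$ and ${\bf z}\in L^\infty$) acts as $\langle \hbox{div}(u{\bf z}),\varphi\rangle=-\int_\Omega u{\bf z}\cdot\nabla\varphi\,dx$. By definition,
$$
\langle({\bf z},Du),\varphi\rangle=-\int_\Omega u^*\varphi\,\hbox{div}\,{\bf z}-\int_\Omega u{\bf z}\cdot\nabla\varphi\,dx,
$$
so $\langle\hbox{div}(u{\bf z}),\varphi\rangle=\int_\Omega u^*\varphi\,\hbox{div}\,{\bf z}+\langle({\bf z},Du),\varphi\rangle$.

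To upgrade this to an identity of Radon measures, we observe two things. First, $u^*\,\hbox{div}\,{\bf z}$ is a finite Radon measure because $u^*\in L^1(\Omega,\hbox{div}\,{\bf z})$. Second, $({\bf z},Du)$ is a Radon measure on $\Omega$ by Proposition \ref{Dist} and Remark \ref{R1}, and it is in fact finite, with total mass at most $\|{\bf z}\|_\infty\int_\Omega|Du|$, since $u\in BV(\Omega)$. Hence $\hbox{div}(u{\bf z})$, as a distribution, coincides on $C^\infty_c$ with a finite Radon measure. By density of $C^\infty_c$ in $C_c$, the two measures agree. The only delicate point is that the definition requires $u^*$ to be integrable against $|\hbox{div}\,{\bf z}|$ even though $\hbox{div}\,{\bf z}$ is only locally finite. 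This is exactly what the hypothesis provides, so no further truncation is needed.

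For the second part, assume also $u\in L^\infty(\Omega)$. Then $u{\bf z}\in L^\infty(\Omega,\mathbb{R}^N)$, and by the first part its divergence is a finite measure, so $u{\bf z}\in X_{\mathcal M}(\Omega)$. Its normal trace $[u{\bf z},\nu]\in L^\infty(\partial\Omega)$ is then defined by Anzellotti's theory. To get Green's formula, we apply the Green formula for fields in $X_{\mathcal M}(\Omega)$ (stated above, due to \cite{DeCiccoGiachettiSegura2019}) to the pair $(u{\bf z},w)$ with $w\equiv1\in BV(\Omega)$. Since $Dw=0$ gives $(u{\bf z},Dw)=0$, that formula yields
$$
\int_\Omega\hbox{div}(u{\bf z})=\int_{\partial\Omega}[u{\bf z},\nu]\,d\mathcal H^{N-1}.
$$
Integrating the measure identity from the first part over $\Omega$, which is legitimate because all measures involved are finite, gives the claimed formula.

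The main obstacle is this last step, and specifically checking that the Green formula applies with $w\equiv1$: it requires only ${\bf z}\in X_{\mathcal M}(\Omega)$ and $1\in BV(\Omega)$, both of which hold. If one prefers to avoid citing it, one can instead take cutoffs $\varphi_\epsilon\in C^\infty_c(\Omega)$ with $\varphi_\epsilon\to1$ and pass to the limit in $-\int u{\bf z}\cdot\nabla\varphi_\epsilon$. The boundary term then arises from the definition of the weak normal trace, and the remaining terms converge by dominated convergence against the finite measures.
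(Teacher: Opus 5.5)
Your proof is correct, but it follows a more direct route than the paper's.

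\textbf{The paper's route.} The paper proves the product rule by approximation. It sets $u_n^{(k)}=T_k(u)\ast\rho_n$ and uses the classical Leibniz rule for these smooth functions. It then lets $n\to\infty$, using the $\hbox{div}\,{\bf z}$-a.e. convergence of mollifications to the precise representative (Chen--Frid). Finally it lets $k\to\infty$ via \cite[Proposition 2.20]{OrtizPetitta2024}. The Green formula is then simply asserted once $u{\bf z}\in X_{\mathcal M}(\Omega)$ is known.

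\textbf{Your route.} You observe that Definition \ref{zDu} defines $({\bf z},Du)$ precisely through $-\int u^*\varphi\,\hbox{div}\,{\bf z}-\int u{\bf z}\cdot\nabla\varphi$. Hence the identity in $\mathcal D'(\Omega)$ is tautological. The only real content is that $u^*\hbox{div}\,{\bf z}$ and $({\bf z},Du)$ are both finite Radon measures. The first follows from the hypothesis, the second from Proposition \ref{Dist} and Remark \ref{R1} together with inner regularity.

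\textbf{Comparison.} Your version is shorter and needs no truncation, no double limit and no external convergence result. The paper's mollification scheme is what one would need if the pairing were defined as a limit of ${\bf z}\cdot\nabla u_n$, and it confirms consistency with the smooth case. For the definition actually adopted here, however, it ends at exactly the identity you write down.

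\textbf{Green formula.} You apply the $X_{\mathcal M}$ Green formula to the pair $(u{\bf z},1)$. This works because $(u{\bf z},D1)=0$ by the definition, and $1\in L^1(\Omega,\hbox{div}(u{\bf z}))$ since $\hbox{div}(u{\bf z})$ is finite. This fills in a step the paper leaves implicit.

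\textbf{Minor slip.} $u{\bf z}$ need not lie in $L^\infty_{loc}$ when $u$ is unbounded. This does no harm, since you only use $u{\bf z}\in L^1$, which is all the distributional divergence requires.
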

\dem Let $(\rho_n)$ be the mollifiers defined as in \eqref{mll} with $\epsilon=1/n$ and let the truncation $T_k\colon\mathbb{R}\to \mathbb{R}$ defined as in \eqref{Tk}. Denote 
$$
u_n^{(k)}=T_k(u)\ast \rho_n\quad\hbox{ for all }\; k,n\in \mathbb{N}.
$$
Note that, for $\varphi\in C^\infty_c(\Omega)$,
$$
-\int_{\Omega}u_n^{(k)}{\bf z}\cdot\nabla\varphi\,dx=\int_{\Omega}\varphi u_n^{(k)}\,\hbox{div}\,{\bf z}+\int_{\Omega}\varphi {\bf z}\cdot\nabla u_n^{(k)}\,dx.
$$
Letting $n\to \infty$ above, using $u_n^{(k)}(x)\to T_k(u(x))$ $\hbox{div}\,{\bf z}-$a.e. $x\in \hbox{supp}\,{\varphi}$, we obtain 
$$
-\int_{\Omega}T_k(u){\bf z}\cdot\nabla \varphi\,dx=\int_{\Omega}\varphi (T_k(u))^*\,\hbox{div}\,{\bf z}+\int_{\Omega}\varphi ({\bf z}, DT_k(u)).
$$
Hence, now letting $k\to\infty$, by \cite[Proposition 2.20]{OrtizPetitta2024}, we obtain
\begin{equation}
\label{Id1}
-\int_{\Omega}u{\bf z}\cdot \nabla \varphi\,dx=\int_{\Omega}\varphi u^*\,\hbox{div}\,{\bf z}+\int_{\Omega}\varphi({\bf z}, Du).
\end{equation}
Since $({\bf z}, Du)$ is a finite Radon measure in $\Omega$ and $u^*\in L^1(\Omega, \hbox{div}\,{\bf z})$,  we can infer from \eqref{Id1} that $\hbox{div}\,(u{\bf z})$ is finite Radon measure in $\Omega$ and
$$
\hbox{div}\,(u{\bf z})=u^*\hbox{div}\,{\bf z}+({\bf z}, Du)\quad\hbox{ as Radon measures in }\; \Omega.
$$
Moreover, if we assume that $u\in L^\infty(\Omega)$, then $u{\bf z}\in X_{\mathcal{M}}(\Omega)$ and so, the following Green formula holds:
$$
\int_{\Omega}u^*\hbox{div}\,{\bf z}+\int_{\Omega}({\bf z}, Du)=\int_{\partial\Omega}[u{\bf z},\nu]\,d\mathcal{H}^{N-1}.
$$
\fim

\section{Main assumptions and results}
\label{main}

The main result  of this section  deals with the existence of a solution of  problem 
\begin{equation}
\label{P} 
\left\{
\begin{array}{rclr}
-\Delta_1u&=&\frac{\lambda}{|x|} s(x)+\frac{f}{u^\gamma}&\quad \mbox{ in } \Omega,\\
u&=&0 &\quad \mbox{ on }\partial\Omega,
\end{array}
\right.
\end{equation} 
where  $ \Delta_1u=\mbox{div}\,\left(\frac{Du}{|Du|}\right)$ and $\Omega$ is a bounded, open set of $\mathbb{R}^N$ with Lipschitz boundary $\partial\Omega$ containing the origin, $\lambda<N-1$, $\gamma> 0$,  $f$ is a nonnegative function in $L^{N,\infty}(\Omega)$, the measurable selection $s(x)\in \mbox{Sgn}(u(x))$ a.e. $x\in\Omega$ with set-valued sign function $\hbox{Sgn}\colon \mathbb{R}\to \mathcal{P}({\mathbb{R}})$ defined as
\begin{equation}
\hbox{Sgn}\,(s)=\left\{
\begin{array}{lcr}
\frac{s}{|s|}& \hbox{if}& s\neq0\\
{[-1,1]}&\hbox{if}& s=0.
\end{array}
\right.
\end{equation}

We first stress that, as $Du/|Du|$ loses its  meaning when $Du=0$, a precise definition of solution for \eqref{P} is needed; this notion  will be given in accordance with the one  introduced in \cite{acm2001, AndreuMazonMollCaselles2004, AndreuCasellesMazon2004, DeCiccoGiachettiSegura2019,dgop}. {We would like to emphasize that a condition similar to $\lambda<N-1$ was established in \cite{WangWillem}}.

Moreover, problem \eqref{P} represents a   borderline case with respect to the  previous works  (\cite{DeCiccoGiachettiSegura2019, LatorreOlivaPetittaSegura2021}) due to the presence of the perturbation term 
$$
\frac{\lambda}{|x|}s(x),
$$
where $1/|x|$ belongs to Marcinkiewicz (or Lorentz) space $L^{N,\infty}(\Omega)$ but not to $L^N(\Omega)$. The main feature of this work is to analyse the influence of this perturbation term on the existence of the solutions for \eqref{P} and, in the spirit of \cite{DeCiccoGiachettiSegura2019,dgop}, the relation with the  regularizing effect provided by the term ${f}{u^{-\gamma}}$.  

\begin{definition}
\label{DS}
Let $f\in L^{N,\infty}(\Omega)$, $f\geq 0$.  We say that a nonnegative function $u\in BV(\Omega)$  is a solution to \eqref{P} if there exist $ {\bf z}\in X_{\mathcal{M}_{loc}}(\Omega)$ with $\|{\bf z}\|_{L^\infty(\Omega)}\leq 1$ such that
\begin{itemize}
\item[$(a)$] $ \frac{f}{u^\gamma}\in L^1_{loc}(\Omega)$;
\item[$(b)$] $ \chi_{\{u>0\}}\in BV_{loc}(\Omega)$;
\item[$(c)$] $-\chi_{\{u>0\}}^*\hbox{div}\,{\bf z}=\frac{\lambda}{|x|}\chi_{\{u>0\}}+\frac{f}{u^\gamma}\quad\hbox{ in }\; \mathcal{D}'(\Omega)$;
\item[$(d)$]   $ ({\bf z}, Du)=|Du|,$  as Radon measures on $\Omega$; 
\item[$(e)$]  {for every $0\leq \phi\in C^1(\overline{\Omega})\cap L^1(\Omega, {\rm div}\,{\bf z})$ with $ \sup_{\Omega} \phi\leq\phi\lfloor_{\partial\Omega}$, and for $\mathcal{H}^{N-1}$-a.e. $x\in \partial\Omega$, one of the following holds:
$$
\lim_{\rho\downarrow0}\rho^{-N}\int_{\Omega\cap B_\rho(x)} u(y)\,dy=0\qquad\hbox{ or }\qquad \phi(x)+[\phi {\bf z},\nu](x)=0 .
$$}
 
\end{itemize}
\end{definition}

{
\begin{remark}
\label{Rnontriv}
Condition $(a)$ deserves to be emphasized: it is not a technical requirement but the very reason why no smallness assumption on $f$ is needed here. Indeed, if $|\{u=0\}\cap\{f>0\}|>0$ then $fu^{-\gamma}=+\infty$ on a set of positive measure and $(a)$ fails; hence {\it every} solution in the sense of Definition \ref{DS} satisfies $u>0$ a.e. in $\{f>0\}$ and, in particular, it is non-trivial as soon as $f\not\equiv0$. This is in sharp contrast with the case $\gamma=0$, where the trivial function solves the problem for all sufficiently small data (see \cite{ct, mst, OrtizPetitta2024}).
\end{remark}
}

\begin{remark}
\label{RTk}
Regarding condition $(e)$ above, it is worth noting that, as noted in \cite[Remark 4.6]{DeCiccoGiachettiSegura2019}, since ${\bf z}\in X_{\mathcal{M}_{loc}}(\Omega)$, then not necessarily $[{\bf z}, \nu]\in L^1(\partial\Omega)$ and so, the usual weak trace condition, i.e., $[{\bf z}, \nu]\in \hbox{Sgn}(-u)$ a.e. in $\partial\Omega$ does not need to hold; to get rid of this fact the weaker  condition $(e)$  is requested and  this  is in accordance with \cite{DeCiccoGiachettiSegura2019, dgop}. 
\end{remark}

\begin{remark}
In case of a positive datum $f$ Definition \ref{DS} looks cleaner: 
 
Let $0<f\in L^{N,\infty}(\Omega)$. We say that a nonnegative function $u\in BV(\Omega)$ is a solution to \eqref{P} if there exist {${\bf z}\in X_{\mathcal{M}}(\Omega)$}  with $\|{\bf z}\|_{L^\infty(\Omega)}\leq 1$ and {${\rm div}\,{\bf z}\in L^1(\Omega)$} such that
\begin{itemize}
\item[$(1)$] ${\frac{f}{u^\gamma}\in L^1(\Omega)}$;
\item[$(2)$] $- \hbox{div}\,{\bf z}=\frac{\lambda}{|x|}+\frac{f}{u^\gamma}\quad\hbox{ in }\; \mathcal{D}'(\Omega)$;
\item[$(3)$]   $ ({\bf z}, Du)=|Du|,$  as Radon measures on $\Omega$; 
\item[$(4)$] {for $\mathcal{H}^{N-1}$-a.e. $x\in\partial\Omega$ one of the following holds:
$$
\lim_{\rho\downarrow0}\rho^{-N}\int_{\Omega\cap B_\rho(x)}u(y)\,dy=0\qquad\hbox{ or }\qquad [{\bf z}, \nu](x)=-1 .
$$
}
 
\end{itemize}

\end{remark}

The  main result of this paper   is the following: 
\begin{theorem} 
\label{mainTh}
Let $\gamma>0$ and $\lambda<N-1.$  Suppose that $f$ is a nonnegative function in $L^{N,\infty}(\Omega)$.  Then there exists a nonnegative solution $u$ to \eqref{P} in the sense of Definition \ref{DS}. {Moreover:
\begin{itemize}
\item[$(i)$] $u>0$ a.e. in $\{f>0\}$; in particular, $u\not\equiv0$ whenever $f\not\equiv0$;
\item[$(ii)$] $u\in L^\infty(\Omega)$ and
$$
\|u\|_{L^\infty(\Omega)}\leq\left(\frac{(N-1)\,\zeta_N\,\|f\|_{L^{N,\infty}(\Omega)}}{N-1-\lambda^+}\right)^{1/\gamma},
$$
where $\zeta_N$ is the optimal constant given in \eqref{bestl}.
\end{itemize}}
\end{theorem}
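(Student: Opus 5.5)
We construct the solution as the limit, as $p\to1^+$, of solutions $u_p$ of the approximating $p$-Laplace problems with Hardy potential $\lambda|u|^{p-2}u/|x|^p$ and singular term $f/u^\gamma$.

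\textbf{Approximation and $p$-uniform estimates.} First we regularize the singular term, replacing it by $T_n(f)/(u+1/n)^\gamma$, and solve these nonsingular problems for fixed $p$. Existence holds because $\lambda<N-1<((N-p)/p)^p$ for $p$ near $1$; this is exactly where the Hardy inequality $\int|\nabla v|^p\ge ((N-p)/p)^p\int |v|^p/|x|^p$ makes the operator coercive, and $\lambda^+$ replaces $\lambda$ when $\lambda<0$. We then let $n\to\infty$ to get $u_p$, using the comparison/local positivity arguments of \cite{BoccardoOrsina2010,DCA}.

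The central estimate is an $L^\infty$ bound independent of $p$. Take $G_k(u_p)$ as test function. For $u_p>k$ we have $f/u_p^\gamma\le f/k^\gamma$, so
$$
\int|\nabla G_k(u_p)|^p\le \lambda^+\int \frac{u_p^{p-1}G_k(u_p)}{|x|^p}+k^{-\gamma}\int f\,G_k(u_p).
$$
The second term is controlled by the Lorentz H\"older inequality and Proposition \ref{SLp}, giving the bound $k^{-\gamma}\|f\|_{L^{N,\infty}}\zeta_{N,p}\|\nabla G_k\|_p$. The Hardy term is absorbed using the Hardy inequality with constant $((N-p)/p)^p$. Since $u_p^{p-1}G_k\le G_k^p+k^{p-1}G_k$, the lower-order piece $k^{p-1}G_k$ is handled through Young's inequality and vanishes as $p\to1$.

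Letting $p\to1$ in the resulting Stampacchia-type inequality, with $\zeta_{N,p}\to\zeta_N$ and $(N-p)/p\to N-1$, should give $G_k(u)\equiv0$ in the limit whenever
$$
k^\gamma\ge \frac{(N-1)\zeta_N\|f\|_{L^{N,\infty}}}{N-1-\lambda^+}.
$$
This yields (ii). In the same step we obtain a uniform bound on $\int|\nabla u_p|^p$, or on $\int |\nabla u_p^{\sigma}|^p$ for a suitable power $\sigma$ when $\gamma>1$, again independent of $f$ after absorption. We also get a local lower bound $u_p\ge c_\omega>0$ on compact sets where $f>0$, by comparison with the solution of the problem with $\lambda=0$ and $f$ truncated.

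\textbf{Passage to the limit.} By Young's inequality, $\int|\nabla u_p|\le \frac1p\int|\nabla u_p|^p+\frac{p-1}{p}|\Omega|$, so $u_p$ is bounded in $BV$. Up to subsequences, $u_p\to u$ in $L^q$ and a.e., and $|\nabla u_p|^{p-2}\nabla u_p\rightharpoonup {\bf z}$ weakly in $L^q$ for every $q$, with $\|{\bf z}\|_\infty\le1$ by the standard argument. The term $\lambda u_p^{p-1}/|x|^p$ converges to $\lambda\chi_{\{u>0\}}/|x|$, up to identification on $\{u=0\}$, where one must use the multiplication by $\chi^*_{\{u>0\}}$ as in \cite{DeCiccoGiachettiSegura2019}. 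Fatou's lemma gives $f/u^\gamma\in L^1_{loc}$, which is (a), and hence (i) via Remark \ref{Rnontriv}. Testing with $\varphi$ times suitable functions of $u_p$ gives (c).

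For (d) and (e), test the approximating equations with $u_p\varphi$ and use lower semicontinuity of $\int\varphi|Du|$, together with the Green formula of Proposition \ref{uz}. This gives $({\bf z},Du)\ge|Du|$, and the reverse inequality comes from Proposition \ref{Dist}. The boundary condition (e) follows from lower semicontinuity of the functional including the boundary term $\int_{\partial\Omega}u$.

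\textbf{Main obstacle.} The hardest step is identifying (d) and the limit of the Hardy term when $\gamma>1$. There one only controls powers of $u_p$ near $\partial\Omega$, and $\chi_{\{u>0\}}$ must be shown to be in $BV_{loc}$, which is (b). For (b) I would first try the test functions $T_\epsilon(u_p)/\epsilon$, applying Proposition \ref{Dist} locally, with $u\in L^\infty$ ensuring $u^*\in L^1(\Omega,{\rm div}\,{\bf z})$.
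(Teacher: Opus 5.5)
Your overall architecture (regularized $p$-Laplace problems, $p\to1^+$, ${\bf z}$ as weak limit of $|\nabla u_p|^{p-2}\nabla u_p$, a Stampacchia argument with $G_k$) is the paper's. Two steps, however, do not go through as written.

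\textbf{The $L^\infty$ step.} As set up, it does not yield the constant in $(ii)$. The splitting $u_p^{p-1}G_k(u_p)\le G_k(u_p)^p+k^{p-1}G_k(u_p)$ loses a factor two in the limit. Since $u_p^{p-1}\to1$ on $\{u>0\}$, both pieces tend to $G_k(u)$. In particular the ``lower-order'' term $\lambda^+k^{p-1}\int_\Omega G_k(u_p)|x|^{-p}\,dx$ does not vanish: by Fatou its $\liminf$ is at least $\lambda^+\int_\Omega G_k(u)|x|^{-1}\,dx$. Treating it by Young, $k^{p-1}G_k\le\frac{p-1}{p}k^p+\frac1pG_k^p$, raises the Hardy coefficient to $\lambda^+\bigl(1+\frac1p\bigr)\bigl(\frac{p}{N-p}\bigr)^p\to\frac{2\lambda^+}{N-1}$. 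You would then need $\lambda^+<\frac{N-1}{2}$, and the bound would have $N-1-2\lambda^+$ in the denominator.

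The paper instead applies Young to the whole product, $u_p^{p-1}G_k(u_p)\le\frac{p-1}{p}u_p^p+\frac1pG_k(u_p)^p$. The Hardy coefficient is then $\frac{\lambda^+}{p}\bigl(\frac{p}{N-p}\bigr)^p\to\frac{\lambda^+}{N-1}$. The remainder $\frac{\lambda^+(p-1)}{p}\int_\Omega u_p^p|x|^{-p}\,dx$ vanishes thanks to the $p$-uniform energy bounds \eqref{Ep} and, for $\gamma>1$, \eqref{Estp}; unlike \eqref{L15}, these bounds do depend on $f$.

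Note also that there is no $p$-uniform $L^\infty$ bound on $u_p$ itself: for $\lambda>0$ every $u_p$ is unbounded at the origin (Proposition~\ref{Unbdd}). The inequality can only be closed after $p\to1^+$.

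With the Young step corrected, your way of closing is a legitimate variant of the paper's. Since $X_p:=\|\nabla G_k(u_p)\|_{L^p(\Omega)}$ is bounded uniformly in $p$, one has $X_p^p-X_p\to0$. The inequality then becomes $\bigl(1-\frac{\lambda^+}{N-1}-k^{-\gamma}\zeta_N\|f\|_{L^{N,\infty}(\Omega)}+o(1)\bigr)X_p\le o(1)$. This forces $X_p\to0$, and hence, by lower semicontinuity, $G_k(u)=0$ for every $k$ above the bound in $(ii)$. It even spares you the passage to the limit in $\int_\Omega fG_k(u_p)\,dx$ that the paper needs when letting $p\to1^+$.

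\textbf{Finite energy when $\gamma>1$.} This is the more serious gap. Your $BV$ bound relies on $\sup_p\int_\Omega|\nabla u_p|^p<\infty$, and for $\gamma>1$ nothing in your plan provides it. Testing with $u_n$ produces $\int_\Omega f_nu_n(u_n+\frac1n)^{-\gamma}\,dx$, which has no uniform control near $\partial\Omega$. Testing with $u_n^\gamma$ only bounds $u_p^{(\gamma+p-1)/p}$ in $W^{1,p}_0(\Omega)$ and $u_p$ in $W^{1,p}_{\rm loc}(\Omega)$. That gives $u^\gamma\in BV(\Omega)$ and $u\in BV_{\rm loc}(\Omega)$, which do not control the variation of $u$ near $\partial\Omega$, whereas Definition~\ref{DS} requires $u\in BV(\Omega)$.

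The paper obtains $u\in BV(\Omega)$ only at the level $p=1$, through an idea absent from your plan.
\begin{itemize}
\item If $f>0$, then $u>0$ a.e., so $-\mathrm{div}\,{\bf z}=\frac{\lambda}{|x|}+\frac{f}{u^\gamma}$. Test with $1-w_n$, where $w_n=1$ on $\partial\Omega$, $w_n\to0$ in $L^1(\Omega)$ and $\int_\Omega|\nabla w_n|\,dx\le A\,\mathcal{H}^{N-1}(\partial\Omega)$. This gives the $f$-independent bound \eqref{L15}, hence $\mathrm{div}\,{\bf z}\in L^1(\Omega)$. A Green-formula argument then yields $\int_\Omega|Du|\le\int_\Omega u\,|\mathrm{div}\,{\bf z}|\,dx<\infty$ (Theorem~\ref{FE}).
\item For $f\ge0$, one solves with $f_n=f+\frac1n$. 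Then $\int_\Omega|Du_n|+\int_{\partial\Omega}u_n\,d\mathcal{H}^{N-1}=\lambda\int_\Omega\frac{u_n}{|x|}\,dx+\int_\Omega f_nu_n^{1-\gamma}\,dx$, and the last term is at most $\|f_n\|_{L^1(\Omega)}^{1/\gamma}\bigl(\int_\Omega f_nu_n^{-\gamma}\,dx\bigr)^{1-1/\gamma}$. Together with \eqref{L15} and the Hardy inequality in $BV$ (absorbing $\frac{\lambda^+}{N-1}<1$), this gives a uniform $BV$ bound. Conditions $(a)$--$(e)$ are then re-checked for the limit pair.
\end{itemize}

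Likewise, the $\gamma>1$ points you single out as the main obstacle remain open in your plan. The paper identifies the Hardy term on $\{u=0\}$ through $\limsup_{p\to1^+}\|u_p^{p-1}\|_{L^q(\{u=0\})}\le|\Omega|^{1/q}$, which follows from \eqref{Estp}. It proves $(d)$ first for $(T_k(u))^\gamma$, then transfers it to $T_k(u)$ via a strictly increasing Lipschitz change of variable and Anzellotti's invariance of $\theta({\bf z},Dw,x)$.

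Two smaller points. First, $u\in L^\infty(\Omega)$ alone does not give $u^*\in L^1(\Omega,\mathrm{div}\,{\bf z})$, since $\mathrm{div}\,{\bf z}$ is a priori only locally finite; the paper derives it from \eqref{G}. Second, $N-1<\bigl(\frac{N-p}{p}\bigr)^p$ for $p$ near $1$ is false when $N\le4$, although $\lambda<\bigl(\frac{N-p}{p}\bigr)^p$ still follows from $\lambda<N-1$ by continuity.
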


{

\begin{remark}
\label{RemNeg}
No restriction is needed when $\lambda\leq0$: all our assumptions only involve $\lambda^+$ and are therefore automatically satisfied in this case, in which the term $\lambda|x|^{-1}s(x)$ acts as an absorption term. The relevant case is thus $0<\lambda<N-1$, in which the Hardy term competes with the diffusion.
\end{remark}
}
{Since the argument is somewhat involved, let us describe beforehand the structure of the proof:
\begin{itemize}
\item[$1.$] in Section \ref{ApproxP} we study the approximating $p$-Laplace problems \eqref{Pp} and we collect the estimates on their solutions, which are uniform with respect to $p\in(1,\bar p]$;
\item[$2.$] in Section \ref{PT} we let $p\to 1^+$: this produces the vector field ${\bf z}$ and the limit function $u$, and we check conditions $(a)$--$(e)$ of Definition \ref{DS} (Lemmas \ref{ConvZ}--\ref{chiSol}). If $0<\gamma\leq1$, the estimates of Proposition \ref{S} also give $u\in BV(\Omega)$, so that the existence part of Theorem \ref{mainTh} is already proved in this case;
\item[$3.$] if $\gamma>1$, the estimates only give $u\in BV_{\rm loc}(\Omega)$ together with $u^\gamma\in BV(\Omega)$. The finiteness of the energy of $u$ itself is obtained in Section \ref{PosDat}, first for a positive datum (Theorem \ref{FE}) and then, by approximating $f$ with $f+1/n$, for a general nonnegative one;
\item[$4.$] Section \ref{Br} is devoted to the $L^\infty$ bound $(ii)$, and the proof of Theorem \ref{mainTh} is completed at its end;
\item[$5.$] finally, Section \ref{Opt} shows that the restriction $\lambda<N-1$ cannot be improved.
\end{itemize}}

\section{{The approximating $p$-Laplace problems}}
\label{ApproxP}

{Let    $1<p<N$. We will prove Theorem \ref{mainTh} as a consequence  of the study of the asymptotic behaviour of the solutions to   problems
\begin{equation}
\label{Pp}
\left\{
\begin{array}{rclr}
-\Delta_p u&=&\dys \frac{\beta}{|x|^p}|u|^{p-2}u+\frac{f}{u^\gamma}&\mbox{ in  }  \Omega,\\
u&=&0&\mbox{ on }\partial\Omega,
\end{array}
\right.
\end{equation}
as $p$ go to $1^+$ for some ${  \beta}<\left(\frac{N-p}{p}\right)^p$, $\gamma>0$ and, for the moment, let us restrict to the case $f$ is in $L^m(\Omega)$ with $m=\left(\frac{p^*}{1-\gamma}\right)'$. In order to prove the existence of solutions of  \eqref{P} we  first need to prove the existence of solutions to the problem  \eqref{Pp}  in sense of the concept of solution given in \cite{BoccardoOrsina2010,DCA} ; in fact we will show that there exists  a nonnegative function $u\in W^{1,p}_0(\Omega)$ if $0<\gamma\leq1$, and $u\in W^{1,p}_{\rm loc}(\Omega)$ with $u^\frac{\gamma+p-1}{p}\in W^{1,p}_0(\Omega)$ if $\gamma>1$, such that 
\begin{align}
\nonumber&\forall \omega\subset\subset\Omega\quad\exists c_\omega>0\;\hbox{ s.t.}\quad u\geq c_\omega\quad\hbox{ in }\quad\omega;\\
\label{WSp}
&\int_{\Omega}|\nabla u|^{p-2}\nabla u\cdot\nabla \varphi\,dx=\beta\int_{\Omega}\frac{1}{|x|^p}|u|^{p-2}u\varphi\,dx+\int_{\Omega}\frac{f}{u^\gamma}\varphi\,dx\quad\hbox{ for all }\;\varphi\in C^\infty_c(\Omega).
\end{align}
We also need find uniform estimates of the family of solutions $(u_p)_{p>1}$, each solution $u_p$ obtained of \eqref{Pp}. In order to do so, for each $n\in \mathbb{N}$, we consider  the auxiliary Dirichlet problem
\begin{equation}
\label{Ppn}
\left\{
\begin{array}{rclr}
\dys -\Delta_p u&=& \dys \frac{\beta}{|x|^{p} +1/n}|u|^{p-2}u+\frac{f_n}{(u+1/n)^\gamma}&\mbox{ in  }  \Omega,\\
u&=&0&\mbox{ on }\partial\Omega,
\end{array}
\right.
\end{equation}
where $f_n=T_n(f)$.

 In order to show the existence of solutions to \eqref{Ppn}, for each $v\in L^p(\Omega)$, we shall prove that there exists $w_n\in W^{1,p}_0(\Omega)$ solution to the problem
\begin{equation}
\label{PpnV}
\left\{
\begin{array}{rclr}
-\Delta_p w_n&=&\dys \frac{\beta}{|x|^{p} +1/n}|w_n|^{p-2}w_n+\frac{f_n}{(|v|+1/n)^\gamma}&\mbox{ in  }  \Omega,\\
w_n&=&0&\mbox{ on }\partial\Omega.
\end{array}
\right.
\end{equation}
But this problem can be formulate as the following minimization problem
\begin{equation}
\label{Min}
M_n=\min_{w\in W^{1,p}_0(\Omega)}\left\{\frac{1}{p}\int_{\Omega}|\nabla w|^p\,dx-\frac{\beta}{p}\int_{\Omega}\frac{1}{|x|^p+1/n}|w|^p\,dx-\int_{\Omega}\frac{f_nw}{(|v|+1/n)^\gamma}\,dx\right\}     .
\end{equation}

Let $(w_{n, m})_{m\in\mathbb{N}}$ be a minimizing sequence to \eqref{Min}. Use Hardy's, H\"older's and Poincar\'e's inequalities to obtain,
\begin{equation}\label{38}
M_n+o_m(1)\geq \frac{1}{p}\left[1-{\beta^+}\left(\frac{p}{N-p}\right)^p\right]\int_{\Omega}|\nabla w_{n,m}|^p\,dx-n^{\gamma+1} C|\Omega|^\frac{p-1}{p}\left(\int_{\Omega}|\nabla w_{n,m}|^p\,dx\right)^\frac{1}{p},
\end{equation}
Hence,  $n\in \mathbb{N}$ fixed, $(w_{n,m})_{m\in\mathbb{N}}$ is a bounded sequence in $W^{1,p}_0(\Omega)$. Consequently, there exists $w_n\in W^{1,p}_0(\Omega)$ such that, up to subsequences, 
\begin{eqnarray}
\label{Conv}
w_{m, n}\rightharpoonup w_n &\hbox{ in }\; W^{1,p}_0(\Omega)\\
\nonumber w_{m, n}\to w_n&\hbox{ in }\; L^q(\Omega)\; \hbox{ for all } \; q\in [p, p^*),
\end{eqnarray}
as $m\to \infty$.

Note that, by \eqref{Conv}, one has that
\begin{align}
\nonumber M_n&=\liminf_{m\to \infty}\left(\frac{1}{p}\int_{\Omega}|\nabla w_{n, m}|^p\,dx-\frac{\beta}{p}\int_{\Omega}\frac{1}{|x|^p+1/n}|w_{n,m}|^p\,dx-\int_{\Omega}\frac{f_nw_{n,m}}{(|v|+1/n)^\gamma}\,dx\right)\\
&\geq \frac{1}{p}\int_{\Omega}|\nabla w_n|^p\,dx-\frac{\beta}{p}\int_{\Omega}\frac{1}{|x|^p+1/n}|w_n|^p\,dx-\int_{\Omega}\frac{f_nw_n}{(|v|+1/n)^\gamma}\\
\nonumber&\geq M_n,
\end{align}
and so, since
$$
W^{1,p}_0(\Omega)\ni u\mapsto \frac{1}{p}\int_{\Omega}|\nabla u|^p\,dx-\frac{\beta}{p}\int_{\Omega}\frac{1}{|x|^p+1/n}|u|^p\,dx-\int_{\Omega}\frac{f_nu}{(|v|+1/n)^\gamma}\,dx
$$
is Frechet differentiable, $w_n$ satisfies the following identity
\begin{equation}
\label{WSnV}
\int_{\Omega}|\nabla w_n|^{p-2}\nabla w_n\cdot\nabla \varphi\,dx=\beta\int_{\Omega}\frac{1}{|x|^p+1/n}|w_n|^{p-2}w_n\varphi\,dx+\int_{\Omega}\frac{f_n\varphi}{(|v|+1/n)^\gamma}\,dx\quad\hbox{ for all }\; \varphi\in W^{1,p}_0(\Omega),
\end{equation}
that is, $w_n$ is a solution to \eqref{PpnV}. 

Use $w_n$ as test function to obtain 
$$
\int_{\Omega}|\nabla w_n|^p\,dx=\beta\int_{\Omega}\frac{1}{|x|^p+1/n}|w_n|^p\,dx+\int_{\Omega}\frac{f_nw_n}{(|v|+1/n)^\gamma}\,dx.
$$
Hence, by Hardy, H\"older and Poincar\'e inequalities, 
$$
\|w_n\|_{L^p(\Omega)}\leq C(\Omega,p)\left[\frac{C(\Omega,p) n^{\gamma+1}|\Omega|^{(p-1)/p}}{1-{\beta^+}\left(\frac{p}{N-p}\right)^p}\right]^{1/(p-1)}=: r(\beta, N,\gamma, p, |\Omega|, n)=r.
$$
{
Let us now observe that every solution of \eqref{PpnV} is nonnegative. Indeed, using $w_n^- $ as test function in \eqref{WSnV} we obtain
$$
-\int_{\Omega}|\nabla w_n^-|^p\,dx+\beta\int_{\Omega}\frac{|w_n^-|^p}{|x|^p+1/n}\,dx=\int_{\Omega}\frac{f_nw_n^-}{(|v|+1/n)^\gamma}\,dx\geq0.
$$
If $\beta\leq0$ this forces $w_n^-=0$ a.e. in $\Omega$, while if $0<\beta<\left(\frac{N-p}{p}\right)^p$ Hardy's inequality yields
$$
\beta\int_{\Omega}\frac{|w_n^-|^p}{|x|^p}\,dx\geq\beta\int_{\Omega}\frac{|w_n^-|^p}{|x|^p+1/n}\,dx\geq\int_{\Omega}|\nabla w_n^-|^p\,dx\geq\left(\frac{N-p}{p}\right)^p\int_{\Omega}\frac{|w_n^-|^p}{|x|^p}\,dx,
$$
which is impossible unless $w_n^-=0$ a.e. in $\Omega$. In both cases
\begin{equation}
\label{signw}
w_n\geq0\qquad\hbox{ a.e. in }\;\Omega.
\end{equation}

The solution of \eqref{PpnV} is, in fact, unique. Indeed, in view of \eqref{signw}, problem \eqref{PpnV} reads as $-\Delta_pw=h_n(x,w)$, where
$$
h_n(x,\sigma)=\frac{\beta\,\sigma^{p-1}}{|x|^p+1/n}+\frac{f_n(x)}{(|v(x)|+1/n)^\gamma},\qquad \sigma>0,
$$
and, since the second summand is nonnegative and $p>1$,
$$
\frac{h_n(x,\sigma)}{\sigma^{p-1}}=\frac{\beta}{|x|^p+1/n}+\frac{f_n(x)}{(|v(x)|+1/n)^\gamma}\,\sigma^{1-p}\qquad\hbox{ is nonincreasing with respect to }\;\sigma>0 .
$$
This is precisely the structure condition under which the uniqueness of the nonnegative solution holds, by the Brezis--Oswald theorem \cite{BrezisOswald} for $p=2$ and by the D\'iaz--Saa inequality \cite{DiazSaa} for a general $p>1$. Let us stress that the Hardy term is harmless here: being $(p-1)$-homogeneous, it cancels out in the D\'iaz--Saa argument. 
}

Then we  can consider the application $S_n\colon B_r(0)\subset L^p(\Omega)\to B_r(0)$ defined as $S_n(v)=w_n$, which, in turn, is invariant in $B_r(0)$. Moreover, it is not difficult to show that $S_n$ is both continuous and compact on $L^p(\Omega)$. Hence, by Schauder's fixed point theorem there exists $u_n$ in $W^{1,p}_0(\Omega)$ such that $S_n(u_n)=u_n$. In other  words $u_n$ solves 
\begin{equation}
\label{AuxP}
\left\{
\begin{array}{rclc}
-\Delta_ p u&=& \dys \frac{\beta}{|x|^p+1/n}|u|^{p-2}u+\frac{f_n}{(|u|+1/n)^\gamma}&\hbox{ in }\; \Omega\\
           u&=&0&\hbox{ on }\;\partial\Omega.
\end{array}
\right.
\end{equation}

{Since $u_n=S_n(u_n)$ solves \eqref{PpnV} with $v=u_n$, estimate \eqref{signw} gives $u_n=|u_n|\geq0$ a.e. in $\Omega$.} Consequently, one can drop  the absolute value of $u_n$ in \eqref{AuxP}, i.e.,
\begin{equation}
\label{Un}
\int_{\Omega}|\nabla u_n|^{p-2}\nabla u_n\cdot \nabla\varphi\,dx=\beta\int_{\Omega}\frac{|u_n|^{p-2}u_n\varphi}{|x|^p+1/n}\,dx+\int_{\Omega}\frac{f_n\varphi}{(u_n+1/n)^\gamma}\,dx\quad\hbox{ for all }\,\varphi\in C^1_c(\Omega).
\end{equation}
Moreover, if $p<N<\frac{p(2p-1)}{p-1}$, then the solutions $u_n$ belongs to $L^\infty(\Omega)$. Indeed, use $G_k(u_n)$ as test function in \eqref{AuxP} to obtain 
\begin{align}
\int_{\Omega}|\nabla G_k(u_n)|^p\,dx&=\beta\int_{\Omega}\frac{u_n^{p-1}G_k(u_n)}{|x|^p+1/n}\,dx+\int_{\Omega}\frac{f_nG_k(u_n)}{(u_n+1/n)^\gamma}\,dx\\
&\leq \beta^+ n \|u_n\|_{L^r(\Omega)}^{p-1}\|G_k(u_n)\|_{L^{p^*}(\Omega)}|A(k)|^{1-\frac{p-1}{r}-\frac{1}{p^*}}+n^{\gamma+1}\|G_k(u_n)\|_{L^{p^*}(\Omega)}|A(k)|^{1-\frac{1}{p^*}}\\
&\leq n(\beta^+\|u_n\|_{L^r(\Omega)}^{p-1}+n^\gamma)\|G_k(u_n)\|_{L^{p^*}(\Omega)}|A(k)|^{1-\frac{p-1}{r}-\frac{1}{p^*}},
\end{align}
since $|A(k)|\leq k^{-p^*}\int_{\Omega}u_n^{p^*}\,dx<1$ for $k\geq k_0$. Using the Sobolev inequality on the left-side hand above, we obtain
$$
S_{N,p}^{-p}\|G_k(u_n)\|_{L^{p^*}(\Omega)}^{p-1}\leq n(\beta^+\|u_n\|_{L^r(\Omega)}^{p-1}+n^\gamma)|A(k)|^{1-\frac{p-1}{r}-\frac{1}{p^*}}
$$
But this, for $h\geq k\geq k_0$, implies that
$$
|A(h)|\leq\frac{[n(\beta^+ \|u_n\|^{{p-1}}_{L^r(\Omega)}+n^\gamma)]^\frac{p^*}{p-1}}{(h-k)^{p^*}}|A(k)|^{\frac{p^*}{p-1}(1-\frac{p-1}{r}-\frac{1}{p^*})}
$$
Let $r>\frac{N(p-1)}{p}$. Note that 
$$
\frac{p^*}{p-1}\left(1-\frac{p-1}{r}-\frac{1}{p^*}\right)>1
$$
Thus, by \cite[Lemma 4.1]{Stampacchia1965}, one has
$$
|A(k_0+d)|=0, 
$$
where
$$
d=[n(\beta^+ \|u_n\|^{{p-1}}_{L^r(\Omega)}+n^\gamma)]^\frac{1}{p-1} |A(k_0)|^{\frac{p^*-p}{p^*(p-1)}-\frac{1}{r}} 2^{{\frac{\frac{p^*}{p-1}\left(1-\frac{p-1}{r}-\frac{1}{p^*}\right)}{\frac{p^*-p}{p-1}-\frac{p^*}{r}}}}.
$$
Hence
\begin{equation}
\label{Unb}
u_n\in L^\infty(\Omega).
\end{equation}
Note that, if $\beta^+=0$, one can prove that $u_n$ belongs to $L^\infty(\Omega)$ for any $p<N$. {Let us also observe that the restriction $p<N<\frac{p(2p-1)}{p-1}$, which is nothing but $p^*>2p-1$, is not restrictive for our purposes: for every fixed $N\geq2$ it is satisfied for all $p>1$ close enough to $1$, since $\frac{p(2p-1)}{p-1}\to+\infty$ as $p\to1^+$, and we shall only let $p\to1^+$ (see \eqref{RKey} below).} Consequently, by standard regularity theory, (see for instance \cite[Theorem 6.2.7]{GasinskiPapageorgiou2006}), one has that $u_n\in C^{1,\alpha}_0(\overline{\Omega})$ for some  $\alpha\in (0,1)$. Hence, by the strong maximum principle 
\begin{equation}
\label{UnP}
u_n>0\quad\hbox{ in }\; \Omega\quad\hbox{ for all }\; n\in \mathbb{N}.
\end{equation}
On the other hand, reasoning as in  \cite[Lemma 4.2]{AbdellaouiPeral2003}, it follows that
$$
u_{n+1}\geq u_n \quad\hbox{ in }\;\Omega\quad\hbox{ for all }\; n\in \mathbb{N}.
$$
Hence, for each $\omega\subset \subset \Omega$, there exists $c_\omega>0$ such that
\begin{equation}
\label{UnC}
u_n\geq c_\omega\qquad\hbox{ in }\; \omega\quad\hbox{ for all }\;n\in \mathbb{N}.
\end{equation}
 
}

\subsection{Estimates in the mildly singular case}
{In order to show the existence of solution and to find uniform estimates of the family of solutions $(u_p)_{p>1}$, each $u_p$ obtained of \eqref{Pp}, for the case $0<\gamma\leq1$, the following result is established.}
\begin{proposition}
\label{S}
 Let $0<\gamma\leq 1$ and  $\beta<((N-p)/p)^p$.  Let $f\in L^m(\Omega)$, $m=\left(\frac{p^*}{1-\gamma}\right)'$,  be a nonnegative function. Then there exists a solution to \eqref{Pp} {in sense of \eqref{WSp} and the following holds: if $ 0<\gamma<1 $,
 \begin{equation}
 \label{E6}
 \int_{\Omega}|\nabla u_p|^p\,dx\leq \left[\frac{\|f\|_{L^m(\Omega)}S_{N,p}^{1-\gamma}}{1-\beta^+(p/(N-p))^p}\right]^{p/(p-1+\gamma)}
 \end{equation}
 and if $\gamma=1$,
 \begin{equation}
 \label{E7}
 \int_{\Omega}|\nabla u_p|^p\,dx\leq \frac{\|f\|_{L^1(\Omega)}}{1-\beta^+(p/(N-p))^p}.
 \end{equation}
 }
\end{proposition}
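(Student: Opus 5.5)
We derive uniform-in-$n$ energy estimates for the approximate solutions $u_n$ of \eqref{AuxP}, and then pass to the limit $n\to\infty$ in \eqref{Un}, using the local positivity \eqref{UnC} to handle the singular term.

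\textbf{Energy estimate.} Take $\varphi=u_n$ in \eqref{Un}; this is allowed by density since $u_n\in W^{1,p}_0(\Omega)\cap L^\infty(\Omega)$. This gives
$$
\int_\Omega|\nabla u_n|^p\,dx=\beta\int_\Omega\frac{u_n^p}{|x|^p+1/n}\,dx+\int_\Omega\frac{f_nu_n}{(u_n+1/n)^\gamma}\,dx .
$$
The first term on the right is at most $\beta^+(p/(N-p))^p\int_\Omega|\nabla u_n|^p\,dx$ by Hardy's inequality. For the singular term we use $u_n(u_n+1/n)^{-\gamma}\le u_n^{1-\gamma}$.

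If $\gamma=1$, the singular term is bounded by $\|f\|_{L^1(\Omega)}$, and \eqref{E7} follows at once.

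If $0<\gamma<1$, apply H\"older's inequality with exponents $m$ and $m'=p^*/(1-\gamma)$:
$$
\int_\Omega f u_n^{1-\gamma}\,dx\le\|f\|_{L^m(\Omega)}\|u_n\|_{L^{p^*}(\Omega)}^{1-\gamma}\le\|f\|_{L^m(\Omega)}S_{N,p}^{1-\gamma}\|\nabla u_n\|_{L^p(\Omega)}^{1-\gamma}.
$$
Absorbing the Hardy term on the left gives $(1-\beta^+(p/(N-p))^p)\|\nabla u_n\|_p^{p}\le \|f\|_m S_{N,p}^{1-\gamma}\|\nabla u_n\|_p^{1-\gamma}$. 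Dividing and raising to the power $p/(p-1+\gamma)$ yields \eqref{E6} for $u_n$, uniformly in $n$. The restriction $\beta<((N-p)/p)^p$ is exactly what keeps the absorbed coefficient positive.

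\textbf{Passage to the limit.} By these bounds and the monotonicity $u_{n+1}\ge u_n$, there is $u_p\in W^{1,p}_0(\Omega)$ with $u_n\rightharpoonup u_p$ weakly in $W^{1,p}_0(\Omega)$, strongly in $L^q(\Omega)$ for $q<p^*$, and a.e. By weak lower semicontinuity of the norm, the estimates \eqref{E6}--\eqref{E7} pass to $u_p$. Moreover $u_p\ge u_n\ge c_\omega$ on every $\omega\subset\subset\Omega$ by \eqref{UnC}, which gives the first requirement in \eqref{WSp}.

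To pass to the limit in \eqref{Un} for $\varphi\in C^\infty_c(\Omega)$, we treat the three terms separately.

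The singular term is dominated on $\operatorname{supp}\varphi$ by $f\|\varphi\|_\infty c_\omega^{-\gamma}\in L^1$, so dominated convergence applies.

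For the Hardy term, $|u_n|^{p-2}u_n|x|^{-p}$ is bounded in $L^{p'}$ by Hardy's inequality. Hence Vitali's theorem, or weak convergence in $L^{p'}$ combined with a.e. convergence, gives convergence to $|u_p|^{p-2}u_p|x|^{-p}$.

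The principal part requires a.e. convergence of $\nabla u_n$, and this is the main obstacle. We would obtain it by the standard Boccardo--Murat argument. Test \eqref{Un} with $T_k(u_n-u_p)\phi$, with $\phi$ a cutoff. The right-hand side is bounded in $L^1_{\rm loc}$ uniformly in $n$: the singular part is controlled by $f c_\omega^{-\gamma}$ locally, and the Hardy part is locally in $L^{1}$ and equi-integrable. Hence
$$
\int\bigl(|\nabla u_n|^{p-2}\nabla u_n-|\nabla u_p|^{p-2}\nabla u_p\bigr)\cdot\nabla T_k(u_n-u_p)\,\phi\le Ck+o(1).
$$
This yields $\nabla u_n\to\nabla u_p$ a.e. Then $|\nabla u_n|^{p-2}\nabla u_n\rightharpoonup|\nabla u_p|^{p-2}\nabla u_p$ weakly in $L^{p'}$, since these fields are bounded there and converge a.e.

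Letting $n\to\infty$ in all three terms gives \eqref{WSp}, so $u_p$ is a solution of \eqref{Pp} satisfying \eqref{E6}--\eqref{E7}.
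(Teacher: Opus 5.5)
Your proof follows the same route as the paper's. You test \eqref{Un} with $u_n$, absorb the Hardy term, and use H\"older--Sobolev when $0<\gamma<1$. You then pass to weak limits, handle the singular term by dominated convergence via \eqref{UnC}, and get a.e. convergence of gradients from Boccardo--Murat.

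One justification is false as written: $|u_n|^{p-2}u_n|x|^{-p}$ is not bounded in $L^{p'}(\Omega)$. Having $|x|^{-p}\in L^{p'}$ near the origin would require $p\,p'=p^2/(p-1)<N$, and this fails for $p$ close to $1$, which is the regime of interest. Moreover $u_n\ge c_\omega>0$ near $0$ by \eqref{UnC}, so the factor $u_n^{p-1}$ does not help. Hence neither the $L^{p'}$ bound nor the ``weak convergence in $L^{p'}$'' route works.

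What Hardy's inequality actually controls is $u_n^{p-1}|x|^{-(p-1)}$ in $L^{p'}$. Pairing it with $\varphi|x|^{-1}\in L^p$, which holds since $p<N$, gives
$$\int_E\frac{u_n^{p-1}|\varphi|}{|x|^p+1/n}\,dx\le\left(\frac{p}{N-p}\right)^{p-1}\|\nabla u_n\|_{L^p(\Omega)}^{p-1}\left\|\frac{\varphi}{|x|}\right\|_{L^p(E)}.$$
This yields equi-integrability, and Vitali's theorem then applies. Even simpler, the monotonicity $u_n\le u_p$ lets you dominate the integrand by $u_p^{p-1}|\varphi|\,|x|^{-p}$, which lies in $L^1(\Omega)$ by the same H\"older estimate; the paper instead uses a Young-inequality domination. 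The same estimate also justifies your claim that the Hardy part is locally bounded in $L^1$ in the Boccardo--Murat step. With this correction the argument is complete.
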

\dem Use $u_n$ as test function in \eqref{Un} to obtain
\begin{equation}
\int_{\Omega}|\nabla u_n|^p\,dx=\beta \int_{\Omega}\frac{1}{|x|^p+1/n}|u_n|^p\,dx+\int_{\Omega}\frac{f_nu_n}{(u_n+1/n)^\gamma}\,dx.
\end{equation}
Hence, using the Hardy's inequality one has
\begin{equation}
\label{I1}
\left(1-\beta^+\left(\frac{p}{N-p}\right)^p\right)\int_{\Omega}|\nabla u_n|^p\,dx\leq\int_{\Omega}\frac{f_nu_n}{(u_n+1/n)^\gamma}\,dx.
\end{equation}
 
Let $0<\gamma<1$ and   $m=\left(\frac{p^*}{1-\gamma}\right)'$. Note that, using H\"older inequality, we have that 
\begin{equation}
\label{Est3}
\int_{\Omega}\frac{f_nu_n}{(u_n+1/n)^\gamma}\,dx\leq \|f_n\|_{L^m(\Omega)}  \left(\int_{\Omega}u_n^{p^*}\,dx\right)^{(1-\gamma)/p^*},
\end{equation}
which in \eqref{I1}, using Sobolev inequality on the left-hand side of both estimates, it allows us  to conclude that
\begin{equation}
\label{Est4}
 \left(\int_{\Omega}u_n^{p^*}\,dx\right)^{(1-\gamma)/{p^*}}\leq \left[\frac{\|f\|_{L^m(\Omega)} S_{N,p}^p}{1-\beta^+(p/(N-p))^p}\right]^{(1-\gamma)/(p-1+\gamma)}\quad\hbox{ for all }\, n\in \mathbb{N},
\end{equation}
respectively, where $S_{N,p}$ is the Sobolev constant {defined in \eqref{SobC}}. Thus, from \eqref{I1}, \eqref{Est3} and \eqref{Est4}, we obtain that
\begin{equation}
\label{Est6}
\int_{\Omega}|\nabla u_n|^p\,dx\leq  \left[\frac{\|f\|_{L^m(\Omega)} S_{N,p}^{1-\gamma}}{1-\beta^+(p/(N-p))^p}\right]^{p/(p-1+\gamma)}.
\end{equation}

In the case $\gamma=1$ the estimate is even easier as the right-hand side of  \eqref{I1} is bounded for any $f\in L^1(\Omega)$, {i.e.,
\begin{equation}
\label{Est7}
\int_{\Omega}|\nabla u_n|^p\,dx\leq\frac{\|f\|_{L^1(\Omega)}}{1-\beta^+(p/(N-p))^p}.
\end{equation}
}
Hence $u_n$ is a bounded sequence in $W^{1,p}_0(\Omega)$ and so, by   compact embeddings, there exists $u_p\in W^{1,p}_0(\Omega)$ such that, up to not relabeled subsequences, one has 
\begin{equation}
\label{CUn}
\begin{array}{lcr}
u_n\rightharpoonup u_p\ \ \hbox{ in } W^{1,p}_0(\Omega);\\
u_n\to u_p  \ \ \ \hbox{in}\ \  L^s(\Omega),\; s\in [p,p^*),\quad\hbox{a.e. in }\Omega;
\end{array}
\end{equation}
as {$n\to\infty$}. Letting $n \to \infty$ in \eqref{Est6} and \eqref{Est7} yields \eqref{E6} and \eqref{E7}, respectively. Furthermore, from \eqref{UnC}, for each $\omega \subset\subset \Omega$, there exists $c_\omega > 0$ such that
$$
u_p\geq c_\omega\qquad\hbox{ in }\;\omega.
$$

Furthermore, by  \cite[Theorem 2.1]{BoccardoMurat1992} one has that, up to subsequences, $\nabla u_n \to \nabla u_p$ a.e. in $\Omega$, and so 
\begin{equation}
\label{0}
\lim_{n\to \infty}\int_{\Omega}|\nabla u_n|^{p-2}\nabla u_n\cdot \nabla \varphi\,dx=\int_{\Omega}|\nabla u_p|^{p-2}\nabla u_p\cdot\nabla \varphi\,dx\quad\hbox{ for all }\; \varphi\in C^1_c(\Omega).
\end{equation}
For the sake of completeness let us briefly show the argument:   for $m\ge1$, we define
$$
\Omega_{\frac{1}{m}}=\left\{x\in\Omega\colon \hbox{dist}\,(x,\partial\Omega)>\frac{1}{m}\right\}.
$$  

Let $\varphi\in C_c^\infty(\Omega)$ be such that $\varphi=1$ in $\overline{\Omega}_{\frac{1}{m}}$ and $0\leq \varphi\leq 1$ in $\Omega$. Use $\varphi T_k(u_n-u_p) $ as test function in \eqref{Ppn} to obtain
\begin{align*}
&\int_{\Omega}\varphi|\nabla u_n|^{p-2}\nabla u_n\cdot \nabla T_k(u_n-u_p)\,dx\\
&={\beta}\int_{\Omega}\frac{u_n^{p-1} \varphi T_k(u_n-u_p)}{|x|^p+1/n}\,dx+\int_{\Omega}\frac{f_n\varphi T_k(u_n-u_p)}{(u_n+1/n)^\gamma}\,dx-\int_{\Omega}T_k(u_n-u_p)|\nabla u_n|^{p-2}\nabla u_n\cdot \nabla \varphi\,dx.
 \end{align*}
 
Thus, using  \eqref{CUn} one can show that
\begin{equation}
\label{A}
\int_{\Omega}\varphi|\nabla u_n|^{p-2}\nabla u_n\cdot \nabla T_k(u_n-u_p)\,dx=o_n(1).
\end{equation}

On the other hand, since $T_k(u_n-u_p)\rightharpoonup 0$ in $W^{1,p}_0(\Omega)$, one has
 \begin{equation}
 \label{B}
  \int_{\Omega}\varphi|\nabla u_p|^{p-2}\nabla u_p\cdot \nabla T_k(u_n-u_p)\,dx=o_n(1).
  \end{equation}
  
  Thus, from \eqref{A} and \eqref{B}, for $0<\theta<1$, 
  $$
  \int_{\Omega_{\frac{1}{m}}}[(|\nabla u_n|^{p-2}\nabla u_n-|\nabla u_p|^{p-2}\nabla u_p)\cdot (\nabla u_n-\nabla u_p)]^\theta\,dx=o_n(1).
  $$
  But this, in turn, implies that
 \begin{align}
 \label{Dg}
 \int_{\Omega_{\frac{1}{m}}}|\nabla u_n-\nabla u_p|^\theta\,dx&\leq \left(\int_{\Omega_{\frac{1}{m}}}\frac{|\nabla u_n-\nabla u_p|^{2\theta}}{(|\nabla u_n|+|\nabla u_p|)^{(2-p)\theta}}\,dx\right)^{\frac{1}{2}}\left(\int_{\Omega_{\frac{1}{m}}}(|\nabla u_n|+|\nabla u_p|)^{(2-p)\theta}\,dx\right)^{\frac{1}{2}}\\
 \nonumber&\leq \left(\int_{\Omega_{\frac{1}{m}}}[(|\nabla u_n|^{p-2}\nabla u_n-|\nabla u_p|^{p-2}\nabla u)\cdot(\nabla u_n-\nabla u_p)]^\theta\,dx\right)^{\frac{1}{2}}\left(\int_{\Omega_{\frac{1}{m}}}(|\nabla u_n|+|\nabla u_p|)^{(2-p)\theta}\,dx\right)^{\frac{1}{2}}\\
 \nonumber&=o_n(1).
 \end{align}
 
Consequently, from \eqref{Dg}, using a diagonal argument, we have, up to subsequences, 
 $$
 |\nabla u_n-\nabla u_p|^\theta \to 0\quad\hbox{a.e. in }\;\Omega.
 $$
Moreover, since $(\nabla u_n)$ is a bounded sequence in $(L^p(\Omega))^N$, by Vitali's theorem, one has 
$$
\nabla u_n\rightarrow \nabla u_p\quad\hbox{ in }\; (L^q(\Omega))^N\;\quad\hbox{ for all}\;q<p,
$$
as $n\to \infty$, and so, \eqref{0} holds.

In order  to prove the convergences of the terms of right-hand  side of \eqref{Un}, we shall use Lebesgue's dominated convergence theorem. 

{\bf Claim 1.} 
\begin{equation}
\label{1}
\lim_{n\to \infty}\int_{\Omega}\frac{1}{|x|^p+1/n}|u_n|^{p-2}u_n\varphi\,dx=\int_{\Omega}\frac{1}{|x|^p}|u_p|^{p-2}u_p\varphi\,dx.
\end{equation}

Since $u_n\to u_p$ in $L^q(\Omega)$ for all $q\in [p, p^*)$, up to subsequences, 
\begin{align}
&u_n(x)\to u_p(x)\quad\hbox{ a.e. }\; x\in \Omega,\\
\exists\; g\in L^q(\Omega) \;\hbox{ s.t. }&|u_n(x)|\leq g(x)\quad\hbox{ a.e. }\; x\in \Omega.
\end{align}
Thus, if we consider $1<s<\frac{N}{(p-1)(N-1)}$ and $\frac{N}{N-(p-1)(N-1)}<s'<\frac{N}{p}$, by Young's inequality, then
$$
\left|\frac{1}{|x|^p+1/n}|u_n|^{p-2}u_n\varphi\right|\leq \|\varphi\|_{\infty}\left(\frac{1}{s'}
\frac{1}{|x|^{ps'}}+\frac{1}{s}g^{(p-1)s}\right)\in L^1(\Omega).
$$
Consequently, by dominated convergence theorem, we obtain
$$
\lim_{n\to \infty}\int_{\Omega}\frac{1}{|x|^p+1/n}|u_n|^{p-2}u_n\varphi\,dx=\int_{\Omega}\frac{1}{|x|^p}|u_p|^{p-2}u_p\varphi\,dx.
$$

{\bf Claim 2.} 
\begin{equation}
\label{2}
\lim_{n\to \infty}\int_{\Omega}\frac{f_n}{(u_n+1/n)^\gamma}\varphi\,dx=\int_{\Omega}\frac{f}{u_p^\gamma}\varphi\,dx.
\end{equation}
Using \eqref{UnC} we have that 
$$
\left|\frac{f_n\varphi}{(u_n+1/n)^\gamma}\right|\leq \frac{|f||\varphi|}{( c_{\hbox{supp}\varphi})^\gamma}  \quad\hbox{ in }\; \Omega.
$$
Hence, by Lebesgue's dominated convergence theorem, Claim 2 holds.

Therefore, letting $n\to \infty$ in \eqref{Un}, from \eqref{0}, Claim 1 and Claim 2,  the limit $u_p$ is a solution to \eqref{Pp} in sense of  \eqref{WSp}.
\fim

\subsection{Estimates in the strongly singular case}
  In order to show the existence of solution for the case $\gamma>1$, the following estimates are found. {In this regime the equation has to be tested with $u^\gamma$ rather than with $u$ itself, and consequently the Hardy term has to be absorbed by a different constant; this leads to the assumption
\begin{equation}
\label{HypG}
\beta^+<\gamma\left(\frac{N-p}{\gamma+p-1}\right)^p ,
\end{equation}
which is discussed in Remark \ref{RemH} below.}

\begin{lemma} 
\label{L1}
Let $u_n$ be the solution of \eqref{Ppn} with $\gamma>1$ and $f$ be a  nonnegative function in $L^1(\Omega)$. {Assume that \eqref{HypG} holds and that $p<N<\frac{p(2p-1)}{p-1}$.}
Then $(u_n)_{n\in\mathbb{N}}$ is bounded in $W^{1,p}_{loc}(\Omega)$ and in $L^s(\Omega)$ with $s=\frac{p^*(\gamma+p-1)}{p}$.
\end{lemma}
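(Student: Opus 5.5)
The plan is to test \eqref{Un} with $\varphi=u_n^\gamma$. This is admissible: $u_n\in W^{1,p}_0(\Omega)\cap L^\infty(\Omega)$ by \eqref{Unb}, $u_n\in C^{1,\alpha}_0(\overline\Omega)$, and $\gamma>1$, so $u_n^\gamma\in W^{1,p}_0(\Omega)$. This yields
\begin{equation*}
\gamma\int_\Omega u_n^{\gamma-1}|\nabla u_n|^p\,dx=\beta\int_\Omega\frac{u_n^{\gamma+p-1}}{|x|^p+1/n}\,dx+\int_\Omega\frac{f_nu_n^\gamma}{(u_n+1/n)^\gamma}\,dx\le \beta^+\int_\Omega\frac{u_n^{\gamma+p-1}}{|x|^p}\,dx+\|f\|_{L^1(\Omega)} .
\end{equation*}
Setting $v_n=u_n^{\frac{\gamma+p-1}{p}}$, the left-hand side equals $\gamma\left(\frac{p}{\gamma+p-1}\right)^p\int_\Omega|\nabla v_n|^p\,dx$. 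The Hardy term is $\beta^+\int_\Omega v_n^p|x|^{-p}\,dx\le\beta^+\left(\frac{p}{N-p}\right)^p\int_\Omega|\nabla v_n|^p\,dx$. Therefore
\begin{equation*}
\left[\gamma\left(\frac{p}{\gamma+p-1}\right)^p-\beta^+\left(\frac{p}{N-p}\right)^p\right]\int_\Omega|\nabla v_n|^p\,dx\le\|f\|_{L^1(\Omega)},
\end{equation*}
and the bracket is positive exactly by \eqref{HypG}. Hence $(v_n)$ is bounded in $W^{1,p}_0(\Omega)$. The Sobolev inequality \eqref{SobC} then gives a bound on $\int_\Omega u_n^{p^*(\gamma+p-1)/p}\,dx$, which is the claimed $L^s$ bound with $s=\frac{p^*(\gamma+p-1)}{p}$.

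For the local bound, fix $\omega\subset\subset\Omega$ and a cutoff $\phi\in C^\infty_c(\Omega)$ with $0\le\phi\le1$ and $\phi=1$ on $\omega$. By \eqref{UnC} we have $u_n\ge c_{\rm supp\,\phi}=:c>0$ on $\operatorname{supp}\phi$. The direct route is
\begin{equation*}
\int_\omega|\nabla u_n|^p\,dx\le c^{1-\gamma}\int_\Omega u_n^{\gamma-1}|\nabla u_n|^p\,dx,
\end{equation*}
and the right-hand side is already bounded by the previous step, since $\int_\Omega u_n^{\gamma-1}|\nabla u_n|^p\,dx=\left(\frac{p}{\gamma+p-1}\right)^p\int_\Omega|\nabla v_n|^p\,dx$. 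If one prefers not to rely on the lower bound, an alternative is to test with $u_n\phi^p$ and use Young's inequality together with the $L^s$ bound, where $s\ge p$. In that case the Hardy term is handled by the Hardy inequality applied to $u_n\phi$, and the singular term is controlled by $\int_{\operatorname{supp}\phi}f\,u_n^{1-\gamma}\le c^{1-\gamma}\|f\|_{L^1(\Omega)}$.

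The main obstacle is absorbing the Hardy term with the sharp constant. This is why one must work with the power $v_n=u_n^{(\gamma+p-1)/p}$ rather than with $u_n$: only then does the constant $\gamma\left(\frac{p}{\gamma+p-1}\right)^p$ appear, so that \eqref{HypG} is precisely the coercivity condition. The remaining points are routine: the admissibility of the test function, and the fact that the bounds are uniform in $n$ because they depend only on $\|f\|_{L^1(\Omega)}$, $\beta$, $p$, $N$, $\gamma$, and, for the local bound, on the constant $c_\omega$ from \eqref{UnC}, which is itself independent of $n$ by monotonicity.
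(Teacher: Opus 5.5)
Your proof is correct. Its first half coincides with the paper's: you test with $u_n^\gamma$, apply Hardy's inequality to $v_n=u_n^{(\gamma+p-1)/p}$ (this is exactly \eqref{EstUE}), and then use Sobolev.

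For the $W^{1,p}_{\rm loc}$ bound you take a different and much shorter route. Since $\gamma>1$ and $u_n\ge c_\omega$ on $\omega$ uniformly in $n$ by \eqref{UnC}, the weighted energy $\int_\Omega u_n^{\gamma-1}|\nabla u_n|^p\,dx$ controlled by \eqref{EstUE} dominates $c_\omega^{\gamma-1}\int_\omega|\nabla u_n|^p\,dx$. The $L^p$ bound then comes for free from $s>p$. This is the classical Boccardo--Orsina trick, and it fully proves the lemma as stated (fixed $p$, uniformity in $n$).

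The paper instead tests with $\varphi^p u_n$ and absorbs the gradient terms via Young's inequality. It controls the Hardy term through the pointwise bound $u_n^p\le\frac{p}{p+\gamma-1}u_n^{p+\gamma-1}+\frac{\gamma-1}{p+\gamma-1}$ together with \eqref{EstUE}. It splits the singular term over $\{u_n>1\}$ and $\{u_n\le1\}$, bounding the latter by testing the equation with $\varphi^p$.

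What this longer argument buys is that the constant in \eqref{Et3} never involves $c_\omega$. It depends only on $\|f\|_{L^1(\Omega)}$, $d={\rm dist}(\omega,\partial\Omega)$ and structural quantities that have a finite limit as $p\to1^+$. Your constant $c_\omega^{1-\gamma}$ comes from $\inf_\omega u_1$ at the fixed exponent $p$, so it depends on $p$. Nothing prevents it from blowing up as $p\to1^+$: the limit $u$ is allowed to vanish where $f$ does, which is why $\chi_{\{u>0\}}$ appears in Definition \ref{DS}.

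The paper needs precisely this $c_\omega$-free bound in \eqref{Cal4} and \eqref{Bd}. There it yields the $BV_{\rm loc}$ compactness \eqref{u3} and the construction of ${\bf z}$, uniformly in $p$. Your alternative via $u_n\phi^p$ would not remove the dependence either, since you again bound the singular term by $c^{1-\gamma}\|f\|_{L^1(\Omega)}$.
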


{
\begin{remark}
\label{RemH}
Two comments on the assumptions of Lemma \ref{L1} are in order.

$(i)$ Assumption \eqref{HypG} is the natural counterpart, in the strongly singular regime, of the condition $\beta<\left(\frac{N-p}{p}\right)^p$ of Proposition \ref{S}, to which it reduces for $\gamma=1$; it is equivalent to the positivity of the constant appearing in \eqref{EstUE} below.  Observe that 
$$
\gamma\left(\frac{N-p}{\gamma+p-1}\right)^p<\left(\frac{N-p}{p}\right)^p ,
$$
so that \eqref{HypG} is strictly stronger than the assumption of Proposition \ref{S}, and the admissible range of $\beta$ shrinks as $\gamma$ grows. This restriction, however, disappears in the limit $p\to1^+$, which is the only regime we are interested in: indeed
\begin{equation}
\label{LimCost}
\lim_{p\to1^+}\gamma\left(\frac{N-p}{\gamma+p-1}\right)^p=\gamma\cdot\frac{N-1}{\gamma}=N-1\qquad\hbox{ for every }\;\gamma>0,
\end{equation}
independently of $\gamma$, and $N-1$ is precisely the threshold appearing in \eqref{RKey} and in Theorem \ref{mainTh}. See Figure \ref{fig1}.

$(ii)$ The restriction $p<N<\frac{p(2p-1)}{p-1}$ is only needed in order for $u_n^\gamma$ to be an admissible test function, since it guarantees $u_n\in L^\infty(\Omega)$ (see \eqref{Unb}); as observed there, it is automatically satisfied for every fixed $N$ as soon as $p$ is close enough to $1$, and it is void if $\beta^+=0$.
\end{remark}
}

\begin{figure}[ht]
\centering
\begin{tikzpicture}[x=1cm,y=1cm]
  \draw[->] (0,0) -- (10.50,0) node[below right] {\small $p$};
  \draw[->] (0,0) -- (0,6.50);
  \foreach \x/\l in {0.00/1.0, 1.67/1.1, 3.33/1.2, 5.00/1.3, 6.67/1.4, 8.33/1.5, 10.00/1.6} \draw (\x,0) -- (\x,-0.1) node[below] {\small $\l$};
  \foreach \y/\l in {0.33/1, 2.00/2, 3.67/3, 5.33/4} \draw (0,\y) -- (-0.1,\y) node[left] {\small $\l$};
  \draw[gray] (0,5.333) -- (10.00,5.333) node[right,black] {\small $N-1$};
  \draw[very thick] (0.000,5.333) -- (0.333,5.349) -- (0.667,5.361) -- (1.000,5.369) -- (1.333,5.373) -- (1.667,5.373) -- (2.000,5.369) -- (2.333,5.361) -- (2.667,5.349) -- (3.000,5.333) -- (3.333,5.313) -- (3.667,5.289) -- (4.000,5.262) -- (4.333,5.231) -- (4.667,5.197) -- (5.000,5.159) -- (5.333,5.117) -- (5.667,5.073) -- (6.000,5.025) -- (6.333,4.974) -- (6.667,4.920) -- (7.000,4.863) -- (7.333,4.803) -- (7.667,4.740) -- (8.000,4.675) -- (8.333,4.607) -- (8.667,4.537) -- (9.000,4.464) -- (9.333,4.389) -- (9.667,4.313) -- (10.000,4.234);
  \node[right] at (10.05,4.234) {\small $\gamma=1$};
  \draw[thick,dashed] (0.000,5.333) -- (0.333,5.324) -- (0.667,5.311) -- (1.000,5.295) -- (1.333,5.275) -- (1.667,5.252) -- (2.000,5.226) -- (2.333,5.197) -- (2.667,5.164) -- (3.000,5.128) -- (3.333,5.089) -- (3.667,5.047) -- (4.000,5.003) -- (4.333,4.955) -- (4.667,4.904) -- (5.000,4.851) -- (5.333,4.795) -- (5.667,4.737) -- (6.000,4.676) -- (6.333,4.613) -- (6.667,4.547) -- (7.000,4.479) -- (7.333,4.409) -- (7.667,4.338) -- (8.000,4.264) -- (8.333,4.188) -- (8.667,4.111) -- (9.000,4.032) -- (9.333,3.952) -- (9.667,3.870) -- (10.000,3.787);
  \node[right] at (10.05,3.787) {\small $\gamma=2$};
  \draw[thick,dotted] (0.000,5.333) -- (0.333,5.243) -- (0.667,5.151) -- (1.000,5.059) -- (1.333,4.965) -- (1.667,4.871) -- (2.000,4.775) -- (2.333,4.679) -- (2.667,4.582) -- (3.000,4.484) -- (3.333,4.386) -- (3.667,4.288) -- (4.000,4.188) -- (4.333,4.089) -- (4.667,3.989) -- (5.000,3.890) -- (5.333,3.790) -- (5.667,3.690) -- (6.000,3.590) -- (6.333,3.490) -- (6.667,3.390) -- (7.000,3.291) -- (7.333,3.192) -- (7.667,3.093) -- (8.000,2.995) -- (8.333,2.897) -- (8.667,2.800) -- (9.000,2.703) -- (9.333,2.607) -- (9.667,2.512) -- (10.000,2.417);
  \node[right] at (10.05,2.417) {\small $\gamma=5$};
  \draw[thick,dashdotted] (0.000,5.333) -- (0.333,5.083) -- (0.667,4.840) -- (1.000,4.605) -- (1.333,4.377) -- (1.667,4.156) -- (2.000,3.943) -- (2.333,3.736) -- (2.667,3.536) -- (3.000,3.343) -- (3.333,3.156) -- (3.667,2.976) -- (4.000,2.801) -- (4.333,2.632) -- (4.667,2.470) -- (5.000,2.312) -- (5.333,2.161) -- (5.667,2.014) -- (6.000,1.873) -- (6.333,1.737) -- (6.667,1.606) -- (7.000,1.479) -- (7.333,1.357) -- (7.667,1.240) -- (8.000,1.127) -- (8.333,1.018) -- (8.667,0.913) -- (9.000,0.813) -- (9.333,0.716) -- (9.667,0.623) -- (10.000,0.533);
  \node[right] at (10.05,0.533) {\small $\gamma=20$};
\end{tikzpicture}
\caption{The map $p\mapsto\gamma\left(\frac{N-p}{\gamma+p-1}\right)^{p}$ appearing in \eqref{HypG}, for $N=5$ and $\gamma=1,2,5,20$ (the case $\gamma=1$ being the classical Hardy constant $\left(\frac{N-p}{p}\right)^{p}$). For $p>1$ the admissible range of $\beta$ shrinks as $\gamma$ grows, but all the curves converge to the same value $N-1$ as $p\to1^{+}$: this is why the threshold in Theorem \ref{mainTh} does not depend on $\gamma$.}
\label{fig1}
\end{figure}

\dem
Use $u_n^\gamma$ as test function in \eqref{Ppn} to obtain 
$$
\gamma \left(\frac{p}{\gamma+p-1}\right)^p\int_{\Omega}|\nabla u_n^{(\gamma+p-1)/p}|^p\,dx=\beta\int_{\Omega}\frac{u_n^{\gamma+p-1}}{|x|^p+1/n}\,dx+\int_{\Omega}\frac{f_nu_n^\gamma}{(u_n+1/n)^\gamma}\,dx.
$$
Hence, by Hardy's inequality, we get
\begin{equation}
\label{EstUE}
{\left(\frac{p}{N-p}\right)^p\left[\gamma \left(\frac{N-p}{\gamma+p-1}\right)^p-\beta^+\right]}\int_{\Omega}|\nabla u_n^{(\gamma+p-1)/p}|^p\,dx\leq \int_{\Omega}f\,dx,
\end{equation}
 {observe  that the constant on the left-hand side is positive   because of \eqref{HypG};  this constant is nothing but $\gamma\left(\frac{p}{\gamma+p-1}\right)^p-\beta^+\left(\frac{p}{N-p}\right)^p$, and it will be used in this second form in the computations below.} But \eqref{EstUE}  implies that, by Sobolev inequality, $(u_n)_{n\in\mathbb{N}}$ is bounded in $L^s(\Omega)$.

In order to prove that $(u_n)_{n\in\mathbb{N}}$ is bounded in $W^{1,p}_{loc}(\Omega)$, observe that, by the H\"{o}lder and the Sobolev inequalities and using \eqref{EstUE}, one has
\begin{align}
\label{Et2}
\nonumber\int_{\Omega}u_n^p\,dx&\leq\left(\int_{\Omega}u_n^{p^*(p+\gamma-1)/p}\,dx\right)^{p^2/(p^*(p+\gamma-1))}|\Omega|^{(N(\gamma-1)+p^2)/(N(p+\gamma-1))}\\
&\leq \left(S_{N,p}^p\int_{\Omega}|\nabla u_n^{(\gamma+p-1)/p}|^p\,dx\right)^{p/(\gamma+p-1)}|\Omega|^{(N(\gamma-1)+p^2)/(N(p+\gamma-1))}\\
\nonumber&\leq \left(\frac{S_{N,p}^p\|f\|_{L^1(\Omega)}}{\gamma\left(\frac{p}{\gamma+p-1}\right)^p-\beta^+\left(\frac{p}{N-p}\right)^p}\right)^{p/(\gamma+p-1)}|\Omega|^{(N(\gamma-1)+p^2)/(N(p+\gamma-1))}.
\end{align}
Regarding the gradient term of $u_n$, first we choose $\varphi \in C^1_c(\Omega)$ such that $0\leq \varphi\leq 1$, $\varphi=1$ in $\omega\subset\subset \Omega$ and $|\nabla \varphi|\leq 2/d$, where $d=\hbox{dist}\,(\omega, \partial\Omega)$ (this is always possible, see for instance   \cite[page 185]{GilbargTrudinger1998}). Use $\varphi^p u_n$ as test function in \eqref{Ppn} to obtain
\begin{align}
\label{Eq2}\int_{\Omega}\varphi^p|\nabla u_n|^p\,dx&=\beta\int_{\Omega}\frac{(u_n\varphi)^p}{|x|^p+1/n}\,dx+\int_{\Omega}\frac{f_nu_n\varphi^p}{(u_n+1/n)^\gamma}-p\int_{\Omega}u_n\varphi^{p-1}|\nabla u_n|^{p-2}\nabla u_n\cdot \nabla \varphi\,dx\\
\nonumber&\leq|\beta|\int_{\Omega}\frac{u_n^p}{|x|^p}\,dx+\int_{\Omega}\frac{f_nu_n\varphi^p}{(u_n+1/n)^\gamma}+\frac{2p}{d}\int_{\Omega}(\varphi|\nabla u_n|)^{p-1} u_n\,dx.
\end{align}
We estimate each term on the second right-hand side above. By Young's and Hardy's inequalities and \eqref{EstUE}, we get
\begin{align}
\label{Cal}
\nonumber\int_{\Omega}\frac{u_n^p}{|x|^p}\,dx&\leq \frac{p}{p+\gamma-1}\int_{\Omega}\frac{u_n^{p+\gamma-1}}{|x|^p}\,dx+\frac{\gamma-1}{p+\gamma-1}\int_{\Omega}\frac{1}{|x|^p}\,dx\\
&\leq \left(\frac{p}{p+\gamma-1}\right)\left(\frac{p}{N-p}\right)^p\int_{\Omega}|\nabla u_n^{(p+\gamma-1)/p}|^p\,dx+\frac{\gamma-1}{p+\gamma-1}\int_{\Omega}\frac{1}{|x|^p}\,dx\\
\nonumber&\leq \left(\frac{p}{p+\gamma-1}\right)\left(\frac{p}{N-p}\right)^p\frac{\|f\|_{L^1(\Omega)}}{\gamma\left(\frac{p}{p+\gamma-1}\right)^p-\beta^+\left(\frac{p}{N-p}\right)^p}+\frac{\gamma-1}{p+\gamma-1}\int_{\Omega}\frac{1}{|x|^p}\,dx.
\end{align}
We write
$$
\int_{\Omega}\frac{f_nu_n\varphi^p}{(u_n+1/n)^\gamma}\,dx=\int_{\{u_n>1\}}\frac{f_nu_n\varphi^p}{(u_n+1/n)^\gamma}\,dx+\int_{\{u_n\leq 1\}}\frac{f_nu_n\varphi^p}{(u_n+1/n)^\gamma}\,dx.
$$
Note that, in the first term on right-hand side above, one has
\begin{equation}
\label{Cal1}
\int_{\{u_n>1\}}\frac{f_nu_n\varphi^p}{(u_n+1/n)^\gamma}\,dx\leq \|f\|_{L^1(\Omega)};
\end{equation}
 in the second  we use  a similar argument to the one  used for  \eqref{Cal};  we use   $u_n\varphi^p $ as test function in \eqref{Ppn} and,  for each $M>0$, we also use Young's inequality  to  obtain 
\begin{align}
\label{Cal2}
\nonumber\int_{\{u_n\leq 1\}}&\frac{f_nu_n\varphi^p}{(u_n+1/n)^\gamma}\,dx\leq  \int_{\Omega}|\nabla u_n|^{p-2}\nabla u_n\cdot\nabla \varphi^p\,dx-\beta\int_{\Omega}\frac{u_n^{p-1}\varphi^p}{|x|^p+1/n}\,dx\\
\nonumber&\leq  \frac{p}{M^{1/(p-1)}}\int_{\Omega}\varphi^p|\nabla u_n|^p\,dx+pM\int_{\Omega}|\nabla \varphi|^p\,dx+|\beta|\int_{\Omega}\frac{u_n^{p-1}}{|x|^p+1/n}\,dx\\
&\leq   \frac{p}{M^{1/(p-1)}}\int_{\Omega}\varphi^p|\nabla u_n|^p\,dx+pM|\Omega|\left(\frac{2}{d}\right)^p\\
\nonumber&\quad + |\beta|\left(\left(\frac{p-1}{p+\gamma-1}\right)\left(\frac{p}{N-p}\right)^p\frac{\|f\|_{L^1(\Omega)}}{\gamma\left(\frac{p}{p+\gamma-1}\right)^p-\beta^+\left(\frac{p}{N-p}\right)^p}+ \frac{\gamma}{p+\gamma-1}\int_{\Omega}\frac{1}{|x|^p}\,dx\right).
\end{align}

On the other hand, by the  Young inequality and \eqref{Et2}, for each $M>0$, one has
\begin{align}
\label{Cal3}
\nonumber\int_{\Omega}(\varphi|\nabla u_n|)^{p-1}u_n\,dx&\leq \frac{1}{M^{1/(p-1)}}\int_{\Omega}\varphi^p|\nabla u_n|^p\,dx+M\int_{\Omega} u_n^p\,dx\\
&\leq  \frac{1}{M^{1/(p-1)}}\int_{\Omega}\varphi^p|\nabla u_n|^p\,dx\\
\nonumber&\quad+M\left(\frac{S_{N,p}^p\|f\|_{L^1(\Omega)}}{\gamma\left(\frac{p}{\gamma+p-1}\right)^p-\beta^+\left(\frac{p}{N-p}\right)^p}\right)^{p/(\gamma+p-1)}|\Omega|^{(N(\gamma-1)+p^2)/(N(p+\gamma-1))}.
\end{align}
Thus, from \eqref{Cal}-\eqref{Cal3} in \eqref{Eq2}, for $M$ sufficiently large, we obtain
\begin{align}
\label{Et3}
&\int_{\Omega}\varphi^p|\nabla u_n|^p\,dx\leq \left\{\|f\|_{L^1(\Omega)}\left( \frac{\frac{(2p-1)|\beta|}{p+\gamma-1}\left(\frac{p}{N-p}\right)^p}{\gamma\left(\frac{p}{p+\gamma-1}\right)^p-\beta^+\left(\frac{p}{N-p}\right)^p}+1\right)+\frac{(2\gamma-1)|\beta|}{p+\gamma-1}\int_{\Omega}\frac{1}{|x|^p}\,dx+pM|\Omega|\left(\frac{2}{d}\right)^p\right.\\
\nonumber&\left.+\frac{2pM}{d}\left(\frac{S_{N,p}^p\|f\|_{L^1(\Omega)}}{\gamma\left(\frac{p}{\gamma+p-1}\right)^p-\beta^+\left(\frac{p}{N-p}\right)^p}\right)^{p/(\gamma+p-1)}|\Omega|^{(N(\gamma-1)+p^2)/(N(p+\gamma-1))}\right\}\times\left(\frac{dM^{1/(p-1)}}{dM^{1/(p-1)}-p(d+2)}\right),
\end{align}
which together with \eqref{Et2} implies that $(u_n)_{n\in\mathbb{N}}$ is bounded in $W^{1,p}_{loc}(\Omega)$ as desired.

\fim

\begin{proposition}
\label{Sl}
{Let $\gamma>1$ and assume that \eqref{HypG} holds and that $p<N<\frac{p(2p-1)}{p-1}$.} Let $f$ be a nonnegative measurable function in $L^1(\Omega)$. Then there exists a solution $u\in W^{1,p}_{loc}(\Omega)$ of \eqref{Pp} such that $u^{(\gamma+p-1)/p}\in W^{1,p}_0(\Omega)$ and
\begin{equation}
\label{Estp}
\int_{\Omega}|\nabla u^{(\gamma+p-1)/p}_p|^p\,dx\leq {\left(\frac{N-p}{p}\right)^p\frac{\|f\|_{L^1(\Omega)}}{\gamma\left(\frac{N-p}{\gamma+p-1}\right)^p-\beta^+}}.
\end{equation}
\end{proposition}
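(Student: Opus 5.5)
We will obtain the solution $u=u_p$ as the limit, as $n\to\infty$, of the approximating solutions $u_n$ of \eqref{Ppn} built above. This mirrors the proof of Proposition \ref{S}, with the global $W^{1,p}_0$ bound replaced by the bounds of Lemma \ref{L1}. We have $u_n\in W^{1,p}_0(\Omega)\cap L^\infty(\Omega)$ by \eqref{Unb}, and $u_n^\gamma$ is an admissible test function. By \eqref{EstUE}, $v_n:=u_n^{(\gamma+p-1)/p}$ is bounded in $W^{1,p}_0(\Omega)$ with
$$\int_\Omega|\nabla v_n|^p\,dx\le \left(\frac{N-p}{p}\right)^p\frac{\|f\|_{L^1(\Omega)}}{\gamma\left(\frac{N-p}{\gamma+p-1}\right)^p-\beta^+},$$
which is positive by \eqref{HypG}. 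Lemma \ref{L1} also gives boundedness of $(u_n)$ in $W^{1,p}_{loc}(\Omega)$ and in $L^s(\Omega)$ with $s=p^*(\gamma+p-1)/p$. We will extract a subsequence with $u_n\rightharpoonup u_p$ in $W^{1,p}(\omega)$ for every $\omega\subset\subset\Omega$ (diagonal argument over an exhaustion), $u_n\to u_p$ a.e. and in $L^q(\Omega)$ for $q<s$, and $v_n\rightharpoonup v$ in $W^{1,p}_0(\Omega)$. The a.e. convergence identifies $v=u_p^{(\gamma+p-1)/p}$, so this power lies in $W^{1,p}_0(\Omega)$. Weak lower semicontinuity of the norm then gives \eqref{Estp}. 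The monotonicity $u_{n+1}\ge u_n$ together with \eqref{UnC} passes to the limit and gives $u_p\ge c_\omega>0$ on each $\omega\subset\subset\Omega$.

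To pass to the limit in \eqref{Un} we fix $\varphi\in C^1_c(\Omega)$ and work on a neighbourhood $\omega\supset\operatorname{supp}\varphi$. The key step is the a.e. convergence of the gradients $\nabla u_n\to\nabla u_p$, which we will prove locally as in the Boccardo--Murat argument of Proposition \ref{S}. We test \eqref{Ppn} with $\psi T_k(u_n-u_p)$, where $\psi\in C^\infty_c(\Omega)$ is a cutoff equal to $1$ on $\omega$. The right-hand side tends to zero for the following reasons:
\begin{itemize}
\item on $\operatorname{supp}\psi$ the singular term is dominated by $f\,c_{\operatorname{supp}\psi}^{-\gamma}\in L^1$, and $T_k(u_n-u_p)\to0$ a.e. and boundedly;
\item the Hardy term is handled as in Claim 1, using the $L^s$ bound, since $s\ge p^*>p$ allows the same Young-inequality domination by $|x|^{-ps'}+g^{(p-1)s}$;
\item the term containing $\nabla\psi$ vanishes by the local $W^{1,p}$ bound and the strong $L^q_{loc}$ convergence.
\end{itemize}
The $\theta$-power trick \eqref{Dg} then yields $\nabla u_n\to\nabla u_p$ a.e. in $\omega$. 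Vitali's theorem gives $|\nabla u_n|^{p-2}\nabla u_n\to|\nabla u_p|^{p-2}\nabla u_p$ in $L^1(\omega)$, so the left-hand side converges.

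The right-hand side of \eqref{Un} converges by dominated convergence, exactly as in Claims 1 and 2. We use the domination just described for the Hardy term and the uniform positivity \eqref{UnC} for $f_n(u_n+1/n)^{-\gamma}\varphi$. This yields \eqref{WSp}, together with $u_p\in W^{1,p}_{loc}(\Omega)$ and the local positivity. The main obstacle is the gradient a.e. convergence, because only local $W^{1,p}$ bounds are available. The care required is in choosing the cutoff so that all boundary-free estimates of Lemma \ref{L1} apply uniformly in $n$, and in using the weak convergence $T_k(u_n-u_p)\rightharpoonup0$ only in $W^{1,p}_{loc}$, which suffices because $\psi$ has compact support.
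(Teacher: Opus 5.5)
Your proposal is correct and follows essentially the same route as the paper: you extract $u_p$ from the local bounds of Lemma \ref{L1}, pass to the limit in \eqref{Un} via the Boccardo--Murat a.e. gradient convergence and dominated convergence (using \eqref{UnC}), and obtain \eqref{Estp} by weak lower semicontinuity from the bound \eqref{EstUE} on $u_n^{(\gamma+p-1)/p}$. You are in fact more explicit than the paper on two points: you localize the gradient-convergence argument with a cutoff, and you identify the weak limit of $u_n^{(\gamma+p-1)/p}$ through a.e. convergence.
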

\dem First of all, note that, by Lemma \ref{L1}, there exists $u_p\in W^{1,p}_{loc}(\Omega)$ such that, up to subsequences,
\begin{align}
\label{ConvLoc}
u_n\rightharpoonup u_p\quad\hbox{ in }\; W^{1,p}_{loc}(\Omega),\\
\nonumber u_n\to u_p\quad\hbox{ in } L^q_{loc}(\Omega),\; q\in [p, p^*),\quad\hbox{ a.e. in }\; \Omega,
\end{align}
as $n\to \infty$.

We shall prove that $u_p$ is a solution to \eqref{Pp} in sense of \eqref{WSp}. In order to do so, we pass to limit in the following identity
\begin{equation}
\label{Estn}
\int_{\Omega}|\nabla u_n|^{p-2}\nabla u_n\cdot\nabla \varphi\,dx=\beta\int_{\Omega}\frac{u_n^{p-1}\varphi}{|x|^p+1/n}\,dx+\int_{\Omega}\frac{f_n\varphi}{(u_n+1/n)^\gamma}\,dx\quad\hbox{for all}\;\varphi\in C^1_c(\Omega).
\end{equation}

Note that, arguing as in \eqref{0}, one can prove 
\begin{equation}
\label{C-1}
\lim_{n\to \infty}\int_{\Omega}|\nabla u_n|^{p-2}\nabla u_n\cdot \nabla \varphi\,dx=\int_{\Omega}|\nabla u_p|^{p-2}\nabla u_p\cdot\nabla \varphi\,dx\quad\hbox{ for all }\; \varphi\in C^1_c(\Omega)
\end{equation}
and, since $u_n\rightarrow u_p$ a.e. in $\Omega$, $u_n(x)\leq u_p(x)$ a.e. $x\in \Omega$ and, for all $\omega\subset\subset\Omega$, there exists $c_\omega$ such that $u_n\geq c_\omega$ in $\omega$, one has, by Lebesgue's dominated convergence theorem, 
\begin{equation}
\label{C-2}
\lim_{n\to\infty}\left(\beta\int_{\Omega}\frac{u_n^{p-1}\varphi}{|x|^p+1/n}\,dx+\int_{\Omega}\frac{f_n\varphi}{(u_n+1/n)^\gamma}\,dx\right)=\beta\int_{\Omega}\frac{u_p^{p-1}\varphi}{|x|^p}\,dx+\int_{\Omega}\frac{f\varphi}{u_p^\gamma}\,dx.
\end{equation}

Letting $n\to \infty$ in \eqref{Estn}, by \eqref{C-1} and \eqref{C-2}, we obtain 
\begin{align}
&\forall\, \omega\subset\subset \Omega,\quad\exists\; c_\omega>0\colon u_p\geq c_\omega\quad\hbox{ in }\; \omega,\\
&\int_{\Omega}|\nabla u_p|^{p-2}\nabla u_p\cdot \nabla \varphi\,dx=\beta\int_{\Omega}\frac{u_p^{p-1}\varphi}{|x|^p}\,dx+\int_{\Omega}\frac{f\varphi}{u_p^\gamma}\,dx\quad\hbox{ for all}\;\varphi\in C^1_c(\Omega).
\end{align}

On the other hand, from  \eqref{EstUE}, it follows that $(u_n^{(p+\gamma-1)/p})_{n\in\mathbb{N}}$ is a bounded sequence in $W^{1,p}_0(\Omega)$ and so, up to subsequences,
\begin{align}
\label{CuG}
&u_n^{(\gamma+p-1)/p}\rightharpoonup u^{(\gamma+p-1)/p}_p\quad\hbox{ in }\; W^{1,p}_0(\Omega),\\
\nonumber&u_n^{(\gamma+p-1)/p}\rightarrow u^{(\gamma+p-1)/p}_p\quad\hbox{ in }\; L^q(\Omega),\; q\in [p,p^*),
\end{align}
as $n\to\infty$. But this, again in \eqref{EstUE}, implies that
$$
\left[\gamma \left(\frac{p}{\gamma+p-1}\right)^p-\beta^+\left(\frac{p}{N-p}\right)^p\right]\int_{\Omega}|\nabla u^{(\gamma+p-1)/p}_p|^p\,dx\leq \int_{\Omega}f\,dx,
$$
as desired.
\fim

By appealing to an argument in  \cite{BoccardoOrsinaPeral2006, OrtizChata2025}, we establish the following quite surprising result:

\begin{proposition}
\label{Unbdd}
{Let $\beta>0$ and assume that $\beta<\left(\frac{N-p}{p}\right)^p$ if $0<\gamma\leq1$, and that \eqref{HypG} holds if $\gamma>1$.} Then the solutions $u_p$ found in Proposition \ref{S}  and Proposition \ref{Sl} are unbounded in $\Omega$. 
\end{proposition}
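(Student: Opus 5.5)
We argue by contradiction, following the idea of \cite{BoccardoOrsinaPeral2006, OrtizChata2025}: a positive supersolution of an equation with a Hardy potential $\beta|x|^{-p}u^{p-1}$, $\beta>0$, has to blow up at the origin at least like the radial profile $|x|^{-\alpha}$. Here $\alpha>0$ is the smaller root of
$$
\alpha^{p-1}\big(N-p-(p-1)\alpha\big)=\beta .
$$
This root is positive exactly because $0<\beta<\left(\frac{N-p}{p}\right)^p$. Under \eqref{HypG} that bound also holds, since $\gamma\left(\frac{N-p}{\gamma+p-1}\right)^p<\left(\frac{N-p}{p}\right)^p$ by Remark \ref{RemH}.

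Suppose $u_p\in L^\infty(\Omega)$. Fix a ball $B_R=B_R(0)\subset\subset\Omega$. By the positivity result in Proposition \ref{S} and Proposition \ref{Sl} there is $c_R>0$ with $u_p\geq c_R$ in $B_R$. Since $f u_p^{-\gamma}\geq0$, $u_p$ is a weak supersolution of
$$
-\Delta_p v=\frac{\beta}{|x|^p}v^{p-1}\quad\hbox{ in }\;B_R ,
$$
and we keep the weak formulation \eqref{WSp}, extended by density to nonnegative test functions in $W^{1,p}_0(B_R)\cap L^\infty$.

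The quickest route to a contradiction uses a Picone-type identity, in the spirit of the D\'iaz--Saa argument already invoked in the paper. Take $\phi\in C^\infty_c(B_R)$ and test with $\phi^p/u_p^{p-1}$, which is admissible because $u_p\geq c_R$ on $B_R$. Picone's inequality then gives
$$
\beta\int_{B_R}\frac{\phi^p}{|x|^p}\,dx\leq\int_{B_R}|\nabla u_p|^{p-2}\nabla u_p\cdot\nabla\Big(\frac{\phi^p}{u_p^{p-1}}\Big)\,dx\leq\int_{B_R}|\nabla\phi|^p\,dx ,
$$
so the bound $u_p\in L^\infty$ alone is not enough to conclude. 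The decisive comparison step is the following. Let $w=|x|^{-\alpha}$, which solves $-\Delta_p w=\beta|x|^{-p}w^{p-1}$ on $B_R\setminus\{0\}$. Set $w_\varepsilon=\varepsilon\,(|x|^{-\alpha}-R^{-\alpha})$, or a suitable truncated version of it. The aim is a comparison principle on $B_R$, via D\'iaz--Saa or via testing with $(w_\varepsilon^p-u_p^p)^+/w_\varepsilon^{p-1}$, yielding $u_p\geq c\,|x|^{-\alpha}$ near $0$. That lower bound contradicts boundedness.

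Concretely, first compare on the annuli $B_R\setminus B_\delta$. There $w_\varepsilon$ is bounded, and for $\varepsilon$ small in terms of $c_R$ and $\delta$ we have $w_\varepsilon\leq u_p$ on $\partial B_\delta$, so comparison is possible. The real content is to remove the dependence on $\delta$. The approach I would try first is the ``$\varepsilon$-independent'' argument of \cite{BoccardoOrsinaPeral2006}. Compare $u_p$ with the solution $v$ of $-\Delta_p v=\beta\,\min\{|x|^{-p},k^p\}\,v^{p-1}+\mu$, with $\mu=\min(f,1)c^{-\gamma}\chi_{B_R}$ a fixed nonnegative nontrivial datum, on $B_R$ with zero boundary values. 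Using $\|u_p\|_\infty$ one gets $f u_p^{-\gamma}\geq \|u_p\|_\infty^{-\gamma} f$, which is a fixed nontrivial source provided $f\not\equiv0$ near a set of positive measure. Then show $v_k\uparrow\infty$ near $0$ as $k\to\infty$, using the lower bound $v_k\geq c\,|x|^{-\alpha}$ that comes from the radial ODE with Hardy potential. The monotonicity $v_k\leq u_p$ follows from the weak comparison principle for the truncated problem, which applies because the truncated potential is bounded and $\beta<$ the Hardy constant.

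The main obstacle is this uniform lower bound. One must show, either by a radial supersolution argument or by an $L^1$ estimate from testing with $|x|^{-\alpha}$-type weights, that any positive supersolution with a nontrivial nonnegative source satisfies $\int_{B_r}u_p^{p-1}|x|^{-p}$ large enough to be incompatible with $u_p\in L^\infty$. Equivalently, one must show $u_p\geq c|x|^{-\alpha}$ on a punctured neighbourhood of $0$. I expect to need the positivity of $\min(f,1)$ on some set, implicitly assumed as $f\not\equiv0$, together with the strict inequality $\beta>0$, which makes $\alpha>0$.
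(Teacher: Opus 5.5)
There is a genuine gap. The step you yourself call the main obstacle is a lower bound for $u_p$ near the origin that does not degenerate with an auxiliary parameter. You never prove it, and neither route you sketch can supply it.

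\begin{itemize}
\item On the annuli $B_R\setminus B_\delta$, the requirement $w_\varepsilon\le u_p$ on $\partial B_\delta$ forces $\varepsilon\lesssim c_R\delta^{\alpha}$. So the bound $u_p\ge w_\varepsilon$ collapses as $\delta\to0$. Moreover, comparison for $-\Delta_p v-\beta|x|^{-p}v^{p-1}$, with its $(p-1)$-homogeneous zero-order term, is itself a nontrivial D\'iaz--Saa/Picone-type statement when $p\neq2$.
\item The truncated scheme is circular. For fixed $k$ the potential $\beta\min\{|x|^{-p},k^p\}$ and the source $\mu$ are bounded, so each $v_k$ is bounded and cannot satisfy $v_k\ge c|x|^{-\alpha}$. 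For $\sup_k v_k$, that bound is exactly what you are trying to prove for $u_p$. This route also needs $f\not\equiv0$ inside $B_R$, which is not among the hypotheses.
\end{itemize}

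The missing observation is that you need neither power-type blow-up nor any comparison principle involving the Hardy operator.

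Since $\beta>0$ and $u_p\ge c_R$ on $B_R$, dropping $fu_p^{-\gamma}\ge0$ gives $-\Delta_p u_p\ge\beta c_R^{p-1}|x|^{-p}$ in $B_R$. This is a fixed source, independent of $u_p$. A direct computation gives $-\Delta_p\ln(1/|x|)=(N-p)|x|^{-p}$. Hence $w=c_R\left(\frac{\beta}{N-p}\right)^{1/(p-1)}\ln\frac{R}{|x|}$ solves $-\Delta_p w=\beta c_R^{p-1}|x|^{-p}$ in $\mathcal{D}'(B_R)$. It lies in $W^{1,p}_0(B_R)$ because $p<N$, and $(w-u_p)^+$ vanishes near $\partial B_R$. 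Test with $(w-u_p)^+$ and use the strict monotonicity of $\xi\mapsto|\xi|^{p-2}\xi$, i.e.\ the plain comparison principle for $-\Delta_p$. This gives $u_p\ge w$, which is unbounded at the origin, and no contradiction argument is needed.

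This is the paper's proof, carried out on the approximations:
\begin{itemize}
\item it uses $u_n\ge c$ in $B_\delta(0)$ from \eqref{UnC};
\item it compares $u_n$ with the explicit radial solution $v_n$ of $-\Delta_p v=\beta c^{p-1}\left(\frac{1}{|x|^p+1/n}-\frac{1}{\delta^p+1/n}\right)$ in $B_\delta(0)$, with $v_n=0$ on $\partial B_\delta(0)$;
\item it lets $n\to\infty$ and uses $u_p=\sup_n u_n$ to obtain $u_p\ge c\left(\frac{\beta p}{(N-p)N}\right)^{1/(p-1)}\ln\frac{\delta}{|x|}$.
\end{itemize}
The sharper profile $|x|^{-\alpha}$ you aim for is the expected behaviour, but proving it takes a genuinely harder argument, and logarithmic growth already suffices.
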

\dem
 Let $u_n$ be a solution of \eqref{Ppn}. Note that, by \eqref{UnC}, there exists $c>0$ such that
$$
u_n\geq c\quad\hbox{ in }\; B_{\delta}(0)\quad\hbox{ for all }\;n\in \mathbb{N}\quad\hbox{ for some }\;\delta>0.
$$
Moreover, by weak maximum principle one has
\begin{equation}
\label{wmp}
u_n\geq v_n\quad\hbox{ in }\; B_{\delta}(0)\quad\hbox{ for all }\;n\in\mathbb{N},
\end{equation}
where $v_n$ is the solution of the following problem 
\begin{equation}
\left\{
\begin{array}{rclr}
-\Delta_p v&=& \dys \beta c^{p-1}\left(\frac{1}{|x|^p+1/n}-\frac{1}{\delta ^{p}+1/n}\right)&\hbox{ in }\;B_\delta(0)\\
v&=&0&\hbox{on}\;\partial B_\delta(0).
\end{array}
\right.
\end{equation}
Note that the solution is unique and radial decreasing for this problem,
$$
v_n(x)=c\beta^{1/(p-1)}\int_{|x|}^{\delta}\frac{1}{t^{(N-1)/(p-1)}}\left(\int_{0}^{t}s^{N-1}\left(\frac{1}{s^p+1/n}-\frac{1}{\delta^p+1/n}\right)\,ds\right)^{1/(p-1)}\,dt
$$
and, letting $n\to\infty$, one has
\begin{align}
\liminf_{n\to\infty}v_n(x)&\geq c\beta^{1/(p-1)}\int_{|x|}^{\delta}\frac{1}{t^{(N-1)/(p-1)}}\left(\int_{0}^{t}s^{N-1}\left(\frac{1}{s^p}-\frac{1}{\delta^p}\right)\,ds\right)^{1/(p-1)}\,dt\\
&\geq c\left(\frac{\beta p}{(N-p)N}\right)^{1/(p-1)}(\ln \delta-\ln|x|).
\end{align}

Consequently, passing to the limit in \eqref{wmp}, we have
\begin{equation}
\label{EUp}
u_p(x)\geq c\left(\frac{\beta p}{(N-p)N}\right)^{1/(p-1)}(\ln \delta-\ln|x|)\quad\hbox{ a.e. }\;x\in B_\delta(0),
\end{equation}
since $u_p(x)=\sup_{n\in\mathbb{N}}u_n(x)\quad\hbox{a.e. }\; x\in B_\delta(0)$.
\fim

\begin{remark}
Observe that, if $0<\beta<N-1$ and so, $0<\beta<((N-p)/p)^p$ for $p>1$ small enough, one has 
$$
\lim_{p\to 1^+} \left(\frac{\beta p}{(N-p)N}\right)^{1/(p-1)}=0; 
$$
therefore even if  $\lim_{p\to 1^+}u_p(x)$ exists a.e. $x\in B_\delta(0)$, we could not  conclude that this limit $u$ is unbounded by \eqref{EUp}. In Section \ref{Br} we shall prove that this limit is indeed bounded in $\Omega$.
\end{remark}

\section{{The limit as $p\to1^+$}}
\label{PT}
 
{The asymptotic analysis of the family $(u_p)_{p>1}$, which is the core of the proof of Theorem \ref{mainTh}, will be divided into various lemmas.} The overall procedure is  inspired by  \cite{DeCiccoGiachettiSegura2019,dgop} and \cite{OrtizPetitta2024}; although, the arguments are different in some crucial technical points that   we shall highlight.

 First of all,  since $\lambda<N-1$, there exists $\bar p>1$ such that
\begin{equation}\label{RKey}
\lambda<\left(\frac{N-p}{p}\right)^p\quad\hbox{ if }\; 0<\gamma\leq 1\quad \hbox{ and }\quad \gamma \left(\frac{p}{\gamma+p-1}\right)^p-\lambda^+\left(\frac{p}{N-p}\right)^p>0\quad\hbox{if}\;\gamma>1,
\end{equation}
for all $p\in (1,\bar p]$. Thus, we may consider $\lambda$ instead of $\beta$ in  the  $p$-Laplace problem \eqref{Pp}. We also consider 
$$
f\in L^m(\Omega),\quad m=\left(\frac{p^*}{1-\gamma}\right)',
$$
since $L^{N,\infty}(\Omega)\subset L^m(\Omega)$ for $ m=\left(\frac{p^*}{1-\gamma}\right)'<N$ if $\gamma<1$ and $f\in L^{1}(\Omega)$ for $\gamma\geq 1$.

For $0<\gamma\leq1$,  from \eqref{Est6}, the following estimate holds 
\begin{equation}
\label{En}
\int_{\Omega}|\nabla u_n|^p\,dx\leq \left[\frac{\|f\|_m S_{N,p}^{1-\gamma}}{1-\lambda^+(p/(N-p))^p}\right]^{p/(p-1+\gamma)}\quad\hbox{ for all } \;p\in (1,\bar p].
\end{equation}
Letting $n\to\infty$, by \eqref{CUn}, we have that
\begin{equation}
\label{Ep}
\int_{\Omega}|\nabla u_p|^p\,dx\leq \left[\frac{\|f\|_m S_{N,p}^{1-\gamma}}{1-\lambda^+(p/(N-p))^p}\right]^{p/(p-1+\gamma)}\quad\hbox{ for all } \;p\in (1,\bar p].
\end{equation}
But this, in turn, as $u_p=0$ on $\partial\Omega$, implies that
$$
\int_{\Omega}|\nabla u_p|\,dx+\int_{\partial\Omega}|u_p|\,d\mathcal{H}^{N-1}\leq\frac{1}{p}\left[\frac{\|f\|_m S_{N,p}^{1-\gamma}}{1-\lambda^+(p/(N-p))^p}\right]^{p/(p-1+\gamma)}+\frac{p-1}{p}|\Omega|,
$$
for all $p\in (1,\bar p]$. Hence $u_p$ is a bounded sequence in $BV(\Omega)$ and so, by compact embedding, up to subsequences,
\begin{eqnarray}
\label{u2}
\nonumber \nabla u_p\rightharpoonup Du&*-\hbox{ weak in }\;BV(\Omega),\\
u_p\to u&\hbox{ in }\; L^q(\Omega)\; \hbox{ for all }\; q\in [1, 1^*),\quad\hbox{ a.e. in }\;\Omega,\\
\nonumber u_p\rightharpoonup u&\hbox{ in}\; L^{1^*}(\Omega),
\end{eqnarray}
as $p\to 1^+$.

 On the other hand, let $\omega\subset\subset\Omega$, for $\gamma>1$, from \eqref{Et2} and  \eqref{Et3}, both with $\lambda$ instead of $\beta$, the following estimate is also established
\begin{align}
\label{Cal4}
\nonumber&\|u_n\|^p_{W^{1,p}(\omega)}\leq \left\{\|f\|_{L^1(\Omega)}\left( \frac{\frac{(2p-1)|\lambda|}{p+\gamma-1}\left(\frac{p}{N-p}\right)^p}{\gamma\left(\frac{p}{p+\gamma-1}\right)^p-\lambda^+\left(\frac{p}{N-p}\right)^p}+1\right)+\frac{(2\gamma-1)|\lambda|}{p+\gamma-1}\int_{\Omega}\frac{1}{|x|^p}\,dx+pM|\Omega|\left(\frac{2}{d}\right)^p\right.\\
\nonumber&\left.+\frac{2pM}{d}\left(\frac{S_{N,p}^p\|f\|_{L^1(\Omega)}}{\gamma\left(\frac{p}{\gamma+p-1}\right)^p-\lambda^+\left(\frac{p}{N-p}\right)^p}\right)^{p/(\gamma+p-1)}|\Omega|^{(N(\gamma-1)+p^2)/(N(p+\gamma-1))}\right\}\times\left(\frac{dM^{1/(p-1)}}{dM^{1/(p-1)}-p(d+2)}\right)\\
&+ \left(\frac{S_{N,p}^p\|f\|_{L^1(\Omega)}}{\gamma\left(\frac{p}{\gamma+p-1}\right)^p-\lambda^+\left(\frac{p}{N-p}\right)^p}\right)^{p/(\gamma+p-1)}|\Omega|^{(N(\gamma-1)+p^2)/(N(p+\gamma-1))}=:C(p, \lambda, \gamma, d, M),
\end{align}
where 
$$
\|v\|_{W^{1,p}(\omega)}=\left(\int_{\omega}|\nabla v|^p\,dx+\int_{\omega}|v|^p\,dx\right)^{1/p}.
$$

But this, in turn, implies that, 
\begin{align*}
\int_{\omega}|\nabla u_p|\,dx+\int_{\omega}u_p\,dx&\leq \frac{1}{p}\|u_p\|_{W^{1,p}(\omega)}^p+\frac{2(p-1)}{p}|\omega|\\
&\leq \frac{1}{p}\liminf_{n\to\infty}\|u_n\|^p_{W^{1,p}(\omega)}+\frac{2(p-1)}{p}|\Omega|\\
&\leq \frac{1}{p}C(p,\lambda,\gamma,d,M)+\frac{2(p-1)}{p}|\Omega|,
\end{align*}
where was used the fact that $u_n\rightharpoonup u_p$ in $W^{1,p}_{loc}(\Omega)$. Moreover, since $C(p,\lambda,\gamma,d,M)\to C(1,\lambda,\gamma,d,M)$ as $p\to1^+$,  $(u_p)_{p>1}$ is bounded in $BV_{loc}(\Omega)$ and so, by \cite[Theorem 3.23]{AmbrosioFuscoPallara}, there exists $u\in BV_{loc}(\Omega)$ such that, up to subsequences,
\begin{equation}
\label{u3}
 u_p\rightarrow u\quad\hbox{ in }\;L^1_{loc}(\Omega),
\end{equation}
as $p\to 1^+$.

On the other hand, since $u_p^{(\gamma+p-1)/p}\in {W^{1,p}_0(\Omega)}$, by Young inequality and \eqref{Estp}, one has
\begin{align}
\int_{\Omega}|\nabla u_p^{(\gamma+p-1)/p}|\,dx+\int_{\partial\Omega}|u^{(\gamma+p-1)/p}|\,d\mathcal{H}^{N-1}&\leq \frac{1}{p}\int_{\Omega}|\nabla u_p^{(\gamma+p-1)/p}|^p\,dx+\frac{p-1}{p}|\Omega|\\
&\leq \frac{1}{p}\frac{\|f\|_{L^1(\Omega)}}{\gamma\left(\frac{p}{\gamma+p-1}\right)^p-\lambda^+\left(\frac{p}{N-p}\right)^p}+\frac{p-1}{p}|\Omega|.
\end{align}
Hence $(u_p^{(\gamma+p-1)/p})_{p>1}$ is a bounded sequence in $BV(\Omega)$ and so, up to subsequences,
\begin{eqnarray}
\label{u4}
\nonumber \nabla u_p^{(\gamma+p-1)/p}\rightharpoonup Du^\gamma&*-\hbox{ weak in }\;BV(\Omega),\\
u_p^{(\gamma+p-1)/p}\to u^\gamma&\hbox{ in }\; L^q(\Omega)\; \hbox{ for all }\; q\in [1, 1^*),\quad\hbox{ a.e. in }\;\Omega,\\
\nonumber u_p^{(\gamma+p-1)/p}\rightharpoonup u^\gamma&\hbox{ in}\; L^{1^*}(\Omega),
\end{eqnarray}
as $p\to 1^+$.

\begin{lemma} 
\label{ConvZ}
Let $q>1$ and $\gamma>0$. Suppose that $u_p$ is a solution of \eqref{Pp}. Then, there exists  ${\bf z}\in L^\infty(\Omega, \mathbb{R}^N)$ with $\|{\bf z}\|_{L^\infty(\Omega)}\leq 1$ such that, for $0<\gamma\leq 1$, 

\begin{align}
\label{Cz}
|\nabla u_p|^{p-2}\nabla u_p\rightharpoonup{\bf z}\quad \mbox{ in }\;L^q(\Omega,\mathbb{R}^N)\quad\mbox{ as }\;p\to 1^+
\end{align}
and, for $\gamma>1$, 
\begin{align}
\label{Cz1}
|\nabla u_p|^{p-2}\nabla u_p\rightharpoonup{\bf z}_\epsilon\quad \mbox{ in }\;L^q(\Omega_\epsilon,\mathbb{R}^N)\quad \mbox{ as }\;p\to 1^+\quad\hbox{ and }\quad \chi_{\Omega_\epsilon}{\bf z}_\epsilon\rightharpoonup {\bf z}\quad\hbox{ in }\; L^q(\Omega, \mathbb{R}^N)\; \hbox{ as }\;\epsilon\to 0.
\end{align}
 Furthermore, ${\bf z}\in X_{\mathcal{M}_{loc}}(\Omega)$.
\end{lemma}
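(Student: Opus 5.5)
The plan follows the standard argument of \cite{DeCiccoGiachettiSegura2019, OrtizPetitta2024}: first a uniform $L^q$ bound on the fields $|\nabla u_p|^{p-2}\nabla u_p$, then weak compactness, then an $L^\infty$ bound on the limit obtained by lower semicontinuity, and finally a local bound on the divergence.

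\textbf{Case $0<\gamma\le1$.} Fix $q>1$ and take $p$ so close to $1$ that $q(p-1)<p$. By H\"older's inequality and \eqref{Ep},
\[
\int_\Omega \bigl||\nabla u_p|^{p-2}\nabla u_p\bigr|^q\,dx \le \Bigl(\int_\Omega|\nabla u_p|^p\,dx\Bigr)^{q(p-1)/p}|\Omega|^{1-q(p-1)/p}.
\]
The right-hand side stays bounded as $p\to1^+$, because the bracket in \eqref{Ep} converges (note that $S_{N,p}$ is continuous in $p$). Hence there is a weak limit ${\bf z}_q$ in $L^q(\Omega,\mathbb{R}^N)$.

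A diagonal argument over $q\in\mathbb N$ gives a single ${\bf z}$, independent of $q$, since weak limits in $L^q$ and $L^{q'}$ coincide on $L^{\max}$. Lower semicontinuity of the norm gives
\[
\|{\bf z}\|_{L^q}\le \liminf_{p\to1^+}\bigl\||\nabla u_p|^{p-1}\bigr\|_{L^q}\le |\Omega|^{1/q}.
\]
Letting $q\to\infty$ yields $\|{\bf z}\|_\infty\le1$.

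\textbf{Case $\gamma>1$.} For fixed $\epsilon$, the bound \eqref{Cal4} with $\omega=\Omega_\epsilon$ gives $\int_{\Omega_\epsilon}|\nabla u_p|^p\le C(p,\lambda,\gamma,d,M)$. This constant is bounded as $p\to1^+$ once $M$ is chosen large and fixed; we should check that the factor $dM^{1/(p-1)}/(dM^{1/(p-1)}-p(d+2))$ tends to $1$, which holds for $M>1$. The same H\"older argument on $\Omega_\epsilon$ then produces ${\bf z}_\epsilon$ with $\|{\bf z}_\epsilon\|_{L^\infty(\Omega_\epsilon)}\le1$. Using a diagonal sequence $\epsilon_j\to0$ and subsequences in $p$, the limits are compatible: ${\bf z}_{\epsilon'}={\bf z}_\epsilon$ on $\Omega_\epsilon$ for $\epsilon'<\epsilon$. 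Hence $\chi_{\Omega_\epsilon}{\bf z}_\epsilon$ is the restriction of a single field ${\bf z}$ with $|{\bf z}|\le1$, and it converges to ${\bf z}$ in $L^q$ (even strongly, by dominated convergence).

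\textbf{Divergence is a locally finite measure.} This is the main obstacle. Take $0\le\varphi\in C^1_c(\Omega)$ and test \eqref{WSp} with $\varphi$. Both terms on the right have a sign when $\lambda\ge0$; for $\lambda<0$, the Hardy term is handled separately as below. The goal is to bound, uniformly in $p$,
\[
\int_\Omega \frac{u_p^{p-1}\varphi}{|x|^p}\,dx \quad\text{and}\quad \int_\Omega \frac{f\varphi}{u_p^\gamma}\,dx
\]
on a compact $K\supset\operatorname{supp}\varphi$.

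For the Hardy term, Young's inequality gives $u_p^{p-1}\le 1+u_p$. Then $\int_K u_p/|x|^p$ is controlled, for $p$ near $1$, via H\"older with $u_p$ bounded in $L^{1^*}$ (or in $L^s$ when $\gamma>1$) and $|x|^{-p}\in L^{N/p-\delta}$. The borderline is delicate; I would rather use Hardy's inequality in $W^{1,p}$ together with the energy bounds \eqref{Ep} and \eqref{Cal}.

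For the singular term, one sees that $\int f\varphi u_p^{-\gamma}$ is bounded by the left side, namely
\[
\int_\Omega |\nabla u_p|^{p-1}|\nabla\varphi|\,dx \le \|\nabla\varphi\|_\infty\Bigl(\int_K|\nabla u_p|^p\,dx\Bigr)^{(p-1)/p}|K|^{1/p},
\]
plus the Hardy term. Both are bounded uniformly in $p$.

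Hence $-\operatorname{div}(|\nabla u_p|^{p-2}\nabla u_p)=\mu_p$ is a sequence of measures whose total variation on each compact set is bounded. Passing to the limit in the distributional identity, $\operatorname{div}{\bf z}$ is the weak-$*$ limit of $-\mu_p$ on each $\omega\subset\subset\Omega$, hence a locally finite Radon measure. This gives ${\bf z}\in X_{\mathcal M_{loc}}(\Omega)$.
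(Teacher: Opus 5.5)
Your proof is correct. The compactness step and the bound $\|{\bf z}\|_{L^\infty(\Omega)}\le1$ are as in the paper. Your remark that nested subsequences give $\chi_{\Omega_\epsilon}{\bf z}_\epsilon=\chi_{\Omega_\epsilon}{\bf z}$, so that the convergence as $\epsilon\to0$ is even strong, is a harmless refinement of the paper's extraction of a weak limit.

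The genuine difference is in proving ${\bf z}\in X_{\mathcal{M}_{loc}}(\Omega)$.

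\textbf{The paper's route.} It tests \eqref{WSp} with $0\le\varphi\in C^1_c(\Omega)$ and lets $p\to1^+$, using Fatou's lemma on the singular term and the limit of the Hardy term from Lemma \ref{Convs}. This gives the one-sided inequality \eqref{SupS} (resp.\ \eqref{SupS1} after $\epsilon\to0$). It then concludes by positivity: $-{\rm div}\,{\bf z}-g$ is a nonnegative distribution, hence a Radon measure.

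\textbf{Your route.} You bound the full right-hand side $\lambda u_p^{p-1}|x|^{-p}+fu_p^{-\gamma}$ in $L^1(K)$ uniformly in $p$. Hence the total variations of ${\rm div}(|\nabla u_p|^{p-2}\nabla u_p)$ are locally bounded, and this bound survives the distributional limit.

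\textbf{What each buys.} Your argument uses only uniform estimates: no limit $u$, no selection $s$, and no forward reference to Lemma \ref{Convs}. It also yields a quantitative local bound on $|{\rm div}\,{\bf z}|$. The paper's argument instead produces \eqref{SupS}, which is not part of the statement but is exactly what is used next (for instance to obtain \eqref{1Est}). With your approach, that inequality would still have to be derived separately by the same Fatou argument.

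\textbf{The Hardy term.} Commit to the Hardy-inequality route you prefer. The H\"older route with $u_p$ bounded in $L^{1^*}$ does fail, since it would require $|x|^{-p}\in L^N(\Omega)$. Instead, combine $u_p^{p-1}\le 1+u_p^p$ with Hardy's inequality $\int_\Omega u_p^p|x|^{-p}\,dx\le (p/(N-p))^p\int_\Omega|\nabla u_p|^p\,dx$ and \eqref{Ep}. When $\gamma>1$, use \eqref{Cal} instead, passed to the limit in $n$ by Fatou's lemma. Either way this gives the uniform bound immediately.
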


\dem   Suppose that $0<\gamma\leq 1$. Use H\"older's inequality and \eqref{Est6} to obtain
\begin{align}
\int_{\Omega}||\nabla u_p|^{p-2}\nabla u_p|^q\,dx&\leq \left(\int_{\Omega}|\nabla u_p|^p\,dx\right)^{q(p-1)/p}|\Omega|^{1-q(p-1)/p}\\
&\leq  \left[\frac{\|f\|_m S_{N,p}^{1-\gamma}}{1-\lambda^+(p/(N-p))^p}\right]^{(p-1)q/(p-1+\gamma)} |\Omega|^{1-q(p-1)/p},
\end{align}
for all $p\in (1, \bar{p}]$.  Hence
\begin{equation}
\label{NUp}
\left(\int_{\Omega}||\nabla u_p|^{p-2}\nabla u_p|^q\,dx\right)^{1/q}\leq\left[\frac{\|f\|_m S_{N,p}^{1-\gamma}}{1-\lambda^+(p/(N-p))^p}\right]^{(p-1)/(p-1+\gamma)}|\Omega|^{1/q-(p-1)/p},
\end{equation}
for all $p\in (1,\bar p]$. Thus, there exists ${\bf z}_q\in L^q(\Omega,\mathbb{R}^N)$ such that , up to subsequences,
\begin{equation}
\label{FCUp}
|\nabla u_p|^{p-2}\nabla u_p\rightharpoonup {\bf z}_q\quad\hbox{ in }\; L^q(\Omega, \mathbb{R}^N)\quad\hbox{ as }\; p\to 1^+
\end{equation}
and, by a diagonal argument, there exists ${\bf z}$ independent of $q$ such that \eqref{FCUp} yields.

Letting $p\to 1^+$ in \eqref{NUp}, we obtain that
$$
\|{\bf z}\|_{L^q(\Omega, \mathbb{R}^N)}\leq |\Omega|^{1/q}
$$
and so, letting $q\to \infty$, this implies 
$$
\|{\bf z}\|_{L^\infty(\Omega, \mathbb{R}^N)}\leq 1.
$$
Let $\varphi\in C^1_c(\Omega)$ with $\varphi\geq 0$. Use $\varphi$ as test function in \eqref{WSp}. After, letting $p\to 1^+$, using Fatou's lemma, we obtain that
\begin{equation}
\label{SupS}
\int_{\Omega}{\bf z}\cdot\nabla \varphi\,dx\geq \lambda\int_{\Omega}\frac{1}{|x|}s(x)\varphi\,dx+\int_{\Omega}\frac{f}{u^\gamma}\varphi\,dx,
\end{equation} 
where $s(x)\in \hbox{Sgn}\,(u(x))$ a.e. $x\in \Omega$. If we set $g(x) := \lambda \frac{s(x)}{\vert{}x\vert{}} + \frac{f}{u^\gamma} \in L^1_{\text{loc}}(\Omega)$, 
since $\langle -\operatorname{div} z - g, \phi \rangle \ge 0$ for all non-negative $\phi \in C_c^1(\Omega)$, $T = -\operatorname{div} z - g$ defines a positive linear functional, and thus $-\operatorname{div} z$ is a Radon measure on $\Omega$ by the Riesz Representation Theorem  $-\hbox{div}\,{\bf z}$ is a Radon measure (see for instance \cite[Theorem 7.2]{Folland1999}) and so, ${\bf z}\in X_{\mathcal{M}_{loc}}(\Omega)$.

Assume that $\gamma>1$. For each $\epsilon>0$, we define
$$
\Omega_\epsilon=\{x\in \Omega\colon \hbox{dist}\,(x,\partial\Omega)>\epsilon\}.
$$

Let $q>1$. By \eqref{Cal4} and H\"older inequality, one has
\begin{align}
\label{Bd}
\int_{{\Omega_\epsilon}}|\nabla u_p|^{(p-1)q}\,dx&\leq \left(\int_{{\Omega_\epsilon}}|\nabla u_p|^p\,dx\right)^{(p-1)q/p}|\Omega|^{1-(p-1)q/p}\\
\nonumber&\leq C(p, \lambda, \gamma, d, M)^{(p-1)q/p}|\Omega|^{1-(p-1)q/p},
\end{align}
where $d=\hbox{dist}\,(\Omega_\epsilon,\partial\Omega)=\epsilon$. Since $C(p,\lambda,\gamma,\epsilon, M)\to C(1,\lambda, \gamma, \epsilon, M)$ as $p\to 1^+$, there exists ${\bf z}_\epsilon^q$ such that, up to subsequences,
$$
|\nabla u_p|^{p-2}\nabla u_p\rightharpoonup {\bf z}_\epsilon^q\quad\hbox{ in }\;L^q(\Omega_\epsilon, \mathbb{R}^N),
$$
as $p\to 1^+$. By a diagonal argument there exists ${\bf z}_\epsilon$ independent of $q$ such that, up to subsequences,
$$
|\nabla u_p|^{p-2}\nabla u_p\rightharpoonup {\bf z}_\epsilon\quad\hbox{ in }\;L^q(\Omega_\epsilon, \mathbb{R}^N),
$$
as $p\to 1^+$. But this, in turn, in \eqref{Bd} implies that, by lower semicontinuity,
\begin{equation}
\label{Ee}
\left(\int_{\Omega_\epsilon}|{\bf z}_\epsilon|^q\right)^{1/q}\leq |\Omega|^{1/q}
\end{equation}
Note that $(\chi_{\Omega_\epsilon}{\bf z}_\epsilon)_{\epsilon>0}$ is bounded in $L^q(\Omega, \mathbb{R}^N)$. Hence there exists ${\bf z}\in L^q(\Omega, \mathbb{R}^N)$ such that, up to subsequences, 
$$
\chi_{\Omega_\epsilon}{\bf z}_\epsilon\rightharpoonup {\bf z}\quad\hbox{ in }\; L^q(\Omega, \mathbb{R}^N),
$$
as $\epsilon\to 0$. Hence, in \eqref{Ee}, by lower semicontinuity, we get
$$
\left(\int_{\Omega}|{\bf z}|^q\,dx\right)^{1/q}\leq |\Omega|^{1/q}
$$
and, letting $q\to\infty$,
$$
\|{\bf z}\|_{L^\infty(\Omega, \mathbb{R}^N)}\leq 1.
$$
 
On the other hand, using $0\leq \varphi\in C^1_c(\Omega)$ as test function in \eqref{WSp} and letting $p\to1^+$, we obtain
$$
\int_{\Omega_\epsilon}{\bf z}_\epsilon\cdot\nabla\varphi\,dx\geq \lambda\int_{\Omega}\frac{s(x)\varphi}{|x|}\,dx+\int_{\Omega}\frac{f\varphi}{u^\gamma}\,dx
$$
and, since $\chi_{\Omega_\epsilon}{\bf z}_\epsilon\rightharpoonup {\bf z}$ in $L^q(\Omega, \mathbb{R}^N)$, one has
\begin{equation}
\label{SupS1}
\int_{\Omega}{\bf z}\cdot\nabla\varphi\,dx\geq \lambda\int_{\Omega}\frac{s(x)\varphi}{|x|}\,dx+\int_{\Omega}\frac{f\varphi}{u^\gamma}\,dx,
\end{equation}
where $s(x)\in \hbox{Sgn}\,(u(x))$ a.e. $x\in \Omega$ and, by Riesz Theorem, $-\hbox{div}\,{\bf z}$ is a Radon measure and so, ${\bf z}\in X_{\mathcal{M}_{loc}}(\Omega)$.

\fim

\begin{lemma}
\label{Convs}
Let $u_p$ be a sequence of solutions to the $p$-Laplace problem \eqref{Pp}. Then
$$
\lim_{p\to 1^+}\int_{\Omega}\frac{1}{|x|^p}|u_p|^{p-2}u_p\varphi\,dx=\int_{\Omega}\frac{1}{|x|}s(x)\varphi\,dx,
$$
where $s(x)\in \hbox{Sgn}\,(u(x))$ a.e. $x\in \Omega$ where  $u$  is given in  \eqref{u2}.
\end{lemma}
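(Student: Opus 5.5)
The plan is to find a weak limit of $\sigma_p:=|u_p|^{p-2}u_p$ in a weighted space, identify it as a selection of $\mathrm{Sgn}(u)$, and then control the difference between the weights $|x|^{-p}$ and $|x|^{-1}$ separately. Fix $\varphi\in C^1_c(\Omega)$ and split
\begin{equation*}
\int_{\Omega}\frac{\sigma_p\varphi}{|x|^p}\,dx=\int_{\Omega}\frac{\sigma_p\varphi}{|x|}\,dx+\int_{\Omega}\Big(\frac{1}{|x|^p}-\frac{1}{|x|}\Big)\sigma_p\varphi\,dx .
\end{equation*}

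\textbf{Step 1: uniform bound on $\sigma_p$.} Since $u_p\ge0$, we have $0\le\sigma_p=u_p^{p-1}$. For any $r<\infty$ and $p$ close to $1$, $\|u_p^{p-1}\|_{L^r(\omega)}\le\|u_p\|_{L^1(\omega)}^{p-1}|\omega|^{1/r-(p-1)}$ by H\"older, with $\omega\supset\mathrm{supp}\,\varphi$. The $L^1$ (or $L^1_{loc}$) bounds on $u_p$ from \eqref{u2} and \eqref{u3} give $\limsup_{p\to1^+}\|\sigma_p\|_{L^r(\omega)}\le|\omega|^{1/r}$. By a diagonal argument there is $s$ with $\sigma_p\rightharpoonup s$ weakly in every $L^r(\omega)$, $r<\infty$, and letting $r\to\infty$ gives $0\le s\le1$.

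\textbf{Step 2: identifying $s$.} On $\{u>0\}$ we have $u_p\to u>0$ a.e., so $u_p^{p-1}\to1$ a.e. there. Boundedness in $L^r$ and Vitali's theorem give strong convergence on $\{u>0\}\cap\omega$, so $s=1$ there. On $\{u=0\}$ only $0\le s\le1$ is needed. Hence $s\in\mathrm{Sgn}(u)$ a.e. Since $|x|^{-1}\varphi\in L^{r'}(\omega)$ for $r'<N$, i.e. for $r>N/(N-1)$, the first integral converges to $\int_\Omega s\varphi/|x|\,dx$. The selection $s$ must also be the one used in \eqref{SupS}, so the subsequence should be chosen consistently with Lemma \ref{ConvZ}.

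\textbf{Step 3: the weight difference.} This is the main obstacle, because $|x|^{-p}$ is not uniformly in $L^N$. Apply H\"older with exponents $r$ and $r'$ to
\begin{equation*}
\Big|\int_\omega\Big(|x|^{-p}-|x|^{-1}\Big)\sigma_p\varphi\,dx\Big|\le\|\varphi\|_\infty\,\|\sigma_p\|_{L^r(\omega)}\,\big\||x|^{-p}-|x|^{-1}\big\|_{L^{r'}(\omega)} .
\end{equation*}
Take $r'\in(1,N)$ fixed and $p<N/r'$. Then $|x|^{-pr'}\le |x|^{-\bar q}+C$ with $\bar q<N$ uniformly for $p\in(1,p_0]$, and $|x|^{-p}\to|x|^{-1}$ pointwise. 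Dominated convergence gives $\||x|^{-p}-|x|^{-1}\|_{L^{r'}(\omega)}\to0$, while $\|\sigma_p\|_{L^r(\omega)}$ stays bounded by Step 1. So this term vanishes.

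Combining Steps 2 and 3 yields the claim. In the case $\gamma>1$ the same argument works, since all bounds are local on $\mathrm{supp}\,\varphi$ via \eqref{u3}.
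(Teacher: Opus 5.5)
Your proof is correct. Its core mechanism is the same as the paper's: weak compactness of $u_p^{p-1}$ in every $L^q$ with $\limsup_{p\to1^+}\|u_p^{p-1}\|_{L^q}\le|\Omega|^{1/q}$, hence a limit with values in $[0,1]$. The route differs in a few respects.

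The paper treats $0<\gamma\le1$ only by reference to \cite[Theorem 3.3]{OrtizPetitta2024}. For $\gamma>1$ it splits $\Omega$ into $\{u>0\}$ and $\{u=0\}$, and derives the $L^q$ bound, on $\{u=0\}$ only, from the Sobolev inequality applied to $u_p^{(\gamma+p-1)/p}$ together with estimate \eqref{Estp}.

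You instead get the bound from nothing more than the $L^1$ (or $L^1_{\rm loc}$) bounds on $u_p$, via H\"older with $(p-1)r<1$, so one argument covers every $\gamma>0$. You then take the weak limit on all of $\mathrm{supp}\,\varphi$ and identify it as $1$ on $\{u>0\}$ through a.e.\ convergence and Vitali. In doing so you make explicit two steps the paper's proof passes over:
\begin{itemize}
\item the convergence on $\{u>0\}$, which the paper simply asserts;
\item the passage from the weight $|x|^{-p}$ to $|x|^{-1}$, which you settle by the strong convergence $|x|^{-p}\to|x|^{-1}$ in $L^{r'}$ for $r'<N$. The paper's concluding display in fact writes $|x|$ and $|x|^p$ interchangeably.
\end{itemize}

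One thing to tidy up: your local bound makes $s$ a priori depend on $\omega\supset\mathrm{supp}\,\varphi$. To obtain a single selection valid for all test functions, you can either run a diagonal argument over an exhaustion of $\Omega$, or use a global bound instead. For $\gamma>1$, estimate \eqref{Et2} (passed to the limit $n\to\infty$ by Fatou's lemma) already bounds $u_p$ uniformly in $L^p(\Omega)$, hence in $L^1(\Omega)$. So you can take $\omega=\Omega$ exactly as in the case $\gamma\le1$.
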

 
\dem
 For $0<\gamma\leq1$, one can argue exactly as in \cite[Theorem 3.3]{OrtizPetitta2024} to obtain the result.

 In order to prove it for $\gamma>1$, we write
\begin{equation}
\label{D1}
\int_{\Omega}\frac{u_p^{p-1}\varphi}{|x|^p}\,dx=\int_{\{u>0\}}\frac{u_p^{p-1}\varphi}{|x|^p}\,dx+\int_{\{u=0\}}\frac{u_p^{p-1}\varphi}{|x|^p}\,dx.
\end{equation}
Note that, by H\"older's and Sobolev's  inequalities and \eqref{Estp}, we have
\begin{align}
\label{Et0}
\nonumber\left(\int_{\{u=0\}}u_p^{(p-1)q}\,dx\right)^{1/q}&\leq \left(\int_{\Omega}u_p^{(p+\gamma-1)N/(N-p)}\,dx\right)^{(N-p)(p-1)/(N(p+\gamma-1))}|\Omega|^{1/q-(p-1)(N-p)/(N(p+\gamma-1))}\\
&\leq \left(S_{N,p}^p\int_{\Omega}|\nabla u_p^{(p+\gamma-1)/p}|^p\,dx\right)^{(p-1)/(N(p+\gamma-1))}|\Omega|^{1/q-(p-1)(N-p)/(N(p+\gamma-1))}\\
\nonumber&\leq \left(\frac{S_{N,p}^p\|f\|_{L^1(\Omega)}}{\gamma\left(\frac{p}{p+\gamma-1}\right)^p-\lambda^+\left(\frac{p}{N-p}\right)^p}\right)^{(p-1)/(N(p+\gamma-1))}|\Omega|^{1/q-(p-1)(N-p)/(N(p+\gamma-1))}
\end{align}
Hence $(u_p^{p-1})_{p>1}$ is bounded in $L^q(\{u=0\})$ and so, there exists $v_q\in L^q(\{u=0\})$ such that, up to subsequences,
$$
u_p^{p-1}\rightharpoonup v_q\quad\hbox{ in }\; L^q(\{u=0\})\quad\hbox{ as }\; p\to1^+.
$$
By a diagonal argument, there exists $v$ independent of $q$ in $L^q(\{u=0\})$, such that, up to subsequences,
\begin{equation}
\label{Cv}
u_p^{p-1}\rightharpoonup v\quad\hbox{ in }\; L^q(\{u=0\})\quad\hbox{ as }\; p\to1^+.
\end{equation}
But, in \eqref{Et0}, this implies that, 
$$
\left(\int_{\{u=0\}}|v|^q\,dx\right)^{1/q}\leq |\Omega|^{1/q}
$$
and so, letting $q\to\infty$, 
$$
\|v\|_{ L^\infty(\{u=0\})}\leq 1.
$$
Consequently, in \eqref{D1}, we obtain  
$$
\lim_{p\to 1^+}\int_{\Omega}\frac{u_p^{p-1}\varphi}{|x|}\,dx=\int_{\{u>0\}}\frac{\varphi}{|x|^p}\,dx+\int_{\{u=0\}}\frac{v\varphi}{|x|}\,dx=\int_{\Omega}\frac{s(x)\varphi}{|x|}\,dx,
$$
where $s(x)\in \hbox{Sgn}\,(u(x))$ a.e. $\Omega$.

\fim

\begin{lemma}
\label{IdUZ}
 For $0<\gamma\leq 1$. Let ${\bf z}\in X_{\mathcal{M}_{loc}}(\Omega)$ and $u\in BV(\Omega)$ as   in \eqref{Cz} and \eqref{u2}, respectively. Then
\begin{equation}
\label{G}
-\int_{\Omega}u^*\hbox{div}\,{\bf z}= \lambda\int_{\Omega}\frac{|u|}{|x|}\,dx+\int_{\Omega}fu^{1-\gamma}\,dx.
\end{equation}
In particular $u^*\in L^1(\Omega, \hbox{div}\,{\bf z})$. For $\gamma>1$, let ${\bf z}\in X_{\mathcal{M}_{loc}}(\Omega)$ and $u^\gamma\in BV(\Omega)$ as  in \eqref{Cz1} and \eqref{u4}, respectively, \eqref{G} holds with $u^\gamma$ instead of $u$.

\end{lemma}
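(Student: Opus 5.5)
We plan to prove \eqref{G} through the approximating solutions $u_p$: first obtain the energy identity at level $p$, then pass to the limit $p\to1^+$. We only need to show that the left-hand side is at least the right-hand side; the reverse inequality will come from a cutoff argument combined with the pairing theory of Section \ref{Prel}.

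\textbf{Case $0<\gamma\leq1$.} We test \eqref{WSp} with $u_p\varphi$, with $0\le\varphi\in C^1_c(\Omega)$. This is legitimate after approximation, since $u_p\in W^{1,p}_0(\Omega)$ and $u_p\ge c_\omega$ on compact sets. We obtain
\begin{equation*}
\int_\Omega \varphi|\nabla u_p|^p\,dx+\int_\Omega u_p|\nabla u_p|^{p-2}\nabla u_p\cdot\nabla\varphi\,dx=\lambda\int_\Omega\frac{u_p^p\varphi}{|x|^p}\,dx+\int_\Omega f u_p^{1-\gamma}\varphi\,dx .
\end{equation*}
We then let $p\to1^+$ term by term.
\begin{itemize}
\item By Young's inequality, $\int\varphi|\nabla u_p|^p\ge\int\varphi|\nabla u_p|-\frac{p-1}{p}\int\varphi$. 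Lower semicontinuity then gives $\liminf\ge\int\varphi|Du|$, which in turn dominates $\int\varphi\,({\bf z},Du)$ because $\|{\bf z}\|_\infty\le1$ (Proposition \ref{Dist}).
\item The second term converges to $\int u\,{\bf z}\cdot\nabla\varphi$. This uses the weak $L^q$ convergence \eqref{Cz} together with the strong $L^{q'}$ convergence of $u_p$ for $q'<1^*$, from \eqref{u2}.
\item On the right, the Hardy term converges to $\lambda\int u\varphi/|x|$, arguing as in Lemma \ref{Convs} and \cite{OrtizPetitta2024}; here $u_p^p\to u$ strongly in $L^{r}$ with $r$ slightly above $N/(N-1)$ conjugate to $1/|x|\in L^{N-\delta}$.
\item The last term converges by Vitali, since $u_p^{1-\gamma}$ is bounded in $L^{p^*/(1-\gamma)}$ and $f\in L^m$.
\end{itemize}
This yields, for every $0\le\varphi\in C^1_c$,
\begin{equation*}
\int_\Omega\varphi\,|Du|+\int_\Omega u\,{\bf z}\cdot\nabla\varphi\,dx\le\lambda\int_\Omega\frac{u\varphi}{|x|}\,dx+\int_\Omega fu^{1-\gamma}\varphi\,dx .
\end{equation*}
In the limit we also need an identification of ${\rm div}\,{\bf z}$ against $u$. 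We obtain it by combining \eqref{SupS} with the definition of the pairing (Definition \ref{zDu}): $-\int u\,{\bf z}\cdot\nabla\varphi=\int\varphi u^*{\rm div}\,{\bf z}+\int\varphi({\bf z},Du)$. Before using this we must show $u^*\in L^1_{loc}(\Omega,{\rm div}\,{\bf z})$. We do this by testing \eqref{SupS} with $T_k(u)\ast\rho_\epsilon\,\varphi$ and using the Chen--Frid convergence exactly as in the proof of Proposition \ref{uz}; the resulting identity then holds locally.

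\textbf{Removing the cutoff (main obstacle).} We take $\varphi=\varphi_j\uparrow\chi_\Omega$ with $|\nabla\varphi_j|\lesssim j$ supported in $\Omega\setminus\Omega_{1/j}$. The boundary term $\int u\,{\bf z}\cdot\nabla\varphi_j$ must be controlled. At level $p$ this is easy: there one can test directly with $u_p$ ($\varphi\equiv1$) because $u_p\in W^{1,p}_0$. This gives the global equality
\begin{equation*}
\int|\nabla u_p|^p=\lambda\int u_p^p|x|^{-p}+\int fu_p^{1-\gamma},
\end{equation*}
and its limit is $\int|Du|+\int_{\partial\Omega}u\ge$ RHS, which handles one direction. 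For the other direction, $-\int u^*{\rm div}\,{\bf z}\le$ RHS, we will use the local inequality together with monotone convergence. The measure $-{\rm div}\,{\bf z}$ is bounded below by $g=\lambda s/|x|+f/u^\gamma$, so $-u^*{\rm div}\,{\bf z}\ge \lambda u/|x|$, which is integrable. Fatou/monotone convergence in $j$ then works once we show $\int u\,{\bf z}\cdot\nabla\varphi_j$ has nonnegative $\limsup$, or tends to a boundary trace term of the right sign. We expect this to be the delicate point. We would try the outward choice $\varphi_j$ depending on distance, for which $-{\bf z}\cdot\nabla\varphi_j\, u\le |\nabla\varphi_j|u$; this tends to $\int_{\partial\Omega}u\,d\mathcal H^{N-1}$, which is then absorbed using $\int|Du|\ge\int({\bf z},Du)$ on the boundary layer.

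\textbf{Case $\gamma>1$.} We repeat the argument with test function $u_p^{\gamma}\varphi$, or $u_p^{(\gamma+p-1)/p}$-type powers. The roles of $u_p$ and the BV bounds are then played by $u_p^{(\gamma+p-1)/p}\to u^\gamma$ from \eqref{u4}, and the singular term becomes $f u_p^{\gamma}/u_p^{\gamma}=f$, which passes to the limit trivially. The Hardy term is handled through Lemma \ref{Convs}, with the $L^q$ bounds \eqref{Et0} replacing \eqref{u2}.
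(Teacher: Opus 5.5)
There is a genuine gap where you let $p\to1^+$ in the Hardy term, and it is exactly where the paper uses truncations.

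With the test function $u_p\varphi$ you need $\lambda\int_\Omega u_p^p\varphi|x|^{-p}\,dx\to\lambda\int_\Omega u\varphi|x|^{-1}\,dx$. For $\lambda>0$ you need the $\limsup$ inequality, and Fatou only gives the opposite one. Your justification does not hold. By \eqref{u2}, $u_p\to u$ strongly only in $L^q$ with $q<1^*$, never in some $L^r$ with $r>N/(N-1)$. Since $|x|^{-1}\in L^{N,\infty}(\Omega)\setminus L^N(\Omega)$, this pairing is exactly critical. $BV$-type bounds do not exclude concentration at the origin: $v_n=n^{N-1}\chi_{B_{1/n}(0)}$ is bounded in $BV$ and tends to $0$ in $L^1$, yet $\int v_n|x|^{-1}\,dx=\frac{N|B_1(0)|}{N-1}$ for every $n$.

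The paper instead tests \eqref{WSp} with $(\rho_\epsilon\ast T_k(u_p))\varphi$. The Hardy integrand is then $u_p^{p-1}T_k(u_p)\varphi|x|^{-p}\le k\|\varphi\|_\infty u_p^{p-1}|x|^{-p}$. Since $u_p^{p-1}$ is bounded in $L^{p^*/(p-1)}$ and $p^*/(p-1)\to\infty$, this family is equi-integrable and converges to $\lambda\int_\Omega s\,T_k(u)\varphi|x|^{-1}\,dx$. The limit $k\to\infty$ is taken only afterwards, on the limit function, where $u|x|^{-1}\in L^1(\Omega)$ by the Hardy inequality in $BV$ and monotone convergence applies.

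Truncation also removes a circularity in your identification step. The pairing $({\bf z},DT_k(u))$ is well defined because $T_k(u)^*$ is bounded, whereas $({\bf z},Du)$ presupposes $u^*\in L^1_{\rm loc}(\Omega,{\rm div}\,{\bf z})$. Testing \eqref{SupS} alone only bounds $-\int T_k(u)^*\varphi\,{\rm div}\,{\bf z}$ from below, which does not control $\int T_k(u)^*\varphi\,d|{\rm div}\,{\bf z}|$. Combining that lower bound with the upper bound from the truncated $p$-problem gives the local identity for $T_k(u)$, and then for $u$. The same applies for $\gamma>1$, where the paper tests with $T_k(u_p)^\gamma\varphi$; note that $u_p$ is unbounded near the origin by Proposition \ref{Unbdd}.

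Your ``main obstacle'' is not one, and your sketch of it is incorrect. Suppose $-\int_\Omega\varphi\,u^*{\rm div}\,{\bf z}=\int_\Omega\varphi h\,dx$ for all $\varphi\in C^1_c(\Omega)$, with $h=\lambda u|x|^{-1}+fu^{1-\gamma}\in L^1(\Omega)$. Then the Radon measure $-u^*{\rm div}\,{\bf z}$ on $\Omega$ equals $h\,\mathcal{L}^N$. It is therefore finite, $u^*\in L^1(\Omega,{\rm div}\,{\bf z})$, and its total mass is $\int_\Omega h\,dx$, which is \eqref{G}. This is the paper's argument; no boundary term appears because \eqref{G} only involves the open set $\Omega$.

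Even in your scheme, the term $\int_\Omega u\,{\bf z}\cdot\nabla\varphi_j\,dx$ cancels. Your local inequality, the pairing identity and $({\bf z},Du)\le|Du|$ together give $-\int\varphi_j u^*{\rm div}\,{\bf z}\le\int\varphi_j h$ for each $j$.

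The global identity at level $p$ does not help. By lower semicontinuity it yields $\int_\Omega|Du|+\int_{\partial\Omega}u\,d\mathcal H^{N-1}\le\lambda\int_\Omega u|x|^{-1}dx+\int_\Omega fu^{1-\gamma}dx$, not $\ge$, and only after the same critical Hardy term has been handled. Relating it to $-\int_\Omega u^*{\rm div}\,{\bf z}$ would also need the global Green formula, which presupposes the integrability you are trying to prove.
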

\dem
First of all, let $0\leq \varphi\in  C^\infty_c(\Omega)$ and $\rho_\epsilon$ be the mollifiers defined as in \eqref{mll}. Moreover, we know  that there exists $\epsilon_0>0$ such that  
$$
\hbox{supp}\,(\varphi)\subset \{x\in \Omega\>:\> \hbox{dist}\,(x, \partial\Omega)>\epsilon_0\}=:\Omega_{\epsilon_0}.
$$
Note that $\Omega_{\epsilon_0}\subset\subset\Omega$. 

Suppose that $0<\gamma\leq 1$. Use $(T_k(u)*\rho_\epsilon)\varphi$ as test function in \eqref{SupS}. Then, letting $\epsilon$ goes to $0$, one has that
\begin{equation}
\label{1Est}
-\int_{\Omega}T_k(u)^*\varphi\,\hbox{div}\,{\bf z}\geq \lambda\int_{\Omega}\frac{s(x)}{|x|}T_k(u)\varphi\,dx+\int_{\Omega}\frac{f}{u^\gamma}T_k(u)\varphi\,dx.
\end{equation} 
  In order to prove the opposite inequality, let $\epsilon\in (0, \frac{\epsilon_0}{2})$, the following convergence holds (see for instance \cite[Lemma 3.15]{Adams1975}) 
$$
\rho_\epsilon*T_k(u_p)\rightarrow T_k(u_p)\quad\hbox{ in }\; W^{1, p}(\Omega_{\epsilon_0}),
$$
as $\epsilon\to 0$. Thus
\begin{equation}
\label{C1}
\lim_{\epsilon\to 0}\int_{\Omega}\varphi|\nabla u_p|^{p-2}\nabla u_p\cdot \nabla(\rho_\epsilon*T_k(u_p))\,dx=\int_{\Omega}\varphi|\nabla u_p|^{p-2}\nabla u_p\cdot\nabla T_k(u_p)\,dx
\end{equation}
and 
\begin{equation}
\label{C2}
\lim_{\epsilon\to 0}\int_{\Omega}\rho_\epsilon*T_k(u_p)|\nabla u_p|^{p-2}\nabla u_p\cdot \nabla \varphi\,dx=\int_{\Omega}T_k(u_p)|\nabla u_p|^{p-2}\nabla u_p\cdot \nabla \varphi\,dx.
\end{equation}

We use $ (\rho_{\epsilon}* T_k(u_p))\varphi$ as test function in \eqref{WSp} and we pass to the limit as $\epsilon$ goes to 0. Then, by \eqref{C1} and \eqref{C2},
$$
\int_{\Omega}\varphi|\nabla T_k(u_p)|^p\,dx+\int_{\Omega}T_k(u_p)|\nabla u_p|^{p-2}\nabla u_p\cdot \nabla \varphi\,dx{=}\lambda\int_{\Omega}\frac{1}{|x|^p}u_p^{p-1}T_k(u_p)\varphi\,dx+\int_{\Omega}\frac{f}{u_p^\gamma}T_k(u_p)\varphi\,dx.
$$
But this, in turn, using Young's inequality, implies that
\begin{equation}
\label{E1}
\int_{\Omega}\varphi|\nabla T_k(u_p)|\,dx\leq \frac{\lambda}{p}\int_{\Omega}\frac{1}{|x|^p}u_p^{p-1}T_k(u_p)\varphi\,dx+\frac{1}{p}\int_{\Omega}\frac{f}{u_p^\gamma} T_k(u_p)\varphi\,dx+\frac{p-1}{p}\int_{\Omega}\varphi\,dx-\int_{\Omega}T_k(u_p)|\nabla u_p|^{p-2}\nabla u_p\cdot\nabla \varphi\,dx.
\end{equation}
Letting $p\to 1^+$, using the weak lower semicontinuity of $u\mapsto \int_{\Omega}\varphi|Du|$ with respect to the $L^1$ convergence, it becomes
\begin{align}
\label{IneqDu}
\int_{\Omega}\varphi|DT_k(u)|+\int_{\Omega}T_k(u){\bf z}\cdot \nabla \varphi\,dx&\leq \lambda\int_{\Omega}\frac{1}{|x|}s(x)T_k(u)\varphi\,dx+\int_{\Omega}\frac{f}{u^\gamma}T_k(u)\varphi\,dx
\end{align}

Now by Proposition \ref{Dist} (see also Remark \ref{R1}), one has  
\begin{align}
\left|\int_{\Omega}\varphi({\bf z}, T_k(u))\right|\leq  \int_{\Omega}\varphi|DT_k(u)|.\nonumber 
\end{align}
Then,  recalling  Definition \ref{zDu} and using \eqref{IneqDu} and \eqref{1Est}, one has  
\begin{equation}
\label{A1}
-\int_{\Omega}\varphi (T_k(u))^*\hbox{div}\,{\bf z}=\lambda\int_{\Omega}\frac{1}{|x|}s(x)T_k(u)\varphi\,dx+\int_{\Omega}\frac{f}{u^\gamma}T_k(u) \varphi\,dx.
\end{equation}
But this, in turn, implies that $(|- (T_k(u))^*\hbox{div}\,{\bf z}|)_{k\geq 1}$ is a bounded sequence of finite Radon measures on $\Omega$. Hence, by \cite[Theorem 1.54, Proposition 1.62 b)]{AmbrosioFuscoPallara}, up to subsequences, 
$$
- (T_k(u))^*\hbox{div}\,{\bf z}\rightharpoonup -u^*\hbox{div}\,{\bf z}\qquad*-\hbox{weakly in }\quad\Omega,
 $$
as $k\to\infty$. Consequently, letting $k\to \infty$  in \eqref{A1} we have that  
\begin{equation}
\label{B1}
-\int_{\Omega}\varphi u^*\hbox{div}\,{\bf z}  =  \lambda\int_{\Omega}\frac{1}{|x|}s(x)u\varphi\,dx+\int_{\Omega}fu^{1-\gamma}\varphi\,dx,\qquad\hbox{ for all }\;\varphi\in C^1_c(\Omega).
\end{equation}
 
Moreover, this identity holds for all $\varphi\in C_0(\Omega)$ since it is the completion of $C_c(\Omega)$ with respect to sup-norm, which, in turn, is the completion of $C^1_c(\Omega)$ with respect to the same norm. Hence, by \cite[Theorem 1.54]{AmbrosioFuscoPallara}, 
$$
-\int_{\Omega}u^*\hbox{div}\,{\bf z}=\lambda\int_{\Omega}\frac{1}{|x|}s(x)u\,dx+\int_{\Omega}fu^{1-\gamma}\,dx
$$
and
\begin{equation}
\label{UL1}
|-u^*\hbox{div}\,{\bf z}|(\Omega)=\int_{\Omega}u^*|-\hbox{div}\,{\bf z}|<\infty.
\end{equation}
 
Suppose that $\gamma>1$. Use $((T_k(u))^\gamma\ast \rho_\epsilon)\varphi$ as test function in \eqref{SupS} and after, let $\epsilon\to 0$, to obtain
\begin{equation}
\label{1D}
-\int_{\Omega}((T_k(u))^\gamma)^*\varphi\hbox{div}\,{\bf z}\geq\lambda\int_{\Omega}\frac{s(x)}{|x|} (T_k(u))^\gamma\varphi\,dx+\int_{\Omega}\frac{f}{u^\gamma} (T_k(u))^\gamma\varphi\,dx.
\end{equation}
In order to prove the contrary inequality, for $\epsilon\in (0,\frac{\epsilon_0}{2})$, note that 
\begin{equation}
\label{CT}
\rho_\epsilon*(T_k(u_p))^\gamma\to  (T_k(u_p))^\gamma\qquad\hbox{ in }\quad W^{1,p}(\Omega_{\epsilon_0}),
\end{equation}
as $\epsilon\to0$.

We use $ (\rho_{\epsilon}* T_k(u_p))\varphi$ as test function in \eqref{WSp} and we pass to the limit as $\epsilon$ goes to 0 by considering \eqref{CT},  to obtain
\begin{align}
&\gamma\left(\frac{p}{\gamma+p-1}\right)^p\int_{\Omega}\varphi|\nabla (T_k(u_p))^{(\gamma+p-1)/p}|^p\,dx+\int_{\Omega}(T_k(u_p))^\gamma|\nabla u_p|^{p-2}\nabla u_p\cdot \nabla \varphi\,dx\\
&\quad\leq \lambda\int_{\Omega}\frac{1}{|x|^p}u_p^{p-1}(T_k(u_p))^\gamma\varphi\,dx+\int_{\Omega}\frac{f}{u_p^\gamma}(T_k(u_p))^\gamma\varphi\,dx.
\end{align}
Arguing as the previous case,  using the Young inequality and then letting $p\to1^+$,  we get
 $$
\int_{\Omega}\varphi|D (T_k(u))^\gamma|\leq \lambda\int_{\Omega}\frac{s(x)(T_k(u))^\gamma}{|x|}\varphi\,dx+\int_{\Omega}\frac{f}{u^\gamma}(T_k(u))^\gamma \varphi\,dx-\int_{\Omega}(T_k(u))^\gamma{\bf z}_\epsilon\cdot\nabla \varphi\,dx,
$$
for each $\epsilon\in (0,\epsilon_0/2)$, where was used \eqref{Cz1}. Letting $\epsilon\to 0$, by \eqref{Cz1} again one has
\begin{equation}
\label{Ek}
\int_{\Omega}\varphi|D (T_k(u))^\gamma|\leq \lambda\int_{\Omega}\frac{s(x)}{|x|}(T_k(u))^\gamma\varphi\,dx+\int_{\Omega}\frac{f}{u^\gamma}(T_k(u))^\gamma\varphi\,dx-\int_{\Omega}(T_k(u))^\gamma{\bf z}\cdot\nabla \varphi\,dx
\end{equation}
and so, 
$$
-\int_{\Omega}(T_k(u)^\gamma)^*\varphi\hbox{div}\,{\bf z}\leq \lambda\int_{\Omega}\frac{s(x)}{|x|}(T_k(u))^\gamma\varphi\,dx+\int_{\Omega}\frac{f}{u^\gamma}(T_k(u))^\gamma\varphi\,dx,
$$
where was used the fact that $\left|\int_{\Omega}\varphi({\bf z}, (DT_k(u))^\gamma)\right|\leq \int_{\Omega}\varphi|D(T_k(u))^\gamma|$ and Definition \ref{zDu}. Moreover, by \eqref{1D} one has
$$
 -\int_{\Omega}(T_k(u)^\gamma)^*\varphi\hbox{div}\,{\bf z}= \lambda\int_{\Omega}\frac{s(x)}{|x|}(T_k(u))^\gamma\varphi\,dx+\int_{\Omega}\frac{f}{u^\gamma}(T_k(u))^\gamma\varphi\,dx.
$$
Hence, letting $k\to\infty$ one has
\begin{equation}
\label{2D}
-\int_{\Omega}(u^\gamma)^*\varphi\,\hbox{div}\,{\bf z}=\lambda\int_{\Omega}\frac{|u^\gamma|}{|x|}\varphi\,dx+\int_{\Omega}f\varphi\,dx.
\end{equation}
and, by \cite[Theorem 1.54]{AmbrosioFuscoPallara}, we can drop $\varphi$ above and holds
$$
|-(u^\gamma)^*\hbox{div}\,{\bf z}|(\Omega)=\int_{\Omega}(u^\gamma)^*|\hbox{div}\,{\bf z}|<\infty.
$$

\fim

\begin{lemma}
\label{ZDu}
For $0<\gamma\leq 1$, let $u\in BV(\Omega)$ and ${\bf z}\in X_{\mathcal{M}_{loc}}(\Omega)$ be as in \eqref{u2}  and \eqref{Cz}, respectively, we have that
\begin{equation}
\label{Du}
|Du|=({\bf z}, Du)\quad\hbox{ as  finite Radon measures on }\;\Omega.
\end{equation}
For $\gamma>1$, let $u^\gamma\in BV(\Omega)$ and ${\bf z}\in X_{\mathcal{M}_{loc}}(\Omega)$ be as in \eqref{u4} and \eqref{Cz1}, respectively, one has that \eqref{Du} holds too. 
 
\end{lemma}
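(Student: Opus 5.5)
Since $\|{\bf z}\|_{L^\infty(\Omega)}\le1$, Proposition \ref{Dist} (with Remark \ref{R1}) already gives $({\bf z},Du)\le|Du|$ as measures, so only the reverse inequality $\int_\Omega\varphi|Du|\le\int_\Omega\varphi\,({\bf z},Du)$ for $0\le\varphi\in C^1_c(\Omega)$ is needed. I would obtain it from the inequality \eqref{IneqDu} proved in Lemma \ref{IdUZ}, combined with identity \eqref{A1}, the definition of the pairing, and the limit $k\to\infty$.

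\emph{Case $0<\gamma\le1$.} Definition \ref{zDu} gives
$$
\int_\Omega\varphi\,({\bf z},DT_k(u))=-\int_\Omega\varphi\,(T_k(u))^*\,\hbox{div}\,{\bf z}-\int_\Omega T_k(u)\,{\bf z}\cdot\nabla\varphi\,dx .
$$
By \eqref{A1}, the first term on the right equals $\lambda\int_\Omega\frac{s}{|x|}T_k(u)\varphi\,dx+\int_\Omega\frac{f}{u^\gamma}T_k(u)\varphi\,dx$. Inserting this into \eqref{IneqDu} yields $\int_\Omega\varphi|DT_k(u)|\le\int_\Omega\varphi\,({\bf z},DT_k(u))$, so $({\bf z},DT_k(u))=|DT_k(u)|$ for every $k$. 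I then let $k\to\infty$. Since $u\in BV(\Omega)$, we have $|DT_k(u)|\to|Du|$ weakly-$*$ (indeed $\int_\Omega\varphi|DT_k(u)|\uparrow\int_\Omega\varphi|Du|$ by the coarea formula). On the other side, $({\bf z},DT_k(u))\to({\bf z},Du)$ as distributions: the term $T_k(u){\bf z}\cdot\nabla\varphi$ converges by dominated convergence, and $\int_\Omega\varphi(T_k(u))^*\hbox{div}\,{\bf z}\to\int_\Omega\varphi u^*\hbox{div}\,{\bf z}$ by dominated convergence with respect to $|\hbox{div}\,{\bf z}|$. That dominated convergence uses $u^*\in L^1(\Omega,\hbox{div}\,{\bf z})$ from \eqref{UL1} and the pointwise bound $|(T_k(u))^*|\le u^*$, which holds $\mathcal H^{N-1}$-a.e.\ and hence $|\hbox{div}\,{\bf z}|$-a.e., because $\hbox{div}\,{\bf z}$ vanishes on sets of zero $\mathcal H^{N-1}$ measure. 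This is exactly the content of \cite[Proposition 2.20]{OrtizPetitta2024}. Passing to the limit gives \eqref{Du}.

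\emph{Case $\gamma>1$.} I repeat the argument with $T_k(u)$ replaced by $(T_k(u))^\gamma$. Inequality \eqref{Ek} plays the role of \eqref{IneqDu}, and the identity just after \eqref{Ek} plays the role of \eqref{A1}. Together they give $({\bf z},D(T_k(u))^\gamma)=|D(T_k(u))^\gamma|$. Letting $k\to\infty$ with $u^\gamma\in BV(\Omega)$, and using $(u^\gamma)^*\in L^1(\Omega,\hbox{div}\,{\bf z})$ from the last display of Lemma \ref{IdUZ}, yields $({\bf z},Du^\gamma)=|Du^\gamma|$. To transfer this to $u$, I would use the chain rule for the pairing: for a monotone Lipschitz map applied on $\{u>c\}$, with $u\ge c>0$ on compact subsets where applicable, the vector field ${\bf z}$ acts as $Du/|Du|$ on $Du^\gamma$ and on $Du$ simultaneously, since $\theta({\bf z},D\Phi(v),x)=\theta({\bf z},Dv,x)$ $|Dv|$-a.e.\ for increasing $\Phi$. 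This gives \eqref{Du} locally, i.e.\ as measures on $\Omega$ with $u\in BV_{loc}$.

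\emph{Main obstacle.} The delicate step is this last transfer from $u^\gamma$ to $u$. It needs the invariance of the Anzellotti density $\theta$ under increasing reparametrizations, including on the jump part, where the ordering of traces $u^\pm$ is preserved by $s\mapsto s^\gamma$. I would prove that invariance first for the truncations $T_k$ and then pass to the limit.
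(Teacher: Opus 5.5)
Your argument for $0<\gamma\le1$ is correct and is essentially the paper's. The paper lets $k\to\infty$ directly in \eqref{IneqDu} and then invokes \eqref{B1}, while you first obtain $({\bf z},DT_k(u))=|DT_k(u)|$ from \eqref{A1}; the ingredients are the same. For $\gamma>1$, your first step is exactly \eqref{IdT}, and passing to $({\bf z},Du^\gamma)=|Du^\gamma|$ is also fine.

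\textbf{The gap is the transfer from $u^\gamma$ to $u$.}
\begin{itemize}
\item Anzellotti's invariance of $\theta$ needs a Lipschitz map, and $s\mapsto s^{1/\gamma}$ is not Lipschitz at $0$ when $\gamma>1$.
\item Going the other way, you could apply the Lipschitz map $s\mapsto s^\gamma$ to $T_k(u)$. Then you only learn $\theta({\bf z},DT_k(u),\cdot)=1$ on the part of $|DT_k(u)|$ that is absolutely continuous with respect to $|D(T_k(u))^\gamma|$. This can fail precisely on $\{\widetilde{T_k(u)}=0\}$, where the derivative of $s^\gamma$ vanishes.
\item Your remedy, $u\ge c>0$ on compact subsets, is not available. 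The bounds $u_p\ge c_\omega$ come from the strong maximum principle at fixed $p$ and are not uniform as $p\to1^+$. For a merely nonnegative $f$, nothing prevents the limit from vanishing on a set of positive measure; this is why $\chi_{\{u>0\}}$ appears in Definition \ref{DS}.
\item The jump part, which you single out as the delicate point, is harmless: difference quotients of a strictly increasing map are positive. The real difficulty is the diffuse part of $|DT_k(u)|$ on the zero level set.
\end{itemize}

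\textbf{Missing idea (the paper's route).} Right after \eqref{IdT} the paper observes two facts. First, $\nabla T_k(u)=0$ a.e.\ on $\{T_k(u)=0\}$. Second, $|D^cT_k(u)|(\{\widetilde{T_k(u)}\in E\})=0$ whenever $|E|=0$ \cite[Proposition 3.92]{AmbrosioFuscoPallara}. Hence $|DT_k(u)|$ does not charge $\{\widetilde{T_k(u)}=0\}$. So the factor $m_k(u)$ with $|Dg(T_k(u))|=m_k(u)\,|DT_k(u)|$ is nonzero $|DT_k(u)|$-a.e. This turns \eqref{IdT} into $({\bf z},DT_k(u))=|DT_k(u)|$, and one then lets $k\to\infty$.

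\textbf{Alternative fix in your own direction.} You can avoid the zero set altogether.
\begin{itemize}
\item Apply the Lipschitz nondecreasing map $\Psi_c(s)=\max(s,c^\gamma)^{1/\gamma}$ to $u^\gamma$. This gives $({\bf z},D(u-c)^+)=({\bf z},D\max(u,c))=|D(u-c)^+|$.
\item Let $c\downarrow0$. By the coarea formula, $|D(u-c)^+|\uparrow|Du|$ locally.
\item The pairings converge by dominated convergence, using $((u-c)^+)^*\le u^*\le 1+(u^\gamma)^*$ $\mathcal{H}^{N-1}$-a.e.\ together with $(u^\gamma)^*\in L^1(\Omega,{\rm div}\,{\bf z})$ from Lemma \ref{IdUZ}.
\end{itemize}
This route needs neither the nondegeneracy of $m_k(u)$ nor a positive lower bound on $u$.
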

\dem First of all, let $0\leq\varphi\in C^\infty_c(\Omega)$. Assume that $0<\gamma\leq 1$. Then, from \eqref{IneqDu}, 
$$
\int_{\Omega}\varphi|DT_k(u)|\leq \lambda\int_{\Omega}\frac{1}{|x|}s(x)T_k(u)\varphi\,dx+\int_{\Omega}\frac{f}{u^\gamma}T_k(u)\varphi\,dx-\int_{\Omega}T_k(u){\bf z}\cdot \nabla \varphi\,dx
$$
and, letting $k\to\infty$, we have that
$$
\int_{\Omega}\varphi|Du|\leq\lambda\int_{\Omega}\frac{1}{|x|}s(x)u\varphi\,dx+\int_{\Omega}f u^{1-\gamma}\varphi\,dx -\int_{\Omega}u{\bf z}\cdot\nabla \varphi\,dx
$$
and so, using \eqref{B1}, 
$$
 \int_{\Omega}\varphi|Du|\leq -\int_{\Omega}\varphi u^*\hbox{div}\,{\bf z}-\int_{\Omega} u{\bf z}\cdot \nabla \varphi\,dx=\langle ({\bf z}, Du), \varphi\rangle.
 $$
 
 On the other hand, since $\|{\bf z}\|_\infty\leq 1$, by Proposition \ref{Dist} and Remark \ref{R1}, we obtain that
 $$
 ({\bf z}, Du)\leq |Du|\quad\hbox{ as Radon measure }\;\Omega.
 $$
 So the identity holds for this case.
 
 {Suppose that $\gamma>1$. {From \eqref{Ek}}, by Lemma \ref{IdUZ} with $(T_k(u))^\gamma$ instead of $u^\gamma$, we have 
 \begin{align}
 \int_{\Omega}\varphi|D (T_k(u))^\gamma|&\leq \lambda\int_{\Omega}\frac{(T_k(u))^\gamma}{|x|}\varphi\,dx+\int_{\Omega}f \varphi\,dx-\int_{\Omega}(T_k(u))^\gamma{\bf z}\cdot\nabla \varphi\,dx\\
 &=-\int_{\Omega}(T_k(u)^\gamma)^*\hbox{div}\,{\bf z}-\int_{\Omega}(T_k(u))^\gamma{\bf z}\cdot\nabla \varphi\,dx\\
 &=\int_{\Omega}\varphi({\bf z}, D(T_k(u))^\gamma).
 \end{align}
 By Proposition \ref{Dist}  the inequality contrary holds. Thus
\begin{equation}
\label{IdT}
  \int_{\Omega}\varphi({\bf z}, D(T_k(u))^\gamma)=\int_{\Omega}\varphi |D(T_k(u))^\gamma|\quad\hbox{ for all  }\;0\leq\varphi\in C_c^\infty(\Omega).
\end{equation} 
 On the other hand, since $u\in BV_{\rm loc}(\Omega)$ and the map
  \begin{equation}
  \label{g}
  g(s)=\left\{
  \begin{array}{lcr}
  \left(\frac{s}{k}\right)^{1/\gamma}&\hbox{ if }& s> k,\\
   \left(\frac{s}{k}\right)^\gamma&\hbox{ if }&0\leq s\leq k,\\
    -\left(\frac{-s}{k}\right)^\gamma&\hbox{ if }& -k\leq s<0,\\
  -\left(\frac{-s}{k}\right)^{1/\gamma}&\hbox{ if }&s<-k.
  \end{array}
  \right.
  \end{equation}
 is increasing and Lipschitz, one has that $g(T_k(u))\in BV_{\rm loc}(\Omega)$.   Moreover, by \cite[Proposition 2.8]{Anzellotti1983} we have 
 $$
\theta({\bf z}, Dg(T_k(u)), x)=\theta({\bf z}, DT_k(u), x)\qquad |DT_k(u))|-\hbox{ a.e. in }\; \omega,
$$
for all open set $\omega\subset\subset\Omega$. But this, in turn, using \eqref{IdT}, implies that
$$
({\bf z}, g(T_k(u)))=|Dg(T_k(u))|\quad\hbox{ as Radon measures on }\;\Omega.
$$
Let 
$$
m_k(u):=g'(\widetilde{T_k(u)})\chi_{\Omega\setminus S_{T_k(u)}}+\frac{|g(T_k(u^+))-g(T_k(u^-))|}{|T_k(u^+)-T_k(u^-)|}\chi_{J_{T_k(u)}\cap J_u}\neq 0\qquad|DT_k(u)|-\hbox{ a.e. }x\in\omega,
 $$
 for all $\omega\subset\subset\Omega$; {here we used that $g$ is strictly increasing on $(0,k]$ and that, although $g'(0)=0$ when $\gamma>1$, the measure $|DT_k(u)|$ does not charge the level set $\{\widetilde{T_k(u)}=0\}$, since $\nabla T_k(u)=0$ a.e. there and $|D^cT_k(u)|(\{\widetilde{T_k(u)}\in E\})=0$ whenever $|E|=0$ (see \cite[Proposition 3.92]{AmbrosioFuscoPallara}).} Then
\begin{align}
\int_{\omega}({\bf z}, DT_k(u))&=\int_{\omega}\theta({\bf z}, DT_k(u), x)|DT_k(u)|\\
&=\int_{\omega} \theta({\bf z}, Dg(T_k(u)), x)|DT_k(u)|\\
&=\int_{\omega}\frac{1}{m_k(u)}\theta({\bf z}, Dg(T_k(u)), x)|Dg(T_k(u))|\\
&=\int_{\omega}\frac{1}{m_k(u)}({\bf z}, Dg(T_k(u)))\\
&=\int_{\omega}\frac{1}{m_k(u)}|Dg(T_k(u))|\\
&=\int_{\omega}|DT_k(u)|.
\end{align}
Letting $k\to \infty$, using \cite[Proposition 2.20]{OrtizPetitta2024}, we obtain that
$$
 \int_{\omega}({\bf z}, Du)=\int_{\omega}|Du|,
$$
for all $\omega\subset\subset\Omega$. Hence $({\bf z}, Du)=|Du|$ as Radon measures on $\Omega$.
   }
   
\fim

\begin{lemma}   For $0<\gamma\leq 1$, let $u\in BV(\Omega)$ and ${\bf z}\in X_{\mathcal{M}_{loc}}(\Omega)$  be as in \eqref{u2} and \eqref{Cz}, respectively, {one of the following holds:
\begin{equation}
\label{WT}
\lim_{\rho\downarrow0}\rho^{-N}\int_{\Omega\cap B_{\rho}(x)}u(y)\,dy=0\quad\text{ or }\quad \phi(x)+[\phi{\bf z},\nu](x)=0\quad\mathcal{H}^{N-1}-\text{ a.e.}\;x\in \partial\Omega,
\end{equation}
for all $0\leq \phi\in C^1(\overline{\Omega}) \cap L^1(\Omega,\hbox{div}\,{\bf z})$ with $ \sup_{\Omega}\phi{\leq} \phi\lfloor_{\partial\Omega}$. For $\gamma>1$, let $u^\gamma\in BV(\Omega)$ and ${\bf z}\in X_{\mathcal{M}_{loc}}(\Omega)$ be as in \eqref{u4} and \eqref{Cz1}, respectively, \eqref{WT} also holds .}

\end{lemma}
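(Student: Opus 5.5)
The plan follows the standard route for boundary conditions in the $1$-Laplacian framework (as in \cite{DeCiccoGiachettiSegura2019, dgop}). We compare the Green formula for $\phi u$ (resp. $\phi u^\gamma$) tested against ${\bf z}$ with a lower semicontinuity inequality that keeps the boundary term. Fix $0\leq\phi\in C^1(\overline\Omega)\cap L^1(\Omega,{\rm div}\,{\bf z})$ with $\sup_\Omega\phi\leq\phi\lfloor_{\partial\Omega}$.

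We start from the $p$-problem with test function $\phi\,T_k(u_p)$ (resp. $\phi\,T_k(u_p)^\gamma$ when $\gamma>1$). Unlike in Lemma \ref{IdUZ}, we do not localize, since $u_p$ (resp. $u_p^{(\gamma+p-1)/p}$) vanishes on $\partial\Omega$. Young's inequality gives
$$
\int_\Omega\phi|\nabla T_k(u_p)|\,dx+\int_\Omega T_k(u_p)|\nabla u_p|^{p-2}\nabla u_p\cdot\nabla\phi\,dx\leq\frac{\lambda}{p}\int_\Omega\frac{u_p^{p-1}T_k(u_p)\phi}{|x|^p}\,dx+\frac1p\int_\Omega\frac{fT_k(u_p)\phi}{u_p^\gamma}\,dx+\frac{p-1}{p}\int_\Omega\phi\,dx.
$$
As $p\to1^+$, the left-hand side is estimated from below by the lower semicontinuity functional $\int_\Omega\phi|DT_k(u)|+\int_{\partial\Omega}\phi\,T_k(u)\,d\mathcal H^{N-1}$, using the $W^{1,1}_0$-extension of $T_k(u_p)$ by zero outside $\Omega$. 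Lemma \ref{Convs} and the dominated convergence argument of Lemma \ref{IdUZ} handle the right-hand side, and the second term converges to $\int_\Omega T_k(u){\bf z}\cdot\nabla\phi$. For $\gamma>1$ this last step also requires passing through ${\bf z}_\epsilon$ and letting $\epsilon\to0$ as in \eqref{Ek}.

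Next we compare with the Green formula of Proposition \ref{uz} applied to $T_k(u)$ and the field $\phi{\bf z}$. Here ${\rm div}(\phi{\bf z})=\phi\,{\rm div}\,{\bf z}+{\bf z}\cdot\nabla\phi$, and the integrability needed is exactly $\phi\in L^1(\Omega,{\rm div}\,{\bf z})$. We insert identity \eqref{A1} (now with $\phi$ up to the boundary) and Lemma \ref{ZDu}, which gives $\phi({\bf z},DT_k(u))=\phi|DT_k(u)|$ via the $g$-argument for $\gamma>1$. Everything interior cancels, and we are left with
$$
\int_{\partial\Omega}T_k(u)\big(\phi+[\phi{\bf z},\nu]\big)\,d\mathcal H^{N-1}\leq0 .
$$
On the other hand, $\|{\bf z}\|_\infty\leq1$ together with $\phi\geq0$ gives $[\phi{\bf z},\nu]\geq-\phi$ pointwise $\mathcal H^{N-1}$-a.e. on $\partial\Omega$. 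This uses $|[\phi{\bf z},\nu]|\leq\|\phi{\bf z}\|_\infty$ and the assumption $\sup_\Omega\phi\leq\phi\lfloor_{\partial\Omega}$, which lets the boundary value of $\phi$ dominate $\|\phi{\bf z}\|_{L^\infty}$. So the integrand is nonnegative, hence it vanishes a.e. Thus, at $\mathcal H^{N-1}$-a.e. $x$, either the trace of $T_k(u)$ is $0$ or $\phi(x)+[\phi{\bf z},\nu](x)=0$. Since $k$ is arbitrary and the trace of $T_k(u)$ vanishes iff that of $u$ (resp. $u^\gamma$) does, Proposition \ref{BT} turns this into the first alternative in \eqref{WT}.

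The main obstacle is the passage to the limit producing the boundary term. For $\gamma>1$ we only know $u\in BV_{\rm loc}$, ${\bf z}$ has merely locally finite divergence, and $[\phi{\bf z},\nu]$ need not be integrable, so we cannot rely on a global Green formula for ${\bf z}$ itself. I would try first to work with the truncated power $T_k(u)^\gamma$, which lies in $BV(\Omega)\cap L^\infty$. For it, Proposition \ref{uz} applies to $\phi{\bf z}\in X_{\mathcal M}(\Omega)$ thanks to $\phi\in L^1({\rm div}\,{\bf z})$ and Lemma \ref{IdUZ}. The approximation through $\Omega_\epsilon$ is controlled by choosing the cut-off only in the ${\bf z}_\epsilon$ step while keeping the boundary term from the global $W^{1,p}_0$ bound on $u_p^{(\gamma+p-1)/p}$.
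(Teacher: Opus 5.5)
Your proposal is correct and follows essentially the paper's own argument. You test with $\phi\,T_k(\cdot)$ (resp.\ $\phi\,T_k(\cdot)^\gamma$), apply Young's inequality and lower semicontinuity while keeping the boundary term, and combine Lemma \ref{IdUZ}, Lemma \ref{ZDu} and the Green formula to reach $\int_{\partial\Omega}T_k(u)\big(\phi+[\phi{\bf z},\nu]\big)\,d\mathcal H^{N-1}\leq0$. You then conclude via $|[\phi{\bf z},\nu]|\leq\sup_\Omega\phi\leq\phi\lfloor_{\partial\Omega}$. The differences from the paper are cosmetic. The paper tests at the level of $u_n$, where $\phi\,T_k(u_n)\in W^{1,p}_0(\Omega)$ is admissible; \eqref{WSp} alone does not give you this for $\phi$ not vanishing on $\partial\Omega$. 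It also applies the Green formula to $\phi\,T_k(u)$ and ${\bf z}$ rather than to $T_k(u)$ and $\phi{\bf z}$, and it lets $k\to\infty$ instead of reading off the alternative at a fixed $k$. For $\gamma>1$, do not forget the short H\"older step that passes from $u^\gamma$ to $u$ in the first alternative.
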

\dem
  First, let $0\leq \phi\in C^1(\overline{\Omega})\cap L^1(\Omega, \hbox{div}\,{\bf z})$. Suppose that $0<\gamma\leq 1$,. Use  $\phi T_k(u_n)$ as test function in \eqref{AuxP} to obtain
\begin{equation*}
\int_{\Omega}\phi|\nabla T_k(u_n)|^p\,dx+\int_{\Omega}T_k(u_n)|\nabla u_n|^{p-2}\nabla u_n\cdot \nabla \phi\,dx=\lambda\int_{\Omega}\frac{u_n^{p-1}\phi T_k(u_n)}{|x|^p+1/n}\,dx+\int_{\Omega}\frac{f_n\phi T_k(u_n)}{(u_n+1/n)^\gamma}\,dx.
\end{equation*}
By Young's inequality, it follows that
\begin{align}
\label{E}
&\int_{\Omega}\phi|\nabla T_k(u_n)|\leq \frac{1}{p}\int_{\Omega}\phi |\nabla T_k(u_n)|^p\,dx+\frac{p-1}{p}\int_{\Omega}\phi\,dx\\
\nonumber&=\frac{\lambda}{p}\int_{\Omega}\frac{u_n^{p-1}\phi T_k(u_n) }{|x|^p+1/n}\,dx+\frac{1}{p}\int_{\Omega}\frac{f_n\phi T_k(u_n)}{(u_n+1/n)^\gamma}\,dx-\int_{\Omega}T_k(u_n)|\nabla u_n|^{p-2}\nabla u_n\cdot \nabla \phi\,dx+\frac{p-1}{p}\int_{\Omega}\phi\,dx.
\end{align}
Letting $n\to\infty$ in \eqref{E}, we get
\begin{align}
\int_{\Omega}\phi|\nabla T_k(u_p)|\,dx&\leq\frac{\lambda}{p}\int_{\Omega}\frac{1}{|x|^p}u_p^{p-1}\phi T_k(u_p)\,dx+\frac{1}{p}\int_{\Omega}\frac{f\phi T_k(u_p)}{u_p^\gamma}\,dx-\int_{\Omega}T_k(u_p)|\nabla u_p|^{p-2}\nabla u_p\cdot \nabla \phi\,dx\\
&\qquad\qquad\qquad+\frac{p-1}{p}\int_{\Omega}\phi\,dx.
\end{align}
Since $u_p=0$ on $\partial\Omega$, one has
$$
\int_{\Omega}\phi|\nabla T_k(u_p)|+\int_{\partial\Omega}\phi|T_k(u_p)|\,d\mathcal{H}^{N-1}\leq \frac{\lambda}{p}\int_{\Omega}\frac{1}{|x|^p}u_p^{p-1}\phi T_k(u_p)\,dx+\frac{1}{p}\int_{\Omega}\frac{f\phi T_k(u_p)}{u_p^\gamma}\,dx+\frac{p-1}{p}\int_{\Omega}\phi\,dx.
$$
Passing to the limit as $p\to1^+$, we infer
\begin{align}
\int_{\Omega}\phi |DT_k(u)|+\int_{\partial\Omega}\phi |T_k(u)|\,d\mathcal{H}^{N-1}&\leq \lambda\int_{\Omega}\frac{1}{|x|}s(x)\phi T_k(u)\,dx+\int_{\Omega}\frac{f\phi T_k(u)}{u^\gamma}\,dx-\int_{\Omega}T_k(u){\bf z}\cdot\nabla \phi\,dx\\
&=-\int_{\Omega}(\phi T_k(u))^*\hbox{div}\,{\bf z}-\int_{\Omega}T_k(u){\bf z}\cdot\nabla \phi\,dx\\
&=-\int_{\partial\Omega}[T_k(u)\phi {\bf z},\nu]\,d\mathcal{H}^{N-1}+\int_{\Omega}({\bf z}, D(\phi T_k(u)))-\int_{\Omega} T_k(u){\bf z}\cdot \nabla \phi\,dx\\
&=-\int_{\partial\Omega}T_k(u)[\phi {\bf z},\nu]\,d\mathcal{H}^{N-1}+\int_{\Omega}\phi({\bf z}, DT_k(u))\\
&=-\int_{\partial\Omega}T_k(u)[\phi {\bf z},\nu]\,d\mathcal{H}^{N-1}+\int_{\Omega}\phi|DT_k(u)|,
\end{align}
where we used Lemma \ref{IdUZ} with $\phi T_k(u)$ instead of $u$,  {Proposition \ref{uz}}, \cite[Lemma 5.6]{Caselles2011},  and Lemma \ref{ZDu} with $T_k(u)$ instead of $u$. Hence
\begin{equation}
\label{1inq}
\int_{\partial\Omega}(\phi |T_k(u)|+ T_k(u)[\phi {\bf z}, \nu])\,d\mathcal{H}^{N-1}\leq 0.
\end{equation}                                                                                                                          
On the other hand, suppose that $ \sup_{\Omega}\phi\leq \phi\lfloor_{\partial\Omega}$. Thus, using \cite[Theorem 2.1]{Anzellotti1983} and as $\|{\bf z}\|_{L^\infty(\Omega, \mathbb{R}^N)}\leq 1$, it follows that
$$
|T_k(u)[\phi {\bf z}, \nu ]|\leq |T_k(u)|\sup_{\Omega}\phi\leq|T_k(u)|\phi\lfloor_{\partial\Omega}\quad\mathcal{H}^{N-1}-\hbox{a.e. in }\;\partial\Omega 
$$
and so, by \eqref{1inq},
\begin{equation}
\label{Tphi}
\int_{\partial\Omega}(\phi |T_k(u)|+ T_k(u)[\phi {\bf z}, \nu])\,d\mathcal{H}^{N-1}= 0.
\end{equation}

{Letting $k\to\infty$ in \eqref{Tphi}, since $(T_k(u))_{k\geq1}$ strictly converges in $BV(\Omega)$ to $u$,  we obtain
$$
\int_{\partial\Omega}(\phi u+u[\phi {\bf z}, \nu])\,d\mathcal{H}^{N-1}=0.
$$
Hence follows our result. }

Assume that $\gamma>1$. Use $\phi (T_k(u))^\gamma$ as test function in \eqref{AuxP} to obtain
\begin{align}
&\gamma\left(\frac{p}{\gamma+p-1}\right)^p\int_{\Omega}\phi|\nabla (T_k(u_n))^{(\gamma+p-1)/p}|^p\,dx+\int_{\Omega}(T_k(u_n))^\gamma|\nabla u_n|^{p-2}\nabla u_n\cdot \nabla \phi\,dx\\
&\quad=\lambda\int_{\Omega}\frac{u_n^{p-1}\phi (T_k(u_n))^\gamma}{|x|^p+1/n}\,dx+\int_{\Omega}\frac{f_n\phi (T_k(u_n))^\gamma}{(u_n+1/n)^\gamma}\,dx.
\end{align}

By Young inequality, one has that
\begin{align}
&\int_{\Omega}\phi |\nabla (T_k(u_n))^{(\gamma+p-1)/p}|\,dx\leq \frac{1}{p}\int_{\Omega}\phi |\nabla (T_k(u_n))^{(\gamma+p-1)/p}|^p+\frac{p-1}{p}\int_{\Omega}\phi\,dx\\
&=\frac{\lambda}{\gamma p}\left(\frac{\gamma+p-1}{p}\right)^p\int_{\Omega}\frac{u_n^{p-1}\phi (T_k(u_n))^\gamma}{|x|^p+1/n}\,dx+\frac{1}{\gamma p}\left(\frac{\gamma+p-1}{p}\right)^p\int_{\Omega}\frac{f_n\phi (T_k(u_n))^\gamma}{(u_n+1/n)^\gamma}\,dx\\
&\qquad-\frac{1}{\gamma p}\left(\frac{\gamma+p-1}{p}\right)^p\int_{\Omega}(T_k(u_n))^\gamma|\nabla u_n|^{p-2}\nabla u_n\cdot \nabla \phi\,dx+\frac{p-1}{p}\int_{\Omega}\phi\,dx.
\end{align}

Letting $n\to\infty$ above, one has
\begin{align}
\int_{\Omega}\phi |\nabla (T_k(u_p))^{(\gamma+p-1)/p}|\,dx&\leq \frac{1}{\gamma p}\left(\frac{\gamma+p-1}{p}\right)^p\left[\lambda\int_{\Omega}\frac{u_p^{p-1}\phi(T_k(u_p))^\gamma}{|x|^p}\,dx+\int_{\Omega}\frac{f\phi(T_k(u_p))^\gamma}{u_p^\gamma}\,dx\right.\\
&\qquad\left.-\int_{\Omega}(T_k(u_p))^\gamma|\nabla u_p|^{p-2}\nabla u_p\cdot\nabla \phi\,dx\right]+\frac{p-1}{p}\int_{\Omega}\phi\,dx.
\end{align}

Passing to the limit, as $p\to 1^+$, above, and after some simplifications we obtain
\begin{align}
\int_{\Omega}\phi |D(T_k(u))^\gamma|+\int_{\partial\Omega}\phi||(T_k(u))^\gamma|\,d\mathcal{H}^{N-1}&\leq\lambda\int_{\Omega}\frac{s(x)\phi(T_k(u))^\gamma}{|x|}\,dx+\int_{\Omega}\frac{f\phi(T_k(u))^\gamma}{u^\gamma}\,dx\\
&\qquad-\int_{\Omega}(T_k(u))^\gamma{\bf z}\cdot\nabla \phi\,dx,
\end{align}
which, in turn, implies that
\begin{equation}
\label{G1}
\int_{\partial\Omega}\phi|(T_k(u))^\gamma|\,d\mathcal{H}^{N-1}+\int_{\partial\Omega}(T_k(u))^\gamma[\phi{\bf z},\nu]\,d\mathcal{H}^{N-1}\leq0.
\end{equation}

On the other hand, suppose that $\phi\lfloor_{\partial\Omega}\geq \sup_{\Omega}\phi$. Thus, using \cite[Theorem 2.1]{Anzellotti1983} and as $\|{\bf z}\|_{L^\infty(\Omega, \mathbb{R}^N)}\leq 1$, we obtain
$$
|(T_k(u))^\gamma[\phi{\bf z}, \nu]|\leq |(T_k(u))^\gamma| \phi\lfloor_{\partial\Omega}\quad\mathcal{H}^{N-1}-\hbox{ a.e. in }\quad\partial\Omega.
$$
Hence, by \eqref{G1},
$$
\int_{\partial\Omega}\phi|(T_k(u))^\gamma|\,d\mathcal{H}^{N-1}+\int_{\partial\Omega}(T_k(u))^\gamma[\phi{\bf z},\nu]\,d\mathcal{H}^{N-1}=0,
$$
and so, letting $k\to \infty$, 
{
$$
\int_{\partial\Omega}(\phi u^\gamma+u^\gamma[\phi{\bf z}, \nu])\,d\mathcal{H}^{N-1}=0.
$$
But this, in turn, implies
$$
u^\gamma(x)=0\quad\hbox{ or }\quad\phi(x)+[\phi{\bf z}, \nu](x)=0
$$
and if $u^\gamma=0\quad\mathcal{H}^{N-1}a.e.$ in $\partial\Omega$, by H\"{o}lder inequality one has
$$
\rho^{-N}\int_{\Omega\cap B_{\rho}(x)}u(y)\,dy\leq\left(\rho^{-N}\int_{\Omega\cap B_{\rho}(x)}u^\gamma(y)\,dy\right)^{1/\gamma}(\rho^{-N}|\Omega\cap B_{\rho}(x)|)^{1-1/\gamma}\to0\quad\hbox{ as }\;\rho\to0.
$$
}

\fim
 
\begin{lemma}
\label{chiSol}
  For $0<\gamma\leq 1$, let $u\in BV(\Omega)$ and ${\bf z}\in X_{\mathcal{M}_{loc}}(\Omega)$ are as in \eqref{u2} and \eqref{Cz}, respectively, we have that $\chi_{\{u>0\}}\in BV_{loc}(\Omega)$ and the following identity holds
\begin{equation}
\label{chiu}
-\chi^*_{\{u>0\}}\hbox{div}\,{\bf z}=\frac{\lambda}{|x|}\chi_{\{u>0\}}+\frac{f}{u^\gamma}\quad\hbox{ in }\; \mathcal{D}'(\Omega).
\end{equation}
For $\gamma>1$, let $u^\gamma\in BV(\Omega)$ and ${\bf z}\in X_{\mathcal{M}_{loc}}(\Omega)$ be as in \eqref{u4} and \eqref{Cz1}, respectively, one also has that \eqref{chiu} holds.
\end{lemma}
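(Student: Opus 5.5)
The plan is to approximate $\chi_{\{u>0\}}$ by the bounded $BV_{\rm loc}$ functions $S_\delta(u):=\frac{1}{\delta}T_\delta(u)$ (for $\gamma>1$, by $S_\delta(u^\gamma)$ or $\frac1\delta T_\delta(u)$ directly, since $u\in BV_{\rm loc}(\Omega)$), and to pass to the limit as $\delta\to0^+$ in the identities already proved.

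\textbf{Step 1: the local BV bound.} Since $\delta\mapsto\frac1\delta T_\delta(s)$ is Lipschitz for each fixed $\delta>0$, Lemma \ref{ZDu} and \cite[Proposition 2.8]{Anzellotti1983} give $({\bf z},DS_\delta(u))=|DS_\delta(u)|$, exactly as in the chain-rule argument with $g$ in the proof of Lemma \ref{ZDu}. Next I test \eqref{A1}, which holds for any truncation level, with $k=\delta$ and divide by $\delta$. This yields, for $0\le\varphi\in C^1_c(\Omega)$,
$$
-\int_\Omega\varphi\,S_\delta(u)^*\,{\rm div}\,{\bf z}=\lambda\int_\Omega\frac{s(x)}{|x|}S_\delta(u)\varphi\,dx+\int_\Omega\frac{f}{u^\gamma}S_\delta(u)\varphi\,dx .
$$
Combining this with $\int\varphi|DS_\delta(u)|=\langle({\bf z},DS_\delta(u)),\varphi\rangle$ and Definition \ref{zDu}, I get
$$
\int_\Omega\varphi|DS_\delta(u)|\le|\lambda|\int_\Omega\frac{\varphi}{|x|}\,dx+\int_\Omega\frac{f}{u^\gamma}\varphi\,dx+\int_\Omega|\nabla\varphi|\,dx .
$$
Here $0\le S_\delta(u)\le1$ and $\|{\bf z}\|_\infty\le1$. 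The right-hand side is finite because $1/|x|\in L^1_{\rm loc}$, and $fu^{-\gamma}\in L^1_{\rm loc}(\Omega)$ by Fatou together with \eqref{SupS}/\eqref{SupS1} tested against nonnegative cutoffs. The bound is uniform in $\delta$, and $S_\delta(u)\to\chi_{\{u>0\}}$ pointwise and in $L^1_{\rm loc}$. Lower semicontinuity of the total variation then gives $\chi_{\{u>0\}}\in BV_{\rm loc}(\Omega)$.

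\textbf{Step 2: the limit in the identity.} On the right-hand side, $s(x)S_\delta(u)\to\chi_{\{u>0\}}$, since $s=1$ where $u>0$, and $\frac{f}{u^\gamma}S_\delta(u)\to\frac{f}{u^\gamma}$ a.e., using $u>0$ a.e. on $\{f>0\}$. Dominated convergence applies with dominants $|\varphi|/|x|$ and $f u^{-\gamma}|\varphi|$. On the left-hand side I need $S_\delta(u)^*\to\chi_{\{u>0\}}^*$ $|{\rm div}\,{\bf z}|$-a.e. on ${\rm supp}\,\varphi$, with $0\le S_\delta(u)^*\le1$, so that dominated convergence with respect to the locally finite measure ${\rm div}\,{\bf z}$ gives \eqref{chiu}.

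\textbf{Main obstacle.} The delicate point is the pointwise convergence of precise representatives. Since ${\rm div}\,{\bf z}$ vanishes on $\mathcal H^{N-1}$-null sets (it is a divergence-measure field, \cite{ChenFried1999}), it suffices to check convergence $\mathcal H^{N-1}$-a.e., hence outside $S_u\setminus J_u$. There are two cases. At approximate continuity points, $S_\delta(\tilde u)\to\chi_{\{\tilde u>0\}}$, which matches $\chi^*_{\{u>0\}}$ provided $\tilde u>0$ forces density one of $\{u>0\}$; this holds by approximate continuity. At jump points the average $\frac12(S_\delta(u^+)+S_\delta(u^-))$ converges to $\frac12(\chi_{\{u^+>0\}}+\chi_{\{u^->0\}})$, which is the precise value of $\chi_{\{u>0\}}$ there. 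The first thing I would try, if the representative matching fails at points where $\tilde u=0$ but $\{u>0\}$ has positive density, is to use the coarea/Vol'pert chain rule \cite[Theorem 3.99]{AmbrosioFuscoPallara} to show that this set is $\mathcal H^{N-1}$-negligible for $BV_{\rm loc}$ functions.

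\textbf{The case $\gamma>1$.} The same argument runs with \eqref{A1} replaced by its $\gamma>1$ analogue. One uses $(T_\delta(u))^\gamma/\delta^\gamma\to\chi_{\{u>0\}}$ and the identity preceding \eqref{2D}, noting that the $f$-term there reads $\frac{f}{u^\gamma}(T_\delta(u))^\gamma\delta^{-\gamma}$.
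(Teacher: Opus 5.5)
\textbf{There is a genuine gap in Step 2, at exactly the point you flag.} The convergence $S_\delta(u)^*\to\chi^*_{\{u>0\}}$ $|{\rm div}\,{\bf z}|$-a.e.\ is false in general, and so is your fallback.

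Take a point $x$ of approximate continuity with $\tilde u(x)=0$. Your limit there is $0$, whereas $\chi^*_{\{u>0\}}(x)$ may equal $\frac12$ or $1$. For $u(y)=y_1^+$ (resp.\ $u(y)=|y_1|$) near the hyperplane $\{y_1=0\}$, every point of the hyperplane has $\tilde u=0$, while $\{u>0\}$ has density $\frac12$ (resp.\ $1$) there. This set has positive $\mathcal H^{N-1}$-measure, so no coarea/Vol'pert argument can make it negligible. Nothing prevents a solution from leaving its support in this way.

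Jump points fail as well. Suppose $u^-(x)=0<u^+(x)$ but $u>0$ a.e.\ also on the side where $u^-=0$, e.g.\ $u=1+y_1$ for $y_1>0$ and $u=|y_1|$ for $y_1<0$. Your formula gives $\frac12$, while $\chi^*_{\{u>0\}}(x)=1$.

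All you know a priori is $|{\rm div}\,{\bf z}|\ll\mathcal H^{N-1}$, and ${\rm div}\,{\bf z}$ may concentrate on $\{u=0\}$; this is why $(c)$ of Definition \ref{DS} carries $\chi^*_{\{u>0\}}$. So the direct limit only yields $-L\,{\rm div}\,{\bf z}=\frac{\lambda}{|x|}\chi_{\{u>0\}}+\frac{f}{u^\gamma}$ with $L:=\lim_{\delta\to0}S_\delta(u)^*$, which is not \eqref{chiu}. Your sketch for $\gamma>1$ has the same defect.

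\textbf{The missing idea is never to pass to the limit in a term containing the precise representative of $S_\delta(u)$.} Your Step 1 already gives, for $0\le\varphi\in C^1_c(\Omega)$, the equality
\[
\int_\Omega\varphi|DS_\delta(u)|=\lambda\int_\Omega\frac{s(x)}{|x|}S_\delta(u)\varphi\,dx+\int_\Omega\frac{f}{u^\gamma}S_\delta(u)\varphi\,dx-\int_\Omega S_\delta(u)\,{\bf z}\cdot\nabla\varphi\,dx .
\]
The paper obtains the corresponding inequality directly from \eqref{IneqDu} with $h_n(u)=nT_{1/n}(u^+)$. The right-hand side consists of Lebesgue integrals that converge by dominated convergence, and the left-hand side is lower semicontinuous. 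Hence
\[
\int_\Omega\varphi|D\chi_{\{u>0\}}|\le\lambda\int_\Omega\frac{\chi_{\{u>0\}}\varphi}{|x|}\,dx+\int_\Omega\frac{f}{u^\gamma}\chi_{\{u>0\}}\varphi\,dx-\int_\Omega\chi_{\{u>0\}}{\bf z}\cdot\nabla\varphi\,dx .
\]
By Proposition \ref{Dist}, $\langle({\bf z},D\chi_{\{u>0\}}),\varphi\rangle$ is bounded above by the left-hand side. Expanding this pairing via Definition \ref{zDu} gives the inequality ``$\le$'' in \eqref{chiu}.

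For the reverse inequality, test \eqref{SupS} with $(\rho_\epsilon\ast\chi_{\{u>0\}})\varphi$. Then use $\rho_\epsilon\ast\chi_{\{u>0\}}\to\chi^*_{\{u>0\}}$ $\mathcal H^{N-1}$-a.e., hence $|{\rm div}\,{\bf z}|$-a.e. Only the precise representative of $\chi_{\{u>0\}}$ itself ever appears, so no matching of representatives is needed. For $\gamma>1$ the same repair works starting from \eqref{Ek}.
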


\dem
For $0<\gamma\leq 1$. We start showing that $\chi_{\{u>0\}}$ belongs to $BV_{loc}(\Omega)$. Indeed, for each $n\in \mathbb{N}$, consider the function $h_n\>:\>[0, +\infty)\to [0, +\infty)$ defined by
$$
h_n(s)=nT_{\frac{1}{n}}(s^+).
$$
From \eqref{IneqDu}, it follows that
$$
 \int_{\Omega}\varphi |Dh_n(u)|\leq \lambda\int_{\Omega}\frac{1}{|x|} s(x) h_n(u)\varphi\,dx+\int_{\Omega}\frac{f}{u^\gamma}h_n(u)\varphi\,dx-\int_{\Omega}h_n(u){\bf z}\cdot\nabla\varphi,dx\quad\hbox{ for all }\; n\in \mathbb{N}
$$
and so, letting $n\to \infty$, by lower semicontinuity,
\begin{equation}
\label{KeyInq}
\int_{\Omega}\varphi|D\chi_{\{u>0\}}|\leq \lambda\int_{\Omega}\frac{1}{|x|}\chi_{\{u>0\}}\varphi\,dx+\int_{\Omega}\frac{f}{u^\gamma}\chi_{\{u>0\}}\varphi\,dx-\int_{\Omega}\chi_{\{u>0\}}{\bf z}\cdot\nabla\varphi\,dx.
\end{equation}
Hence $\chi_{\{u>0\}}\in BV_{loc}(\Omega)$.

In order to prove the identity \eqref{chiu}, by Proposition \ref{Dist} and Definition \ref{zDu}, \eqref{KeyInq} becomes
$$
-\int_{\Omega}\chi^*_{\{u>0\}}\varphi\hbox{div}\,{\bf z}\leq \lambda\int_{\Omega}\frac{1}{|x|}\chi_{\{u>0\}}\varphi\,dx+\int_{\Omega}\frac{f}{u^\gamma}\chi_{\{u>0\}}\varphi\,dx.
$$
On the other hand, arguing as for \eqref{1Est}, one can show that
$$
-\int_{\Omega}\chi^*_{\{u>0\}}\varphi\hbox{div}\,{\bf z}\geq \lambda\int_{\Omega}\frac{1}{|x|}\chi_{\{u>0\}}\varphi\,dx+\int_{\Omega}\frac{f}{u^\gamma}\chi_{\{u>0\}}\varphi\,dx.
$$
Thus
$$
-\int_{\Omega}\chi_{\{u>0\}}^*\varphi\hbox{div}\,{\bf z}=\lambda\int_{\Omega}\frac{1}{|x|}\chi_{\{u>0\}}\varphi\,dx+\int_{\Omega}\frac{f}{u^\gamma}\chi_{\{u>0\}}\varphi\,dx\quad\hbox{ for all }\; \varphi\in C^1_c(\Omega).
$$
{Suppose that $\gamma>1$. From \eqref{Ek}, it follows that
\begin{equation}
\label{Enb}
\int_{\Omega}\varphi|D (h_n(u))^\gamma|\leq \lambda\int_{\Omega}\frac{s(x)(h_n(u))^\gamma}{|x|}\varphi\,dx+\int_{\Omega}\frac{f}{u^\gamma}(h_n(u))^\gamma \varphi\,dx-\int_{\Omega}(h_n(u))^\gamma{\bf z}\cdot\nabla \varphi\,dx,
\end{equation}
Letting $n\to \infty$, one has
$$
\int_{\Omega}\varphi|D\chi_{\{u>0\}}|\leq \lambda\int_{\Omega}\frac{1}{|x|}\chi_{\{u>0\}}\varphi\,dx+\int_{\Omega}\frac{f}{u^\gamma}\chi_{\{u>0\}}\varphi\,dx-\int_{\Omega}\chi_{\{u>0\}}{\bf z}\cdot\nabla\varphi\,dx.
$$
Hence, reasoning as the case $0<\gamma\leq1$, one can prove that \eqref{chiu} holds as well.
\fim

\begin{remark}\label{remKey} By \eqref{KeyInq} and {Lemma \ref{chiSol}}, we obtain
$$
\int_{\Omega}\varphi|D\chi_{\{u>0\}}|\leq -\int_{\Omega}\varphi\chi^*_{\{u>0\}}\hbox{div}\,{\bf z}-\int_{\Omega}\chi_{\{u>0\}}{\bf z}\cdot\nabla \varphi\,dx=\int_{\Omega}\varphi({\bf z}, D\chi_{\{u>0\}})\quad\hbox{ for all }\;0\leq \varphi\in C^\infty_c(\Omega).
$$
and, by Proposition \ref{Dist}, 
$$
({\bf z}, D\chi_{\{u>0\}})\leq |D\chi_{\{u>0\}}|.
$$
Thus from both inequalities above infer that
$$
({\bf z}, D\chi_{\{u>0\}})=|D\chi_{\{u>0\}}|\quad\hbox{ as Radon measure on }\;\Omega.
$$
\end{remark}

{Lemmas \ref{ConvZ}--\ref{chiSol} show that the pair $(u,{\bf z})$ constructed above satisfies conditions $(a)$--$(e)$ of Definition \ref{DS}. If $0<\gamma\leq1$, estimate \eqref{u2} also gives $u\in BV(\Omega)$, so that the existence part of Theorem \ref{mainTh} is proved in this case. If $\gamma>1$ we only know, so far, that $u\in BV_{\rm loc}(\Omega)$ and $u^\gamma\in BV(\Omega)$: the finiteness of the energy of $u$ itself is the content of the next section. {The proof of Theorem \ref{mainTh} is then completed at the end of Section \ref{Br}, once the $L^\infty$ bound $(ii)$ is available.}}

\section{{Positive data and finite energy solutions}}
\label{PosDat}

\subsection{{A positive datum implies $u>0$}}
{As we already mentioned, positive solutions to \eqref{P} are found whenever the datum $f\in L^{N,\infty}(\Omega)$ is positive.} In order to show this, we need the following result.

\begin{lemma}
\label{LKey}
Let $f\in  L^{N,\infty}(\Omega)$ be a positive function. Let $u\in BV(\Omega)$ be a solution to \eqref{P} in sense of Definition \ref{DS}. Then  
$$
u(x)>0\quad\hbox{ a.e. }\; x\in \Omega.
$$
\end{lemma}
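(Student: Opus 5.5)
The plan is to read the statement directly off condition $(a)$ of Definition \ref{DS}, as anticipated in Remark \ref{Rnontriv}. The whole argument is a contradiction: if $u$ vanished on a set of positive measure, then $f/u^\gamma$ would be identically $+\infty$ there.

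First, fix an exhaustion of $\Omega$ by compactly contained open sets. For instance, take $\Omega_\epsilon=\{x\in\Omega\colon \hbox{dist}\,(x,\partial\Omega)>\epsilon\}$ with $\epsilon=1/j$. By $(a)$, for every $j$ we have
$$
\int_{\Omega_{1/j}}\frac{f}{u^\gamma}\,dx<\infty .
$$
Since $u\geq0$, the set $\{u=0\}$ coincides with $\{u\leq 0\}$. On $\{u=0\}\cap\{f>0\}$ the integrand $f u^{-\gamma}$ takes the value $+\infty$, with the natural convention $a/0=+\infty$ for $a>0$. This convention is the one implicit in the definition, because $f/u^\gamma$ is obtained as the a.e. limit of $f_n/(u_n+1/n)^\gamma$, or equivalently via Fatou's lemma in \eqref{SupS}.

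Second, suppose by contradiction that $|\{u=0\}|>0$. Then there is some $j$ with $|\{u=0\}\cap\Omega_{1/j}|>0$. Since $f>0$ a.e., the set $\{u=0\}\cap\{f>0\}\cap\Omega_{1/j}$ also has positive measure. On that set the integral of $f u^{-\gamma}$ is $+\infty$, which contradicts the finiteness coming from $(a)$. Hence $|\{u=0\}|=0$, that is, $u>0$ a.e. in $\Omega$.

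The only delicate point is to make sure the convention $f/0=+\infty$ is really the one encoded in $(a)$, so that a solution cannot escape the argument by reading $f/u^\gamma$ as $0$ on $\{u=0\}$. This reading is consistent with Remark \ref{Rnontriv}. If a more robust argument is wanted, one can instead use identity $(c)$: testing with $0\leq\varphi\in C^1_c(\Omega)$ and using $|{\bf z}|\leq1$ together with $\chi_{\{u>0\}}\in BV_{loc}(\Omega)$ shows that $f u^{-\gamma}$ is a locally finite measure. That in turn forces $f u^{-\gamma}<\infty$ a.e., which gives the same conclusion.
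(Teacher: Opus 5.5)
Your proof is correct and follows the paper's argument. Condition $(a)$, read with the convention $f u^{-\gamma}=+\infty$ on $\{u=0\}\cap\{f>0\}$ (as in Remark \ref{Rnontriv}), forces $f u^{-\gamma}<\infty$ a.e. on each compactly contained subset, and $f>0$ a.e. then gives $|\{u=0\}|=0$.

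Two side remarks are slightly off. First, the convention must be taken as part of Definition \ref{DS} itself: the lemma concerns arbitrary solutions, so it cannot be justified through the approximation $f_n/(u_n+1/n)^\gamma$. Second, the fallback via $(c)$ is not more robust, since that identity already presupposes $f u^{-\gamma}\in L^1_{\rm loc}(\Omega)$ and would give no contradiction under the alternative reading $f/0=0$.
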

\dem
Note that, from Definition $\ref{DS}(a)$, one has 
\begin{equation}
\label{FT}
\int_{\Omega}\frac{f}{u^\gamma}\varphi\,dx<\infty\quad\hbox{ for all }\; 0\leq \varphi\in C^\infty_c(\Omega).
\end{equation}
Hence, one can infer 
$$
|\{x\in \Omega\colon \varphi(x)=0=u(x)\}|=0.
$$
On the other hand, by Chebychev inequality, it follows that
$$
\left|\left\{x\in \Omega\colon \frac{f(x)}{u(x)^\gamma}\varphi(x)=+\infty\right\}\right|=0.
$$
But this implies that
$$
|\{x\in \Omega\colon \varphi(x)>0,\; u(x)=0\}|=0,
$$
since $f>0$ a.e. in $\Omega$.

Consequently
$$
u(x)>0\quad\hbox{ a.e. }\; x\in \Omega.
$$
\fim

\begin{theorem}
\label{secTh}
Let $f$ be a positive function in {$L^{N,\infty}(\Omega)$} {and let $u$ be a solution to \eqref{P} in the sense of Definition \ref{DS}}. Then the following identity holds
\begin{itemize}
\item [$(c)'$] $-\hbox{div}\,{\bf z}=\frac{\lambda}{|x|}+\frac{f}{u^\gamma}\quad\hbox{ in }\;\mathcal{D}'(\Omega)$.
\end{itemize}
instead of $(c)$ of Definition \ref{DS}. {Moreover, we have that ${\rm div}\,{\bf z}\in L^1(\Omega).$}
\end{theorem}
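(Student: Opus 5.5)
By Lemma \ref{LKey}, since $f>0$, every solution satisfies $u>0$ a.e. in $\Omega$. Hence $\chi_{\{u>0\}}=1$ a.e. in $\Omega$. The constant function $1$ coincides with its precise representative at every point, so $\chi^*_{\{u>0\}}\equiv 1$ everywhere, in particular $\mathrm{div}\,{\bf z}$-a.e. Condition $(c)$ of Definition \ref{DS} then reads
$$
-\hbox{div}\,{\bf z}=\frac{\lambda}{|x|}+\frac{f}{u^\gamma}\quad\hbox{ in }\;\mathcal{D}'(\Omega),
$$
which is exactly $(c)'$. One point needs care: $\chi^*$ must be evaluated against the measure $\mathrm{div}\,{\bf z}$, which may charge sets of Lebesgue measure zero. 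The representative is determined by Lebesgue averages, though, and $\chi_{\{u>0\}}=1$ a.e. gives approximate limit $1$ at every point. So $\tilde\chi\equiv 1$ on all of $\Omega$ and no exceptional set appears.

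Next I show that $\mathrm{div}\,{\bf z}$ is an $L^1(\Omega)$ function. By $(c)'$, $\mathrm{div}\,{\bf z}$ equals the locally integrable function $-\lambda/|x|-f u^{-\gamma}$. Here $1/|x|\in L^{N,\infty}(\Omega)\subset L^1(\Omega)$, and $f u^{-\gamma}\in L^1_{loc}(\Omega)$ by $(a)$. It remains to show $f u^{-\gamma}\in L^1(\Omega)$ globally, that is, integrability up to $\partial\Omega$. This is the main obstacle, since $(a)$ is only local and ${\bf z}$ is only in $X_{\mathcal{M}_{loc}}(\Omega)$.

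For this step I would test $(c)'$ with cut-offs $0\le\varphi_j\in C^1_c(\Omega)$ with $\varphi_j\uparrow 1$. This gives
$$
\int_\Omega \frac{f}{u^\gamma}\varphi_j\,dx=\int_\Omega {\bf z}\cdot\nabla\varphi_j\,dx-\lambda\int_\Omega\frac{\varphi_j}{|x|}\,dx .
$$
The cut-offs can be chosen as $\varphi_j(x)=\min\{1,\, j\,\mathrm{dist}(x,\partial\Omega)\}$, suitably mollified. Since $\|{\bf z}\|_\infty\le 1$, the first term on the right is bounded by $\int_\Omega|\nabla\varphi_j|\,dx\approx j\,|\{\mathrm{dist}<1/j\}|\to \mathrm{Per}(\Omega)$, which is bounded because $\partial\Omega$ is Lipschitz. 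The second term is bounded by $|\lambda|\,\|1/|x|\|_{L^1(\Omega)}$. By monotone convergence, $\int_\Omega f u^{-\gamma}\,dx\le C\bigl(\mathcal{H}^{N-1}(\partial\Omega)+|\lambda|\,\|1/|x|\|_{L^1(\Omega)}\bigr)<\infty$. This gives $f u^{-\gamma}\in L^1(\Omega)$, hence $\mathrm{div}\,{\bf z}\in L^1(\Omega)$ and ${\bf z}\in X_{\mathcal{M}}(\Omega)$. The only delicate point is the uniform bound on $\int_\Omega|\nabla\varphi_j|$, which I would justify through the Lipschitz character of $\partial\Omega$, for instance via the coarea formula applied to the distance function near the boundary.
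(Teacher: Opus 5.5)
Your proof is correct, and for $(c)'$ it follows a different and more direct route than the paper. The paper does not read $(c)'$ off condition $(c)$. Instead it combines \eqref{KeyInq} with Lemma \ref{LKey} to get $\int_\Omega{\bf z}\cdot\nabla\varphi\,dx\le\lambda\int_\Omega\varphi/|x|\,dx+\int_\Omega fu^{-\gamma}\varphi\,dx$, and invokes \eqref{SupS} for the reverse inequality. Both inequalities were obtained for the limit of the $p$-approximations, so that argument really only covers the constructed solution. Your observation that $\chi_{\{u>0\}}=1$ a.e.\ forces $\chi^*_{\{u>0\}}\equiv1$ at every point of $\Omega$ uses only Definition \ref{DS}. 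No $|{\rm div}\,{\bf z}|$-exceptional set arises, so you prove the statement as written, for an arbitrary solution.

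For the global integrability of $fu^{-\gamma}$ the two proofs rest on the same idea: test $(c)'$ with compactly supported functions tending to $1$ whose gradients are uniformly bounded in $L^1$, and use $\|{\bf z}\|_{L^\infty}\le1$. The paper builds the boundary layer from Giusti's extension theorem ($w_n=1$ on $\partial\Omega$, $\|w_n\|_{L^1}\to0$, $\int_\Omega|\nabla w_n|\le A\mathcal{H}^{N-1}(\partial\Omega)$) and approximates $1-w_n\in W^{1,1}_0(\Omega)$. It then handles the Hardy term with the Hardy inequality, obtaining the bound $\bigl(1+\frac{\lambda^-}{N-1}\bigr)A\mathcal{H}^{N-1}(\partial\Omega)$. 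You build the layer explicitly from the distance function and bound the Hardy term by $|\lambda|\,\|1/|x|\|_{L^1(\Omega)}$; that term can simply be dropped when $\lambda\ge0$. Your constant is cruder but still independent of $f$, which is what is needed later (uniformity in $n$ for the data $f+1/n$ in Section \ref{PosDat}).

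One small repair is needed. The function $\min\{1,j\,{\rm dist}(x,\partial\Omega)\}$ is positive everywhere in $\Omega$, so it is not compactly supported, and mollifying does not change that. Take instead $\varphi_j=\min\{1,(j\,{\rm dist}(x,\partial\Omega)-1)^+\}$. It is Lipschitz, supported in $\{{\rm dist}\ge1/j\}$, nondecreasing in $j$ with $\varphi_j\uparrow1$, and satisfies
\[
\int_\Omega|\nabla\varphi_j|\,dx=j\,|\{1/j<{\rm dist}<2/j\}|\le C .
\]
This follows from the elementary local-graph estimate $|\{x\in\Omega:{\rm dist}(x,\partial\Omega)<t\}|\le Ct$ for Lipschitz domains. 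Since ${\rm div}\,{\bf z}$ is a Radon measure and the right-hand side of $(c)'$ lies in $L^1_{\rm loc}(\Omega)$, $(c)'$ extends to such Lipschitz compactly supported test functions by mollification. Monotone convergence then applies directly.
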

\dem  Clearly, the conditions  $(a), (b)$ and $(d)$ of Definition \ref{DS} hold. {We show that $(c)'$ is also valid:}
\begin{itemize}
\item [$(c)'$] From \eqref{KeyInq}, by Lemma \ref{LKey}, one has that
$$
\int_{\Omega}{\bf z}\cdot \nabla\varphi\,dx\leq \lambda\int_{\Omega}\frac{1}{|x|}\varphi\,dx+\int_{\Omega}\frac{f}{u^\gamma}\varphi\,dx,\quad\hbox{ for all }\;0\leq \varphi\in C^\infty_c(\Omega).
$$
Thus, by \eqref{SupS},
\begin{equation}
\label{Mz}
\int_{\Omega}{\bf z}\cdot \nabla \varphi\,dx=\lambda\int_{\Omega}\frac{1}{|x|}\varphi\,dx+\int_{\Omega}\frac{f}{u^\gamma}\varphi\,dx\quad\hbox{ for all }\;\varphi \in C^\infty_c(\Omega),
\end{equation}
or equivalently
$$
-\hbox{div}\,{\bf z}=\frac{\lambda}{|x|}+\frac{f}{u^\gamma}\quad\hbox{ in }\;\mathcal{D}'(\Omega).
$$
Hence  $-\hbox{div}\,{\bf z}$ is a Radon measure in $\Omega$. 
\end{itemize}
{{\bf Claim.} $f/u^\gamma\in L^1(\Omega)$.
In order to prove this claim we follow the arguments in \cite[Theorem 5.1]{DeCiccoGiachettiSegura2019}. By \cite[Theorem 2.16]{Giusti1984}, there exists a sequence $w_n\in W^{1,1}(\Omega)$ such that
\begin{itemize}
\item [$(i)$] $ w_n=1$ on $\partial \Omega$;
\item [$(ii)$] $\|w_n\|_{L^1(\Omega)}\leq 1/n \mathcal{H}^{N-1}(\partial\Omega)$;
\item [$(iii)$] $\int_{\Omega}|\nabla w_n|\,dx\leq A \mathcal{H}^{N-1}(\partial\Omega)$, 
\end{itemize} 
with $A$ depend on $\partial\Omega$, but independent of $n$.
Observe that $1-w_n\in W^{1,1}_0(\Omega)$. Hence, there exists $(\varphi_{n,m})\subset C^\infty_c(\Omega)$ such that $\phi_{n,m}\to 1-w_n$ in $W^{1,1}_0(\Omega)$ as $m\to \infty$. Thus, by Vainberg theorem, up to subsequences, $\phi_{n,m}(x)\to 1-w_n(x)$ a.e. $x\in \Omega$ as $m\to\infty$ and so,
\begin{align}
\int_{\Omega}\frac{f}{u^\gamma}|1-w_n|\,dx&\leq \liminf_{m\to \infty}\int_{\Omega}\frac{f}{u^\gamma}|\phi_{n,m}|\,dx\\
&=\liminf_{m\to\infty}\left(\int_{\Omega}\frac{f}{u^\gamma}\phi_{n,m}^+\,dx+\int_{\Omega}\frac{f}{u^\gamma}\phi_{n,m}^-\,dx\right)\\
&=\liminf_{m\to\infty}\lim_{\epsilon\to0}\left(\int_{\Omega}\frac{f}{u^\gamma}\rho_\epsilon\ast\phi_{n,m}^+\,dx+\int_{\Omega}\frac{f}{u^\gamma}\rho_\epsilon\ast\phi_{n,m}^-\,dx\right)\\
&=\liminf_{m\to\infty}\lim_{\epsilon\to 0}\left(\int_{\Omega_{\epsilon_0}}{\bf z}\cdot \nabla (\rho_\epsilon\ast\phi_{n,m}^+)\,dx-\lambda\int_{\Omega}\frac{\rho_\epsilon\ast\phi_{n,m}^+}{|x|}\,dx\right.\\
&\qquad\left.+\int_{\Omega_{\epsilon_0}}{\bf z}\cdot \nabla (\rho_\epsilon\ast\phi_{n,m}^-)\,dx-\lambda\int_{\Omega}\frac{\rho_\epsilon\ast\phi_{n,m}^-}{|x|}\,dx\right)\\
&=\liminf_{m\to\infty}\left(\int_{\Omega_{\epsilon_0}}{\bf z}\cdot\nabla |\phi_{n,m}|\,dx-\lambda\int_{\Omega}\frac{s(x)}{|x|}|\phi_{n,m}|\,dx\right)\\
&\leq\left(\|{\bf z}\|_{L^\infty(\Omega, \mathbb{R}^N)}+\frac{\lambda^-}{N-1}\right)\int_{\Omega} |\nabla w_n|\,dx\\
&\leq \left(1+\frac{\lambda^-}{N-1}\right)A\mathcal{H}^{N-1}(\partial\Omega),
\end{align}
where
$$
\Omega_{\epsilon_0}=\{x\in \Omega\colon {\rm dist}\,(x,\partial\Omega)>\epsilon_0\},\qquad {\rm supp}\,\phi_{n,m}\subset \Omega_{\epsilon_0} \quad\hbox{ and }\quad 0<\epsilon<\frac{\epsilon_0}{2}.
$$
Letting $n\to \infty$ above, we obtain
\begin{equation}
\label{L15}
\int_{\Omega}\frac{f}{u^\gamma}\,dx\leq\left(1+\frac{\lambda^-}{N-1}\right) A\mathcal{H}^{N-1}(\partial\Omega),
\end{equation}
that is, $f/u^\gamma\in L^1(\Omega)$ and so the Claim holds. Consequently, by \eqref{B}, we have that ${\rm div}\,{\bf z}\in L^1(\Omega)$.

{Let us emphasize that the right-hand side of \eqref{L15} does not depend on $f$: no matter how large (or how small) the datum is, the quantity $\int_{\Omega}fu^{-\gamma}$ is controlled by a constant depending only on $\Omega$, $N$ and $\lambda$. This is precisely the mechanism through which the singular term prevents the degeneracy phenomenon of the case $\gamma=0$ described in \cite{ct, mst, OrtizPetitta2024}, and it is what makes all the estimates of this section independent of the size of $f$.}
}
\fim

\begin{remark}
\label{Ra}
Let $f$ be a positive measurable function in $L^{N,\infty}(\Omega)$ and let $u$ be a solution to \eqref{P} in sense of Definition \ref{DS}. {Since
$$
-\hbox{div}\,{\bf z}\in L^1(\Omega),
$$
in \eqref{WT}, we may take $\phi=1$  to obtain
$$
\lim_{\rho\downarrow0}\rho^{-N}\int_{\Omega\cap B_{\rho}(x)}u(y)\,dy=0\quad\text{ or}\quad[{\bf z}, \nu](x)=-1\quad\mathcal{H}^{N-1}-\hbox{a.e.}\; x\in \partial\Omega.
$$
}
\end{remark}
{
\subsection{{Finite energy: the case of a positive datum}}
{Following closely} \cite{maop} we shall show that the limit of $(u_p)_{p>1}$ in \eqref{u4} has a finite energy. From now on we assume $\gamma>1$. If $f>0$ then we {readily} have the following:  
\begin{theorem}
\label{FE}
Let $\gamma>1$. Let $f$ be a positive measurable function in $L^{N,\infty}(\Omega)$. If $u$ the limit found in \eqref{u4}, then $u\in BV(\Omega)$.
\end{theorem}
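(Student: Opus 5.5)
The plan is to split $u=T_k(u)+G_k(u)$ for a fixed $k>0$ and to show that both pieces belong to $BV(\Omega)$. The truncated part $T_k(u)$ will be handled by a global version of the local inequality \eqref{IneqDu}, with the test function pushed up to $\partial\Omega$. The key point is that the singular term is globally integrable by \eqref{L15}, which is where the positivity of $f$ enters. The large values $G_k(u)$ will be controlled by $u^\gamma\in BV(\Omega)$, which we already have from \eqref{u4}.

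\emph{Step 1 (local inequality for $T_k(u)$ when $\gamma>1$).} Fix $0\le\varphi\in C^1_c(\Omega)$ and test \eqref{WSp} with $T_k(u_p)\varphi$, after mollification as in the proof of Lemma \ref{IdUZ}. Young's inequality then gives
\[
\int_\Omega\varphi|\nabla T_k(u_p)|\,dx\le \frac{\lambda}{p}\int_\Omega\frac{u_p^{p-1}T_k(u_p)\varphi}{|x|^p}\,dx+\frac1p\int_\Omega\frac{f\,T_k(u_p)\varphi}{u_p^{\gamma}}\,dx-\int_\Omega T_k(u_p)|\nabla u_p|^{p-2}\nabla u_p\cdot\nabla\varphi\,dx+\frac{p-1}{p}\int_\Omega\varphi\,dx .
\]
On $\mathrm{supp}\,\varphi$ we have $u_p\ge c$ and $u_p\to u$ in $L^1_{loc}$ by \eqref{u3}. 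The Hardy term converges by Lemma \ref{Convs}. The singular term converges by dominated convergence, since its integrand is bounded by $k f c^{-\gamma}\varphi$. The last integral converges by \eqref{Cz1}, first as $p\to1^+$ on $\Omega_\epsilon$ and then as $\epsilon\to0$. Combined with lower semicontinuity of the total variation, this yields, exactly as in \eqref{IneqDu},
\[
\int_\Omega\varphi|DT_k(u)|\le \lambda\int_\Omega\frac{s(x)T_k(u)\varphi}{|x|}\,dx+\int_\Omega\frac{f}{u^\gamma}T_k(u)\varphi\,dx-\int_\Omega T_k(u)\,{\bf z}\cdot\nabla\varphi\,dx .
\]
By density this extends to nonnegative $\varphi\in W^{1,1}_0(\Omega)\cap L^\infty(\Omega)$ with compact support, and then, through approximations $\phi_{n,m}$, to $\varphi=1-w_n$. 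The limit in $m$ on the left-hand side is taken by lower semicontinuity. The integrals on the right-hand side are all absolutely convergent, because $f/u^\gamma\in L^1(\Omega)$ by \eqref{L15}, $1/|x|\in L^1(\Omega)$, $0\le T_k(u)\le k$ and $|{\bf z}|\le1$.

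\emph{Step 2 (uniform bound and $T_k(u)\in BV(\Omega)$).} Take $\varphi=1-w_n$, with $w_n$ the functions from \cite[Theorem 2.16]{Giusti1984} used in the proof of Theorem \ref{secTh}. Then $0\le T_k(u)\le k$, the bound \eqref{L15} and $\int_\Omega|\nabla w_n|\,dx\le A\mathcal H^{N-1}(\partial\Omega)$ give
\[
\int_\Omega(1-w_n)|DT_k(u)|\le k\Big(\lambda^+\int_\Omega\frac{dx}{|x|}+\big(1+\tfrac{\lambda^-}{N-1}\big)A\mathcal H^{N-1}(\partial\Omega)+A\mathcal H^{N-1}(\partial\Omega)\Big),
\]
which is independent of $n$. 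Letting $n\to\infty$ with $w_n\to0$ in $L^1$ (and $a.e.$ along a subsequence), Fatou's lemma on $|DT_k(u)|$, applied on each $\omega\subset\subset\Omega$ and then exhausting $\Omega$, gives $T_k(u)\in BV(\Omega)$.

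\emph{Step 3 (large values).} The function $G_k(u)=\Psi(u^\gamma)$, where $\Psi(t)=(t^{1/\gamma}-k)^+$, is Lipschitz on $[0,\infty)$ with constant $\frac{1}{\gamma}k^{1-\gamma}$. The chain rule in $BV$ (\cite[Theorem 3.96]{AmbrosioFuscoPallara}) together with $u^\gamma\in BV(\Omega)$ from \eqref{u4} then gives $|DG_k(u)|(\Omega)\le \frac{1}{\gamma}k^{1-\gamma}|Du^\gamma|(\Omega)<\infty$. Since $u\in L^1(\Omega)$ (because $u^\gamma\in L^1$ and $\gamma>1$), we conclude $u=T_k(u)+G_k(u)\in BV(\Omega)$.

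The main obstacle is Step 1: legitimately passing to $\varphi=1-w_n$, which does not vanish near $\partial\Omega$. This needs the inequality to hold with the singular term taken in the limit as a genuinely $L^1$ function, and this is exactly where \eqref{L15}, and hence $f>0$ through Theorem \ref{secTh}, is indispensable. One must also make sure that the ${\bf z}$-term is handled through the $\epsilon$-localized fields ${\bf z}_\epsilon$ of Lemma \ref{ConvZ} before sending $\epsilon\to0$.
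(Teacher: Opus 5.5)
There is a gap in Step 1, at the passage $p\to1^+$ in the singular term. You use a lower bound $u_p\ge c$ on ${\rm supp}\,\varphi$, but that bound is not uniform in $p$. The constant $c_\omega$ in \eqref{WSp} comes from \eqref{UnC}, i.e. from $u_n\ge u_1$, where $u_1$ is the first approximation at the fixed exponent $p$; nothing in the construction keeps it away from $0$ as $p\to1^+$. Without a $p$-independent $c$ there is no dominating function. For $\gamma>1$ the integrand $fT_k(u_p)u_p^{-\gamma}\varphi$ equals $fu_p^{1-\gamma}\varphi$ on $\{u_p\le k\}$, which is unbounded where $u_p$ is small. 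Fatou's lemma only gives $\liminf_{p\to1^+}\int_\Omega fT_k(u_p)u_p^{-\gamma}\varphi\,dx\ge\int_\Omega fT_k(u)u^{-\gamma}\varphi\,dx$, which is the wrong direction for an upper bound on $\int_\Omega\varphi|DT_k(u)|$. Nor can you appeal to \eqref{IneqDu}: it is proved only for $0<\gamma\le1$, where $T_k(s)s^{-\gamma}\le k^{1-\gamma}$. This is exactly why, for $\gamma>1$, the paper tests with $T_k(u_p)^\gamma\varphi$, whose singular term is bounded by $f$. It obtains \eqref{Ek} only for $T_k(u)^\gamma$, and reaches $T_k(u)$ itself only through the pairing identity of Lemma \ref{ZDu}.

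The step can be repaired in either of two ways.
\begin{itemize}
\item[(a)] Since $f>0$, Theorem \ref{secTh} gives ${\rm div}\,{\bf z}\in L^1(\Omega)$ with $-{\rm div}\,{\bf z}=\lambda|x|^{-1}+fu^{-\gamma}$, and Lemma \ref{ZDu} gives $({\bf z},DT_k(u))=|DT_k(u)|$. Definition \ref{zDu} then yields your Step 1 inequality, even as an equality, for every $0\le\varphi\in C^1_c(\Omega)$.
\item[(b)] Test \eqref{WSp} with $\varphi(1-u_p/(2\delta))^+$. This gives $\int_{\{u_p<\delta\}}fu_p^{-\gamma}\varphi\,dx\le C$ uniformly in $p$, with $C$ depending on $\int_\Omega|\nabla u_p|^{p-1}|\nabla\varphi|\,dx$ (bounded by \eqref{Bd}) and, if $\lambda<0$, on the Hardy term. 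Hence $\int_{\{u_p<\delta\}}fT_k(u_p)u_p^{-\gamma}\varphi\,dx\le C\delta$, and this uniform integrability lets Vitali's theorem replace dominated convergence.
\end{itemize}

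With Step 1 repaired, your argument is a valid alternative to the paper's. The paper applies the Green formula globally to $\alpha_n(T_k(u)^\gamma)$, to get around the non-Lipschitz root at $0$. It then uses the sign of $[{\bf z},\nu]$ on $\partial\Omega$ and lets $k\to\infty$ using $u\in L^1(\Omega,{\rm div}\,{\bf z})$. You need only interior test functions and \eqref{L15}, and your Step 3 (the Lipschitz chain rule applied to $u^\gamma$) disposes of $G_k(u)$ with no limit in $k$.

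In Step 2, however, drop the detour through $1-w_n$. Fatou's lemma with respect to $|DT_k(u)|$ needs convergence $|DT_k(u)|$-a.e., and Lebesgue-a.e. convergence does not give that, since $|DT_k(u)|$ may charge Lebesgue-null sets. It suffices to take cut-offs $\varphi_j\in C^1_c(\Omega)$ with $0\le\varphi_j\uparrow1$ in $\Omega$ and $\sup_j\int_\Omega|\nabla\varphi_j|\,dx<\infty$, which exist because $\Omega$ is Lipschitz, and to conclude by monotone convergence.
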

\dem Define $\alpha_n\colon \mathbb{R}\to \mathbb{R}$ by
\begin{equation}
\alpha_n(s)=\left\{
\begin{array}{lcr}
(s+1/n)^{1/\gamma}-1/n^{1/\gamma}&\hbox{if}&s\geq 0,\\
1/n^{1/\gamma}-(1/n-s)^{1/\gamma}&\hbox{if}&s<0.
\end{array}
\right.
\end{equation}
Note that $\alpha_n$ is Lipschitz increasing function that belongs to $C^1(\mathbb{R})$. Hence, $\alpha_n(T_k(u)^\gamma)\in BV(\Omega)$ and, by \cite[Proposition 2.8]{Anzellotti1983},
\begin{equation}
\label{fTk}
({\bf z}, D\alpha_n(T_k(u)^\gamma))=|D\alpha_n(T_k(u)^\gamma)| \quad\hbox{ as finite Radon measures on}\;\Omega.
\end{equation}
Use Green Formula and \eqref{fTk} to obtain
$$
\int_{\Omega}|D\alpha_n(T_k(u)^\gamma)|=\int_{\partial\Omega}\alpha_n(T_k(u)^\gamma)[{\bf z}, \nu]\, d\mathcal{H}^{N-1}-\int_{\Omega}\alpha_n(T_k(u)^\gamma){\rm div}\,{\bf z}\,dx\leq -\int_{\Omega}\alpha_n(T_k(u)^\gamma){\rm div}\,{\bf z}\,dx,
$$
since $\int_{\partial\Omega}\alpha_n(T_k(u)^\gamma)[{\bf z}, \nu]\, d\mathcal{H}^{N-1}\leq 0$. Letting $n\to \infty$, we obtain that 
$$
\int_{\Omega}|DT_k(u)|\leq\liminf_{n\to\infty}\int_{\Omega}|D\alpha_n(T_k(u)^\gamma)|\leq \liminf_{n\to \infty}-\int_{\Omega}\alpha_n(T_k(u)^\gamma){\rm div}\,{\bf z}\,dx=-\int_{\Omega}T_k(u){\rm div}\,{\bf z}\,dx.
$$
 Since $u\in L^1(\Omega, {\rm div}\,{\bf z}\,dx)$ (Lemma \ref{IdUZ}), one has
$$
\int_{\Omega}|Du|\leq \liminf_{k\to \infty}\int_{\Omega}|DT_k(u)|\leq \liminf_{k\to\infty}\left(\int_{\Omega}T_k(u) |{\rm div}\,{\bf z}|\,dx\right)=\int_{\Omega}u|{\rm div}\,{\bf z}|\,dx.
$$
Hence $u\in BV(\Omega)$ as desired.
}
\fim

\subsection{{Finite energy: the case of a nonnegative datum}}
{If $f$ is merely nonnegative}, for each $n\in\mathbb{N}$, we consider the following auxiliary problem associated to \eqref{P},
\begin{equation}
\label{AP}
\left\{
\begin{array}{rclr}
-\Delta_1u&=&\lambda\frac{u}{|u|}+\frac{f_n}{u^\gamma}&\hbox{ in }\;\Omega,\\
u&=&0&\hbox{ on }\;\partial\Omega,
\end{array}
\right.
\end{equation}
where $f_n=f+1/n$, $f$ is a nonnegative function in $L^{N,\infty}(\Omega)$. Note that, by Theorem \ref{FE}, there exists a solution $u_n\in BV(\Omega)$ to \eqref{AP}, that is,  there exist ${\bf z}_n\in X_{\mathcal{M}}(\Omega)$ with ${\rm div}\,{\bf z}_n\in L^1(\Omega)$ and $\|{\bf z}_n\|_{L^{\infty}(\Omega, \mathbb{R}^N)}\leq 1$ such that
\begin{itemize}
\item [$(1)$] $f_n/u_n^\gamma\in L^1(\Omega)$;
\item [$(2)$] $-{\rm div}\,{\bf z}_n=\frac{\lambda}{|x|}+\frac{f_n}{u_n^\gamma}$ in $\mathcal{D}'(\Omega)$;
\item [$(3)$] $({\bf z}_n, Du_n)=|Du_n|$ as finite Radon measures on $\Omega$;
\item [$(4)$] one of the following holds:
$$
\lim_{\rho\downarrow0}\rho^{-N}\int_{\Omega\cap B_{\rho}(x)}u_n(y)\,dy=0\quad\text{ or }\quad[{\bf z}_n, \nu]=-1\quad\mathcal{H}^{N-1}-\text{ a.e.}\; x\in\partial\Omega.
$$
\end{itemize}

We shall show that $(u_n)_{n\geq1}$ is a bounded sequence in $BV(\Omega)$ and so, there exists $u\in BV(\Omega)$ such that, up to subsequences, 
\begin{eqnarray}
\label{CUn5}
\nonumber Du_n\rightharpoonup Du&*-\hbox{weak in }\; BV(\Omega),\\
u_n\to u&\hbox{ in }\; L^q(\Omega), \; q\in [1,N/(N-1)),\\
\nonumber u_n\rightharpoonup u&\hbox{in }\; L^{1^*}(\Omega),
\end{eqnarray}
as $n\to\infty$. We shall also show that $u$ is a solution to \eqref{P} in sense of Definition \ref{DS}. 

\begin{lemma} Let $(u_n)_{n\geq1}$ be the sequence of solutions obtained of  \eqref{AP}. Then such sequence is bounded in $BV(\Omega)$ and so, holds \eqref{CUn5} and 
there exists $u^\gamma\in BV(\Omega)$ such that, up to subsequences,
\begin{eqnarray*}
Du_n^\gamma\rightharpoonup Du&*-\text{ weak in }\; BV(\Omega),\\
u_n^\gamma\to u^\gamma&\text{ in }\; L^q(\Omega),\; q\in [1, 1^*),\\
u_n^\gamma\rightharpoonup u^\gamma&\text{ in }\; L^{1^*}(\Omega),
\end{eqnarray*}
as $n\to \infty$.
\end{lemma}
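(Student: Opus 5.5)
The bound in $BV(\Omega)$ for $(u_n)$ will come from repeating, for each $n$, the argument of Theorem \ref{FE} and of the Claim in Theorem \ref{secTh}, and checking that every constant is independent of $n$. Since $f_n=f+1/n>0$, Theorem \ref{secTh} applies to $u_n$. Estimate \eqref{L15} then gives
$$
\int_{\Omega}\frac{f_n}{u_n^\gamma}\,dx\leq\Big(1+\frac{\lambda^-}{N-1}\Big)A\,\mathcal{H}^{N-1}(\partial\Omega)=:C_1,
$$
and $C_1$ does not depend on $n$. Consequently ${\rm div}\,{\bf z}_n=-\lambda|x|^{-1}-f_nu_n^{-\gamma}$ is bounded in $L^1_{\rm loc}(\Omega)$, and in fact in $L^1(\Omega)$, because $|x|^{-1}\in L^1(\Omega)$.

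For the gradient I will follow the proof of Theorem \ref{FE}. Combining the Green formula with $({\bf z}_n,Du_n)=|Du_n|$ and with the boundary inequality $u_n[{\bf z}_n,\nu]\le 0$, which follows from $(4)$ together with $|[{\bf z}_n,\nu]|\le1$ (I plan to use $\int_{\partial\Omega}(u_n+u_n[{\bf z}_n,\nu])=0$ as in \eqref{Tphi} with $\phi=1$), I obtain
$$
\int_\Omega|Du_n|+\int_{\partial\Omega}u_n\,d\mathcal{H}^{N-1}\le -\int_\Omega u_n\,{\rm div}\,{\bf z}_n=\lambda\int_\Omega\frac{u_n}{|x|}\,dx+\int_\Omega f_nu_n^{1-\gamma}\,dx .
$$
This computation is first performed on $T_k(u_n)$ and then passed to the limit in $k$.

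For the Hardy term I use the $BV$ Hardy inequality $\int_\Omega u_n/|x|\le \frac1{N-1}\big(\int_\Omega|Du_n|+\int_{\partial\Omega}u_n\big)$, which follows from Proposition \ref{LSI}, the Hölder inequality in Lorentz spaces, and $\||x|^{-1}\|_{L^{N,\infty}}=|B_1|^{1/N}$. Since $\lambda^+<N-1$, this term can be absorbed into the left-hand side with the factor $1-\lambda^+/(N-1)>0$.

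The main obstacle is the singular term $\int_\Omega f_nu_n^{1-\gamma}$ when $\gamma>1$. I split $\Omega$ into $\{u_n\ge1\}$ and $\{u_n<1\}$. On $\{u_n\ge1\}$ we have $f_nu_n^{1-\gamma}\le f_n$, which is bounded in $L^1(\Omega)$. On $\{u_n<1\}$ we have $u_n^{1-\gamma}\le u_n^{-\gamma}$, so that piece is bounded by $C_1$. Hence
$$
\Big(1-\frac{\lambda^+}{N-1}\Big)\|u_n\|\le \|f\|_{L^1(\Omega)}+|\Omega|+C_1,
$$
and by the Remark following \eqref{TrIn}, $(u_n)$ is bounded in $BV(\Omega)$. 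The convergences \eqref{CUn5} then follow from the compact embedding theorem in $BV$.

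For the powers $u_n^\gamma$ the same scheme works with $T_k(u_n)^\gamma$ in place of $u_n$, using the identity for $u^\gamma$ from Lemma \ref{IdUZ}. One gets
$$
\Big(1-\frac{\lambda^+}{N-1}\Big)\|T_k(u_n)^\gamma\|\le\int_\Omega f_n\,dx\le \|f\|_{L^1(\Omega)}+|\Omega|,
$$
which again is uniform in $n$ and $k$. The stated convergences of $u_n^\gamma$ follow by compactness. The limit is identified with $u^\gamma$ through the a.e. convergence $u_n\to u$.
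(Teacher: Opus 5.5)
Your proposal is correct and follows the paper's proof. Both arguments apply the Green formula with $({\bf z}_n,Du_n)=|Du_n|$ and the boundary condition $(4)$, absorb the Hardy term through the $BV$ Hardy inequality with constant $\frac{1}{N-1}$, and use the $n$-independent bound \eqref{L15}; the $u_n^\gamma$ case then repeats this with the singular term reducing to $\int_\Omega f_n$. The differences are minor: you bound $\int_\Omega f_nu_n^{1-\gamma}$ by splitting at the level $u_n=1$ rather than by the paper's H\"older estimate $\|f_n\|_{L^1(\Omega)}^{1/\gamma}\big(\int_\Omega f_nu_n^{-\gamma}\big)^{1-1/\gamma}$, and you pass through truncations $T_k$ before applying the Green formula, which is slightly more careful than the paper about integrability against ${\rm div}\,{\bf z}_n$.
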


\dem By Green formula, we have that
$$
\int_{\Omega}({\bf z}_n, Du_n)+\int_{\Omega}u_n {\rm div}\,{\bf z}_n\,dx=\int_{\partial\Omega}u_n[{\bf z}_n, \nu]\,d\mathcal{H}^{N-1}.
$$
From $(3)$ and $(4)$, it follows that
$$
\int_{\Omega}|Du_n|+\int_{\partial\Omega}u_n\,d\mathcal{H}^{N-1}=-\int_{\Omega}u_n{\rm div}\,{\bf z}_n\,dx.
$$
Since
\begin{align}
-\int_{\Omega}u_n{\rm div}\,{\bf z}\,dx&=\lambda\int_{\Omega}\frac{u_n}{|x|}\,dx+\int_{\Omega}\frac{f_n}{u_n^\gamma}u_n\,dx\\
&\leq \frac{\lambda^+}{N-1}\left(\int_{\Omega}|Du_n|+\int_{\partial\Omega}|u_n|\,d\mathcal{H}^{N-1}\right)+\left(\int_{\Omega} f_n\,dx\right)^{1/\gamma}\left(\int_{\Omega}\frac{{f_n}}{u_n^\gamma}\,dx\right)^{1-1/\gamma}\\
&\leq \frac{\lambda^+}{N-1}\left(\int_{\Omega}|Du_n|+\int_{\partial\Omega}|u_n|\,d\mathcal{H}^{N-1}\right)+\|f_n\|_{L^1(\Omega)}^{1/\gamma}\left(\left(1+\frac{\lambda^-}{N-1}\right)A\mathcal{H}^{N-1}(\partial\Omega)\right)^{1-1/\gamma},
\end{align}
where was used \eqref{L15}, one has
$$
\left(1-\frac{\lambda^+}{N-1}\right)\left(\int_{\Omega}|Du_n|+\int_{\partial\Omega}|u_n|\,d\mathcal{H}^{N-1}\right)\leq \|f_n\|_{L^1(\Omega)}^{1/\gamma}\left(\left(1+\frac{\lambda^-}{N-1}\right)A\mathcal{H}^{N-1}(\partial\Omega)\right)^{1-1/\gamma}.
$$
Hence $(u_n)_{n\geq 1}$ is bounded in $BV(\Omega)$ and so, there exists $u\in BV(\Omega)$ such that, up to subsequences,
\begin{eqnarray}
\nonumber Du_n\rightharpoonup Du&*-\hbox{weak in }\; BV(\Omega),\\
\nonumber u_n\to u&\hbox{ in }\; L^q(\Omega), \; q\in [1,1^*),\\
\nonumber u_n\rightharpoonup u&\hbox{in }\; L^{1^*}(\Omega),
\end{eqnarray}
as $n\to\infty$. Similarly, since $u_n^\gamma\in BV(\Omega)$ and it satisfies
$$
-\int_{\Omega}u_n^\gamma{\rm div}\,{\bf z}_n\,dx=\lambda\int_{\Omega}\frac{u_n^\gamma}{|x|}\,dx+\int_{\Omega}f_n\,dx,
$$
there exists $u^\gamma\in BV(\Omega)$ such that, up to subsequences,
\begin{eqnarray*}
Du_n^\gamma\rightharpoonup Du&*-\text{ weak in }\; BV(\Omega),\\
u_n^\gamma\to u^\gamma&\text{ in }\; L^q(\Omega),\; q\in [1, 1^*),\\
u_n^\gamma\rightharpoonup u^\gamma&\text{ in }\; L^{1^*}(\Omega),
\end{eqnarray*}
as $n\to \infty$.
\fim

{\bf Proof of the finite energy property in Theorem \ref{mainTh}.}
We shall show that $u$ as found above satisfies the condition $(a)-(e)$ of Definition \ref{DS}.

${\bf (a)}$ Since $\|{\bf z}_n\|_{L^\infty(\Omega, \mathbb{R}^N)}\leq 1$, there exists ${\bf z}\in L^\infty(\Omega,\mathbb{R}^N)$ such that, up to subsequences,
$$
{\bf z}_n\rightharpoonup {\bf z}\quad *-\hbox{ weak in }\; L^\infty(\Omega, \mathbb{R}^N),
$$
as $n\to \infty$, and $\|{\bf z}\|_{L^\infty(\Omega,\mathbb{R}^N)}\leq1$. But this and Fatou's lemma  in $(2)$ imply that
\begin{equation}
\label{R}
\int_{\Omega}{\bf z}\cdot \nabla \varphi\,dx\geq\lambda\int_{\Omega}\frac{\varphi}{|x|}\,dx+\int_{\Omega}\frac{f}{u^\gamma}\varphi\,dx\quad\text{ for all }\;0\leq \varphi\in C_c^1(\Omega).
\end{equation}
Hence $-{\rm div}\,{\bf z}$ is a Radon measure on $\Omega$ and $f/u^\gamma\in L^1_{\rm loc}(\Omega)$.

${\bf ( b)}$ Let $\delta>0$. Define $S_\delta\colon\mathbb{R}\to \mathbb{R}$ as
\begin{equation}
S_\delta(t)=\left\{
\begin{array}{lcr}
0&\text{ if }&t\leq \delta,\\
\frac{t-\delta}{\delta}&\text{ if }&\delta<t<2\delta,\\
1&\text{ if }&t\geq 2\delta.
\end{array}
\right.
\end{equation}
Let $0\leq \varphi\in C_c^1(\Omega)$. Use the fact that $({\bf z}_n, DS_\delta(u_n))=|DS_\delta(u_n)|$ and $(2)$ in Definition \ref{zDu} to obtain
$$
\int_{\Omega}\varphi|DS_\delta(u_n)|+\int_{\Omega}S_\delta(u_n){\bf z}_n\cdot \nabla\varphi\,dx=\lambda\int_{\Omega}\frac{S_\delta(u_n)\varphi}{|x|}\,dx+\int_{\Omega}\frac{f_n}{u_n^\gamma}S_\delta(u_n)\varphi\,dx.
$$
Letting $n\to\infty$ above, we obtain
$$
\int_{\Omega}\varphi|DS_\delta(u)|+\int_{\Omega}S_\delta(u){\bf z}\cdot\nabla\varphi\,dx\leq\lambda\int_{\Omega}\frac{S_\delta(u)\varphi}{|x|}\,dx+\int_{\Omega}\frac{f}{u^\gamma}S_\delta(u)\varphi\,dx,
$$
where was used the fact that $S_\delta(u_n)\to S_\delta(u)$ in $L^1(\Omega)$ as $n\to \infty$. Hence, passing to limit as $\delta\to0$, one has 
\begin{equation}
\label{E0}
\int_{\Omega}\varphi|D\chi_{\{u>0\}}|+\int_{\Omega}\chi_{\{u>0\}}{\bf z}\cdot\nabla \varphi\,dx\leq\lambda\int_{\Omega}\frac{\chi_{\{u>0\}}\varphi}{|x|}\,dx+\int_{\Omega}\frac{f}{u^\gamma}\chi_{\{u>0\}}\varphi\,dx.
\end{equation}
But this, in turn, implies that $\chi_{\{u>0\}}\in BV_{\rm loc}(\Omega)$.

${\bf(c)}$ Note that, from \eqref{E0} one can infer that
\begin{equation}
\label{d1}
-\int_{\Omega}\chi_{\{u>0\}}^*\varphi{\rm div}\,{\bf z}\leq \lambda\int_{\Omega}\frac{\chi_{\{u>0\}}\varphi}{|x|}\,dx+\int_{\Omega}\frac{f}{u^\gamma}\chi_{\{u>0\}}\varphi\,dx.
\end{equation}
In order to estimate the contrary inequality use $\rho_{\epsilon}\ast\chi_{\{u>0\}}\varphi$ as test function in \eqref{R} to obtain, after of pass to the limit as $\epsilon\to0$,
\begin{equation}
\label{d2}
-\int_{\Omega}\chi_{\{u>0\}}^*\varphi{\rm div}\,{\bf z}\geq \lambda\int_{\Omega}\frac{\chi_{\{u>0\}}\varphi}{|x|}\,dx+\int_{\Omega}\frac{f}{u^\gamma}\chi_{\{u>0\}}\varphi\,dx.
\end{equation}
Thus, from \eqref{d1} and \eqref{d2}, $(c)$ holds.

${\bf(d)}$ Note that, since$({\bf z}_n, Du_n)=|Du_n|$, one has, by \cite[Proposition 2.8]{Anzellotti1983},
\begin{equation}
\label{zDgn}
({\bf z}_n, Dg(T_k(u_n)))=|Dg(T_k(u_n))|\quad\text{ as Radon measures on }\;\Omega,
\end{equation}
where $g$ is defined in \eqref{g}. In order to prove $(d)\; ({\bf z}, Du)=|Du|$ we shall show that
$$
({\bf z}, Dg(T_k(u)))=|Dg(T_k(u))|\quad\text{ as Radon measures on }\; \Omega.
$$
Indeed, by Definition \ref{zDu}, it follows that
\begin{align}
\int_{\Omega}\varphi|Dg(T_k(u_n))|&=-\int_{\Omega}g(T_k(u_n))^*\varphi {\rm div}\,{\bf z}_n\,dx-\int_{\Omega}g(T_k(u_n)){\bf z}_n\cdot\nabla\varphi\,dx\\
&=\lambda\int_{\Omega}\frac{g(T_k(u_n))\varphi}{|x|}\,dx+\int_{\Omega}\frac{f_n}{u_n^\gamma}g(T_k(u))\varphi\,dx-\int_{\Omega}g(T_k(u_n)){\bf z}_n\cdot\nabla \varphi\,dx.
\end{align}
Letting $n\to \infty$, we obtain
\begin{equation}
\label{Dg5}
\int_{\Omega}\varphi|Dg(T_k(u))|\leq\lambda\int_{\Omega}\frac{g(T_k(u))\varphi}{|x|}\,dx+\int_{\Omega}\frac{f}{u^\gamma}g(T_k(u))\varphi\,dx-\int_{\Omega}g(T_k(u)){\bf z}\cdot\nabla\varphi\,dx.
\end{equation}
and, as $({\bf z}, g(T_k(u)))\leq |Dg(T_k(u))|$, using Definition \ref{zDu}, we infer that
$$
-\int_{\Omega}g(T_k(u))^*\varphi {\rm div}\,{\bf z}\leq \lambda\int_{\Omega}\frac{g(T_k(u))\varphi}{|x|}\,dx+\int_{\Omega}\frac{f}{u^\gamma}g(T_k(u))\varphi\,dx.
$$
Use $\rho_\epsilon\ast g(T_k(u))\varphi$ as test function in \eqref{R} to obtain the contrary inequality. So, using \cite[Propositions 3.64 c), 3.69 c)] {AmbrosioFuscoPallara}
\begin{equation}
\label{gDiv}
-\int_{\Omega}g(T_k(u^*))\varphi {\rm div}\,{\bf z}= \lambda\int_{\Omega}\frac{g(T_k(u))\varphi}{|x|}\,dx+\int_{\Omega}\frac{f}{u^\gamma}g(T_k(u))\varphi\,dx\quad\text{ for all }\;\varphi\in C_c^1(\Omega).
\end{equation}
But this, in \eqref{Dg5} implies that
$$
\int_{\Omega}\varphi|Dg(T_k(u))|\leq \int_{\Omega}\varphi({\bf z}, Dg(T_k(u)))\quad\text{ for all}\;0\leq\varphi\in C_c^1(\Omega)
$$
and so,
\begin{equation}
\label{zDg}
({\bf z}, Dg(T_k(u))=|Dg(T_k(u))|\quad\text{ as Radon measures on }\;\Omega.
\end{equation}
On the other hand, from \eqref{gDiv} we can infer that $(-(T_k(u^*)^\gamma){\rm div}\,{\bf z})_{k\geq1}$ is a bounded and holds
$$
\int_{\Omega}(T_k(u^*)^\gamma)|-{\rm div}\,{\bf z}| \leq \lambda^+\int_{\Omega}\frac{u^\gamma}{|x|}\,dx+\int_{\Omega}f\,dx,
$$
where was used the fact that $u^\gamma\in BV(\Omega)$. But this last, by Fatou's lemma, implies that
$$
\int_{\Omega}(u^*)^\gamma|-{\rm div}\,{\bf z}|\leq\liminf_{k\to\infty}\int_{\Omega}(T_k(u^*)^\gamma)|-{\rm div}\,{\bf z}|\leq \lambda^+\int_{\Omega}\frac{u^\gamma}{|x|}\,dx+\int_{\Omega}f\,dx.
$$
Hence $(u^*)^\gamma\in L^1(\Omega, {\rm div}\,{\bf z})$.

Let $m_k(u):=g'(\widetilde{T_k(u)})\chi_{(\Omega\setminus S_{T_k(u)})}+\frac{|g(T_k(u^+))-g(T_k(u^-))|}{|T_k(u^+)-T_k(u^-)|}\chi_{J_{T_k(u)}\cap J_u}\neq 0$, $|DT_k(u)|$-a.e. $x\in\Omega$ {(see the discussion after \eqref{IdT})}. Use \eqref{zDg}, \cite[Proposition 2.8]{Anzellotti1983} and \cite[Proposition 3.96]{AmbrosioFuscoPallara} to obtain
\begin{align*}
\int_{\Omega}({\bf z}, DT_k(u))&=\int_{\Omega}\theta({\bf z}, DT_k(u),x)|DT_k(u)|\\
&=\int_{\Omega}\frac{\theta({\bf z}, Dg(T_k(u)), x)}{m_k(u)}|Dg(T_k(u))|\\
&=\int_{\Omega}\frac{1}{m_k(u)}({\bf z}, Dg(T_k(u)))\\
&=\int_{\Omega}\frac{1}{m_k(u)}|Dg(T_k(u))|\\
&=\int_{\Omega}|DT_k(u)|.
\end{align*}
Letting $k\to\infty$, using \cite[Proposition 2.20]{OrtizPetitta2024}, we obtain
$$
({\bf z}, Du)=|Du|\quad\text{ as Radon measures on }\;\Omega.
$$

${\bf (e)}$ Let $0\leq\phi\in C^1(\overline{\Omega})$. An application of Green formula for $\phi T_k(u_n)^\gamma\in BV(\Omega)$ and ${\bf z}_n\in X_{\mathcal{M}}(\Omega)$ gives
$$
\int_{\Omega}({\bf z}_n, D(\phi T_k(u_n)^\gamma))+\int_{\Omega}\phi T_k(u_n)^\gamma {\rm div}\,{\bf z}_n\,dx=\int_{\partial\Omega}T_k(u_n)^\gamma[\phi {\bf z}_n, \nu]\,d\mathcal{H}^{N-1},
$$
or equivalently
$$
\int_{\Omega}\phi |DT_k(u_n)^\gamma|+\int_{\partial\Omega}\phi T_k(u_n)^\gamma\,d\mathcal{H}^{N-1}=-\int_{\Omega}\phi T_k(u_n)^\gamma{\rm div}\,{\bf z}_n\,dx-\int_{\Omega}T_k(u_n)^\gamma{\bf z}_n\cdot\nabla \phi\,dx,
$$
where was used \eqref{zDgn} and \eqref{G1}. Since
$$
-\int_{\Omega}\phi T_k(u_n)^\gamma {\rm div}\,{\bf z}_n\,dx=\lambda \int_{\Omega}\frac{\phi T_k(u_n)^\gamma}{|x|}\,dx+\int_{\Omega}\frac{f_n}{u_n^\gamma}\phi T_k(u_n)^\gamma\,dx,
$$
one has that
$$
\int_{\Omega}\phi |DT_k(u_n)^\gamma|+\int_{\partial\Omega}\phi T_k(u_n)^\gamma\,d\mathcal{H}^{N-1}=\lambda \int_{\Omega}\frac{\phi T_k(u_n)^\gamma}{|x|}\,dx+\int_{\Omega}\frac{f_n}{u_n^\gamma}\phi T_k(u_n)^\gamma\,dx-\int_{\Omega}T_k(u_n)^\gamma{\bf z}_n\cdot\nabla \phi\,dx.
$$
Letting $n\to\infty$, we obtain 
\begin{align*}
\int_{\Omega}\phi|DT_k(u)^\gamma|+\int_{\partial\Omega}\phi T_k(u)^\gamma\,d\mathcal{H}^{N-1}&\leq \lambda\int_{\Omega}\frac{\phi T_k(u)^\gamma}{|x|}\,dx+\int_{\Omega}\frac{f\phi T_k(u)^\gamma}{u^\gamma}\,dx-\int_{\Omega} T_k(u)^\gamma {\bf z}\cdot\nabla\phi\,dx\\
&=-\int_{\Omega}\phi (T_k(u)^\gamma)^*{\rm div}\,{\bf z}-\int_{\Omega}T_k(u)^\gamma{\bf z}\cdot\nabla\phi\,dx\\
&=\int_{\Omega}({\bf z}, D(\phi T_k(u)^\gamma))-\int_{\partial\Omega}T_k(u)^\gamma[\phi{\bf z},\nu]\,d\mathcal{H}^{N-1}-\int_{\Omega}T_k(u)^\gamma{\bf z}\cdot\nabla\phi\,dx\\
&\leq \int_{\Omega}\phi|DT_k(u)^\gamma|-\int_{\partial\Omega} T_k(u)^\gamma[\phi{\bf z}, \nu]\,d\mathcal{H}^{N-1}.
\end{align*} 
But this, in turn, implies that
\begin{equation}
\label{pT}
\int_{\partial\Omega}\phi T_k(u)^\gamma\,d\mathcal{H}^{N-1}+\int_{\partial\Omega}T_k(u)^\gamma[\phi {\bf z}, \nu]\,d\mathcal{H}^{N-1}\leq0.
\end{equation}
Suppose that $\phi\in L^1(\Omega, {\rm div}\,{\bf z})$ and it satisfies
$
\sup_{\Omega}\phi\leq\phi\lfloor_{\partial\Omega}.
$
Then
$$
-T_k(u)^\gamma[\phi {\bf z}, \nu]\leq T_k(u)^{{\gamma}}\phi\quad\mathcal{H}^{N-1}-\text{a.e. in }\;\partial\Omega.
$$ 
and so, the contrary inequality of \eqref{pT} holds. Consequently,
$$
\int_{\partial\Omega}(\phi T_k(u)^\gamma+T_k(u)^\gamma[\phi{\bf z}, \nu])\,d\mathcal{H}^{N-1}=0.
$$
Letting $k\to\infty$, one has that
$$
\int_{\partial\Omega}(\phi u^\gamma+u^\gamma[\phi{\bf z},\nu])\,d\mathcal{H}^{N-1}=0.
$$
Therefore 
$$
u^\gamma(x)=0\quad\text{ or }\quad\phi(x)+[\phi {\bf z},\nu](x)=0\quad\mathcal{H}^{N-1}-\text{a.e.}\;x\in\partial\Omega.
$$
Moreover, if $u^\gamma=0$ on $\partial\Omega$, by H\"{o}lder inequality, we have that
$$
\rho^{-N}\int_{\Omega\cap B_{\rho}{(x)}}u(y)\,dy\leq\left(\rho^{-N}\int_{\Omega\cap B_{\rho}(x)}u^\gamma(y)\,dy\right)^{1/\gamma}\left(\rho^{-N}|\Omega\cap B_{\rho}(x)|\right)^{{1-1/\gamma}}\to0,
$$
as $\rho\to0$.

\section{Boundedness results}
\label{Br}
In this section we shall show that the solutions found in Theorem \ref{mainTh} and Theorem \ref{secTh} belong to $L^\infty(\Omega)${, with an explicit bound}. The argument follows  by using a Stampacchia's type argument (see \cite[Theorem 3.5]{MazonSegura2013}).
\begin{theorem}
\label{uB}
Let $\gamma>0$ and let $u$  be the limit found in  \eqref{u2} {(if $0<\gamma\leq1$)} or \eqref{u3} {(if $\gamma>1$)}.  Then   $u\in L^\infty(\Omega)$ {and
\begin{equation}
\label{Linf}
\|u\|_{L^\infty(\Omega)}\leq\left(\frac{(N-1)\,\zeta_N\,\|f\|_{L^{N,\infty}(\Omega)}}{N-1-\lambda^+}\right)^{1/\gamma},
\end{equation}
where $\zeta_N$ is the optimal constant in \eqref{bestl}. In particular the bound blows up as $\lambda\uparrow N-1$, in agreement with the optimality result of Section \ref{Opt}.}
\end{theorem}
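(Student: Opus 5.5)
The plan is to prove \eqref{Linf} directly at the level of the limit problem. I will test the equation with $G_k(u)$ and show that, once $k$ exceeds the threshold in \eqref{Linf}, the total variation of $G_k(u)$ together with its boundary trace is bounded by a strictly smaller multiple of itself, which forces $G_k(u)\equiv0$. No Stampacchia iteration on level sets is needed, because the estimate closes in one step through the sharp inequalities in $BV$.

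First I reduce to a positive datum. Assume $f>0$. Then Theorem \ref{secTh} gives ${\rm div}\,{\bf z}\in L^1(\Omega)$ and $-{\rm div}\,{\bf z}=\lambda|x|^{-1}+fu^{-\gamma}$. Theorem \ref{FE} gives $u\in BV(\Omega)$, and Remark \ref{Ra} gives the boundary alternative. For a general $0\le f$, I apply the estimate to the solutions $u_n$ of \eqref{AP} with $f_n=f+1/n$. The resulting bound depends only on $\|f_n\|_{L^{N,\infty}(\Omega)}\to\|f\|_{L^{N,\infty}(\Omega)}$, and it passes to the limit by a.e. convergence along the subsequence of \eqref{CUn5}.

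The key steps, in order, are as follows. Fix $k>0$ and $h>0$, and set $v=T_h(G_k(u))\in BV(\Omega)\cap L^\infty(\Omega)$. Since $T_h\circ G_k$ is nondecreasing and Lipschitz, \cite[Proposition 2.8]{Anzellotti1983} together with $({\bf z},Du)=|Du|$ gives $({\bf z},Dv)=|Dv|$. The function $v$ is bounded and ${\rm div}\,{\bf z}\in L^1$, so the Green formula of Proposition \ref{uz} applies. Using $|[{\bf z},\nu]|\le1$, this yields
$$
\int_\Omega|Dv|+\int_{\partial\Omega}v\,d\mathcal H^{N-1}\le \lambda^+\int_\Omega\frac{v}{|x|}\,dx+\int_\Omega\frac{f}{u^\gamma}v\,dx .
$$
On $\{v>0\}$ we have $u>k$, so the last integral is at most $k^{-\gamma}\int_\Omega fv\,dx$. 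I then apply two sharp inequalities, writing $\|v\|:=\int_\Omega|Dv|+\int_{\partial\Omega}|v|\,d\mathcal H^{N-1}$ for the total variation of the zero extension of $v$:
\begin{itemize}
\item The Hardy inequality in $BV$ with constant $N-1$ gives $\int_\Omega v/|x|\,dx\le \|v\|/(N-1)$.
\item Hölder's inequality in Lorentz spaces, combined with Proposition \ref{LSI} for the zero extension, gives $\int_\Omega fv\,dx\le\|f\|_{L^{N,\infty}(\Omega)}\,\zeta_N\,\|v\|$.
\end{itemize}
Combining these with the display above gives
$$
\Big(1-\frac{\lambda^+}{N-1}-\frac{\zeta_N\|f\|_{L^{N,\infty}(\Omega)}}{k^\gamma}\Big)\|v\|\le0 .
$$
If $k^\gamma>(N-1)\zeta_N\|f\|_{L^{N,\infty}(\Omega)}/(N-1-\lambda^+)$, the bracket is positive, so $\|v\|=0$ and hence $v=0$. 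Letting $h\to\infty$ gives $u\le k$ a.e., and letting $k$ decrease to the threshold yields \eqref{Linf}.

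The step I expect to be the main obstacle is the justification of the Green formula and of the boundary term. Here ${\bf z}$ is merely in $X_{\mathcal M_{loc}}(\Omega)$ in the general setting, and the trace is only encoded through condition $(e)$. This is exactly why I first pass to positive data, where ${\rm div}\,{\bf z}\in L^1$, so $v^*\in L^1(\Omega,{\rm div}\,{\bf z})$ holds trivially for bounded $v$, and $|[{\bf z},\nu]|\le\|{\bf z}\|_\infty\le1$ by Anzellotti's theory. A second, milder point is that for $\gamma>1$ the convergence \eqref{u3} is only local. However, Theorem \ref{FE} and the subsequent lemma already provide $u\in BV(\Omega)$, so the argument above applies verbatim to that limit.
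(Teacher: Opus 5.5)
\textbf{Positive datum.} For $f>0$ your argument works and takes a genuinely different route from the paper's. In that case the limit $u$ of \eqref{u2}/\eqref{u3} lies in $BV(\Omega)$ (by Theorem \ref{FE} if $\gamma>1$), and ${\rm div}\,{\bf z}\in L^1(\Omega)$ by Theorem \ref{secTh}. The Green formula, combined with the $BV$ Hardy inequality \eqref{HardyBV} and Proposition \ref{LSI}, then closes in one step at exactly the threshold in \eqref{Linf}. A bonus of your route is that it gives an a priori bound for any solution with positive datum, not just the constructed limit.

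One step needs correcting. The bound $|[{\bf z},\nu]|\le1$ only gives $\int_{\partial\Omega}v\,(1+[{\bf z},\nu])\,d\mathcal H^{N-1}\ge0$, which has the wrong sign for your first display. What you actually need is the alternative in Remark \ref{Ra}. Wherever the trace of $u$ vanishes, so does the trace of $v=T_h(G_k(u))$. Hence $v\,(1+[{\bf z},\nu])=0$ holds $\mathcal H^{N-1}$-a.e. on $\partial\Omega$, which gives $\int_\Omega|Dv|+\int_{\partial\Omega}v\,d\mathcal H^{N-1}=-\int_\Omega v\,{\rm div}\,{\bf z}\,dx$.

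\textbf{Nonnegative datum: the genuine gap.} The statement concerns one specific function: the limit of $u_p$ in \eqref{u2} or \eqref{u3}, built with the datum $f$ itself. Your $f+1/n$ scheme instead bounds the limit in \eqref{CUn5}. Neither the paper nor your proposal identifies these two functions, since no uniqueness or comparison result for \eqref{P} is available. This matters: for $0<\gamma\le1$, the solution given by Theorem \ref{mainTh} for nonnegative $f$ is exactly the limit in \eqref{u2}, and the $f+1/n$ scheme is only set up for $\gamma>1$. So your proposal leaves that solution without an $L^\infty$ bound.

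Nor can you run your argument directly on that limit. If $f$ vanishes on a set of positive measure, ${\rm div}\,{\bf z}$ is only a locally finite measure. Condition $(e)$ can then be used only with $\phi\in L^1(\Omega,{\rm div}\,{\bf z})$, so $\phi\equiv1$, and with it the boundary alternative, is unavailable. For $\gamma>1$ you do not even know $u\in BV(\Omega)$, since Theorem \ref{FE} requires $f>0$.

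The paper avoids all of this by deriving the key inequality before taking the limit:
$\int_\Omega|DG_k(u)|+\int_{\partial\Omega}|G_k(u)|\,d\mathcal H^{N-1}\le\frac{N-1}{(N-1-\lambda^+)k^\gamma}\int_\Omega fG_k(u)\,dx$,
together with its analogue for $G_k(u)^\gamma$ when $\gamma>1$. It tests \eqref{Ppn} with $G_k(u_n)$ (resp. $G_k(u_n)^\gamma$), applies Young's inequality and the $p$-Hardy inequality, and then lets $n\to\infty$ and $p\to1^+$ using lower semicontinuity. The boundary term comes for free from the zero boundary values of $u_p$, and nothing beyond $f\ge0$ is used.

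To rescue your route you have two options. You could adopt this $p$-level derivation for nonnegative data. Alternatively, you could prove a comparison $u_p^{(f)}\le u_p^{(f+1/n)}$ for the approximating problems and pass to the limit along a common subsequence, so that your positive-data bound for the limits with datum $f+1/n$ transfers to $u$.
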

\dem 
First of all, we define the function $G_k\colon \mathbb{R}\to \mathbb{R}$ as 
$$
G_k(s)=s-T_k(s)\quad\hbox{ for all }\; s\in \mathbb{R},
$$
where $T_k$ is defined as in \eqref{Tk}. 

Suppose that $0<\gamma\leq 1$. Use $G_k(u_n)$ as test function \eqref{Ppn} to obtain 
\begin{equation}
\label{Gku}
\int_{\Omega}|\nabla G_k(u_n)|^p\,dx= \lambda\int_{\Omega}\frac{1}{|x|^p+\frac{1}{n}}|u_n|^{p-2}u_nG_k(u_n)\,dx+\int_{\Omega}\frac{f_nG_k(u_n)}{(u_n+1/n)^\gamma}\,dx.
\end{equation}
 Suppose that $0<\gamma\leq 1$. Passing to limit above, as $n\rightarrow \infty$, one has
$$
\int_{\Omega}|\nabla G_k(u_p)|^p\,dx\leq \lambda\int_{\Omega}\frac{1}{|x|^p}u_p^{p-1}G_k(u_p)\,dx+\int_{\Omega}\frac{fG_k(u_p)}{u_p^\gamma}\,dx.
$$
By Young's and Hardy's inequalities, it follows that
$$
\left(1-\frac{\lambda^+}{p}\left(\frac{p}{N-p}\right)^p\right)\int_{\Omega}|\nabla G_k(u_p)|^p\,dx\leq \frac{\lambda^+(p-1)}{p}\int_{\Omega}\frac{u_p^p}{|x|^p}\,dx+\int_{\Omega}\frac{fG_k(u_p)}{u_p^\gamma}\,dx.
$$
Since $G_k(u_p)=0$ on $\partial\Omega$ and using  again the  Young inequality, one has
$$
\int_{\Omega}|\nabla G_k(u_p)|\,dx+\int_{\partial\Omega}|G_k(u_p)|\,d\mathcal{H}^{N-1}\leq \frac{\frac{\lambda^+(p-1)}{p^2}\int_{\Omega}\frac{u_p^p}{|x|^p}\,dx+\frac{1}{k^\gamma p}\int_{\Omega}fG_k(u_p)\,dx}{1-\lambda^+ p^{-1}(p/(N-p))^p}+\frac{p-1}{p}|\Omega|.
$$
Letting $p\rightarrow 1^+$ above, we obtain
$$
\int_{\Omega}|DG_k(u)|+\int_{\partial\Omega}|G_k(u)|\,d\mathcal{H}^{N-1}\leq \frac{N-1}{(N-1-\lambda^+)k^\gamma}\int_{\Omega}fG_k(u)\,dx.
$$
{Here we used that $\int_{\Omega}u_p^p|x|^{-p}\,dx$ is bounded uniformly in $p\in(1,\bar p]$, by Hardy's inequality and \eqref{Ep}, so that the first term in the numerator vanishes as $p\to1^+$, together with
$$
\lim_{p\to1^+}\frac{1}{1-\lambda^+p^{-1}\left(\frac{p}{N-p}\right)^p}=\frac{N-1}{N-1-\lambda^+}.
$$
As for the last term, $G_k(u_p)\to G_k(u)$ in $L^{1}(\Omega)$ and $(G_k(u_p))_{p}$ is bounded in $BV(\Omega)$, hence in $L^{1^*,1}(\Omega)$ by Proposition \ref{LSI}; since $f\in L^{N,\infty}(\Omega)=\left(L^{1^*,1}(\Omega)\right)'$, an interpolation argument in Lorentz spaces gives $\int_{\Omega}fG_k(u_p)\,dx\to\int_{\Omega}fG_k(u)\,dx$. Alternatively, one may simply use the lower semicontinuity of the left-hand side together with Fatou's lemma on the right-hand side, which suffices for our purposes.}
But this, in turn, implies, by the H\"{o}lder inequality and Proposition \ref{LSI},
$$
\left(1-\frac{(N-1)\zeta_N\|f\|_{L^{N, \infty}(\Omega)}}{(N-1- \lambda^+)k^\gamma}\right)\left(\int_{\Omega}|DG_k(u)|+\int_{\partial\Omega}|G_k(u)|\,d\mathcal{H}^{N-1}\right)\leq 0.
$$
and so $G_k(u)=0$ for every $k$ such that $\frac{(N-1)\zeta_N\|f\|_{L^{N,\infty}(\Omega)}}{(N-1-\lambda^+)k^\gamma}<1$, that is
$$
\int_{\Omega}|DG_k(u)|+\int_{\partial\Omega}|G_k(u)|\,d\mathcal{H}^{N-1}=0\quad\hbox{ for all }\, k> k_0:={\left(\frac{(N-1)\zeta_N\|f\|_{L^{N,\infty}(\Omega)}}{N-1-\lambda^+}\right)^{1/\gamma}}.
$$
Hence  $u\in L^\infty(\Omega)$ {and \eqref{Linf} holds}.

Assume that $\gamma>1$. Use $G_k(u_n)^\gamma$ as test function in \eqref{Ppn} to obtain
$$
\gamma \left(\frac{p}{\gamma+p-1}\right)^p\int_{\Omega}|\nabla G_k(u_n)^{(\gamma+p-1)/p}|^p\,dx=\lambda\int_{\Omega}\frac{u_n^{p-1}G_k(u_n)^\gamma}{|x|^p+1/n}\,dx+\int_{\Omega}\frac{f_nG_k(u_n)^\gamma}{(u_n+1/n)^\gamma}\,dx.
$$
By Young and Hardy inequalities, we have that
\begin{align}
&\gamma\left[\left(\frac{p}{\gamma+p-1}\right)^p-\frac{\lambda^+}{\gamma+p-1}\left(\frac{p}{N-p}\right)^p\right]\int_{\Omega}|\nabla G_k(u_n)^{(\gamma+p-1)/p}|^p\,dx\\
&\leq\frac{\lambda^+(p-1)}{\gamma+p-1}\left(\frac{p}{N-p}\right)^p\int_{\Omega}|\nabla u_n^{(\gamma+p-1)/p}|^p\,dx+\frac{1}{k^\gamma}\int_{\Omega}f_nG_k(u_n)^\gamma\,dx
\end{align}
and, letting $n\to \infty$, by \eqref{CuG} and Lebesgue's dominated convergence theorem, it follows that
\begin{align}
&\gamma\left[\left(\frac{p}{\gamma+p-1}\right)^p-\frac{\lambda^+}{\gamma+p-1}\left(\frac{p}{N-p}\right)^p\right]\int_{\Omega}|\nabla G_k(u_p)^{(\gamma+p-1)/p}|^p\,dx\\
&\leq\frac{\lambda^+(p-1)}{\gamma+p-1}\left(\frac{p}{N-p}\right)^p\limsup_{n\to\infty}\int_{\Omega}|\nabla u_n^{(\gamma+p-1)/p}|^p\,dx+\frac{1}{k^\gamma}\int_{\Omega}fG_k(u_p)^\gamma\,dx.
\end{align}
Since $u_p=0$ on $\partial\Omega$, one has
\begin{align}
&\int_{\Omega}|\nabla G_k(u_p)^{(\gamma+p-1)/p}|\,dx+\int_{\partial\Omega}|G_k(u_p)^{(\gamma+p-1)/p}|\,d\mathcal{H}^{N-1}\leq\frac{1}{p}\int_{\Omega}|\nabla G_k(u_p)^{(\gamma+p-1)/p}|^p\,dx+\frac{p-1}{p}|\Omega|\\
&\leq \frac{\frac{\lambda^+(p-1)}{\gamma+p-1}\left(\frac{p}{N-p}\right)^p\limsup_{n\to\infty}\int_{\Omega}|\nabla u_n^{(\gamma+p-1)/p}|^p\,dx+\frac{1}{k^\gamma}\int_{\Omega}fG_k(u_p)^\gamma\,dx}{\gamma\left[\left(\frac{p}{\gamma+p-1}\right)^p-\frac{\lambda^+}{\gamma+p-1}\left(\frac{p}{N-p}\right)^p\right]}+\frac{p-1}{p}|\Omega|.
\end{align}
Since $G_k(u_p)^{(\gamma+p-1)/p}\rightarrow G_k(u)^\gamma$ in $L^1(\Omega)$ as $p\to1+$, by lower semicontinuity of the norm in $BV(\Omega)$ and H\"{o}lder inequality and Proposition \ref{LSI}, we obtain
\begin{align}
\int_{\Omega}|DG_k(u)^\gamma|\,dx+\int_{\partial\Omega}|G_k(u)^\gamma|\,d\mathcal{H}^{N-1}&\leq \frac{1}{ k^\gamma\left(1-\frac{\lambda^+}{N-1}\right)}\int_{\Omega}fG_k(u)^\gamma\,dx\\
&\leq \frac{\zeta_N \|f\|_{L^{N,\infty}(\Omega)}}{k^\gamma\left(1-\frac{\lambda^+}{N-1}\right)} \left(\int_{\Omega}|DG_k(u)^\gamma|+\int_{\partial\Omega}|G_k(u)^\gamma|\,d\mathcal{H}^{N-1}\right).
\end{align}
Hence
$$
\int_{\Omega}|DG_k(u)^\gamma|\,dx+\int_{\partial\Omega}|G_k(u)^\gamma|\,d\mathcal{H}^{N-1}=0\quad\hbox{ for all }\;k> {k_0=\left(\frac{(N-1)\zeta_N\|f\|_{L^{N,\infty}(\Omega)}}{N-1-\lambda^+}\right)^{1/\gamma}},
$$
{since $1-\frac{\lambda^+}{N-1}=\frac{N-1-\lambda^+}{N-1}$.} But this implies that $u\in L^\infty(\Omega)$ {and \eqref{Linf} holds in this case as well}.
\fim 

{
\bigskip
\noindent{\bf Proof of Theorem \ref{mainTh}.} We can now collect the previous results. Let $\gamma>0$, $\lambda<N-1$ and let $0\leq f\in L^{N,\infty}(\Omega)$.

If $0<\gamma\leq1$, the pair $(u,{\bf z})$ constructed in Section \ref{PT} satisfies $(a)$--$(e)$ of Definition \ref{DS} by Lemmas \ref{ConvZ}--\ref{chiSol}, while $u\in BV(\Omega)$ follows from \eqref{u2}. If $\gamma>1$, conditions $(a)$--$(e)$ still hold by the same lemmas, whereas $u\in BV(\Omega)$ is given by Theorem \ref{FE} when $f$ is positive, and by the approximation procedure of this subsection for a general nonnegative datum. In both cases $u$ is a nonnegative solution to \eqref{P} in the sense of Definition \ref{DS}.

Assertion $(i)$ is a direct consequence of condition $(a)$, as observed in Remark \ref{Rnontriv}: if $|\{u=0\}\cap\{f>0\}|>0$, then $fu^{-\gamma}=+\infty$ on a set of positive measure, contradicting $fu^{-\gamma}\in L^1_{\rm loc}(\Omega)$. Finally, assertion $(ii)$ is {precisely Theorem \ref{uB}, see \eqref{Linf}}. \fim
}

\section{Optimality and some explicit examples}
\label{Opt}

{
In this section we show that the restriction $\lambda<N-1$ is optimal, and we construct an explicit family of solutions to \eqref{P}.

\subsection{Non-existence for $\lambda\geq N-1$}

The following result shows that, as soon as the datum is positive, no solution survives at the threshold nor beyond it. Let us stress that neither $\Omega$ nor $u$ are assumed to enjoy any symmetry.

\begin{theorem}
\label{NoEx}
Let $\gamma>0$, let $\lambda\geq N-1$ and let $f\in L^{N,\infty}(\Omega)$ be a positive function. Then problem \eqref{P} admits no solution in the sense of Definition \ref{DS}.
\end{theorem}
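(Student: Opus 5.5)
The plan is to argue by contradiction. Assume $u$ is a solution in the sense of Definition \ref{DS}, with vector field ${\bf z}$, for some $\lambda\geq N-1$. We then test the equation against a radial cut-off centred at the origin and compare ${\bf z}$ with the calibrating field $x/|x|$, whose divergence is exactly $(N-1)/|x|$. No symmetry of $\Omega$ or of $u$ is needed, because the symmetry is carried entirely by the test function.

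\emph{Step 1 (reduction to the clean form of the equation).} Since $f>0$, Lemma \ref{LKey} gives $u>0$ a.e. in $\Omega$. The proof of Theorem \ref{secTh} then yields $(c)'$ in the weak form \eqref{Mz}:
$$\int_\Omega {\bf z}\cdot\nabla\varphi\,dx=\lambda\int_\Omega\frac{\varphi}{|x|}\,dx+\int_\Omega\frac{f}{u^\gamma}\varphi\,dx\qquad\hbox{for all }\varphi\in C^\infty_c(\Omega).$$
That proof uses only conditions $(a)$--$(d)$ and the sign information, not the restriction $\lambda<N-1$, so it applies verbatim here. By density, the identity extends to every $\varphi\in C^1_c(\Omega)$.

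\emph{Step 2 (radial test function).} Choose $0<\rho<r$ with $\overline{B_r(0)}\subset\Omega$. Let $\psi\in C^1([0,\infty))$ be nonincreasing, with $\psi\equiv1$ on $[0,\rho]$ and $\psi\equiv0$ on $[r,\infty)$, and set $\varphi(x)=\psi(|x|)\in C^1_c(\Omega)$. Then $\nabla\varphi=\psi'(|x|)\,x/|x|$ with $\psi'\leq0$, so $|\nabla\varphi|=-\frac{x}{|x|}\cdot\nabla\varphi$. Since $\|{\bf z}\|_\infty\leq1$, and since $\hbox{div}(x/|x|)=(N-1)/|x|\in L^1_{\rm loc}$ (for $N\geq2$) allows integration by parts,
$$\int_\Omega{\bf z}\cdot\nabla\varphi\,dx\leq\int_\Omega|\nabla\varphi|\,dx=-\int_\Omega\frac{x}{|x|}\cdot\nabla\varphi\,dx=(N-1)\int_\Omega\frac{\varphi}{|x|}\,dx.$$
Alternatively, one can compute both sides in polar coordinates: $-\omega_{N-1}\int_0^r\psi'(t)t^{N-1}\,dt=(N-1)\omega_{N-1}\int_0^r\psi(t)t^{N-2}\,dt$.

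\emph{Step 3 (contradiction).} Combining Steps 1 and 2 gives
$$(\lambda-(N-1))\int_\Omega\frac{\varphi}{|x|}\,dx+\int_\Omega\frac{f}{u^\gamma}\varphi\,dx\leq0.$$
The first term is nonnegative because $\lambda\geq N-1$. For the second term, note that $u\in BV(\Omega)\subset L^1(\Omega)$ is finite a.e. and $f>0$ a.e., so $f u^{-\gamma}>0$ a.e. Since $\varphi=1$ on $B_\rho(0)$, the second integral is strictly positive, which is a contradiction. This also explains why the threshold case $\lambda=N-1$ is excluded: there the singular term alone supplies the strict inequality.

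The only delicate point is Step 1. We must check that the derivation of $(c)'$ from $(c)$ in Theorem \ref{secTh} does not rely on $\lambda<N-1$. It uses $\chi_{\{u>0\}}=1$ a.e., which together with $\chi^*_{\{u>0\}}=1$ makes the left-hand side of $(c)$ equal to $-\hbox{div}\,{\bf z}$. Granting this, the rest is the one-line calibration argument above.
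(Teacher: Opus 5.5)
Your proof is correct and follows essentially the same route as the paper's: Lemma \ref{LKey} gives $(c)'$, then a radial nonincreasing cut-off and $\|{\bf z}\|_\infty\le1$ do the rest. The only difference is that the paper takes $\varphi_\epsilon$ linear on $[r,r+\epsilon]$ and lets $\epsilon\to0$ to get $\int|\nabla\varphi_\epsilon|\,dx\to P(B_r(0))=(N-1)\int_{B_r(0)}|x|^{-1}dx$, whereas your exact identity $\int|\nabla\varphi|\,dx=(N-1)\int\varphi\,|x|^{-1}dx$, valid for every radial nonincreasing $\varphi$, reaches the same inequality with no limit.

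For Step 1, rely on the direct argument in your last paragraph: $u>0$ a.e. forces $\chi^*_{\{u>0\}}\equiv1$, so $(c)$ is literally $(c)'$. Do not claim that the paper's proof of Theorem \ref{secTh} applies verbatim, since that proof passes through \eqref{KeyInq} and \eqref{SupS}, which come from the approximation scheme and not from Definition \ref{DS} alone.
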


\dem Suppose, by contradiction, that $u$ is such a solution and let ${\bf z}$ be the associated vector field. By Lemma \ref{LKey} one has $u>0$ a.e. in $\Omega$, so that $\chi_{\{u>0\}}=1$ a.e. in $\Omega$ and, by Theorem \ref{secTh}, condition $(c)'$ holds. Hence, recalling that $\|{\bf z}\|_{L^\infty(\Omega,\mathbb{R}^N)}\leq1$,
\begin{equation}
\label{OptEq}
\lambda\int_{\Omega}\frac{\varphi}{|x|}\,dx+\int_{\Omega}\frac{f}{u^\gamma}\varphi\,dx=\int_{\Omega}{\bf z}\cdot\nabla\varphi\,dx\leq\int_{\Omega}|\nabla\varphi|\,dx\qquad\hbox{ for all }\;0\leq\varphi\in C^\infty_c(\Omega).
\end{equation}

Let $r>0$ be such that $B_r(0)\subset\subset\Omega$ and let $0<\epsilon<{\rm dist}\,(B_r(0),\partial\Omega)$. Consider the radial function $\varphi_\epsilon(x)=\eta_\epsilon(|x|)$, where $\eta_\epsilon\equiv1$ on $[0,r]$, $\eta_\epsilon(t)=1-\frac{t-r}{\epsilon}$ on $[r,r+\epsilon]$ and $\eta_\epsilon\equiv0$ on $[r+\epsilon,+\infty)$; up to a standard regularization, $\varphi_\epsilon$ is an admissible test function in \eqref{OptEq}. Since $0\leq\varphi_\epsilon\leq1$ and $\varphi_\epsilon\equiv1$ on $B_r(0)$, while
$$
\int_{\Omega}|\nabla\varphi_\epsilon|\,dx=\frac{|B_{r+\epsilon}(0)|-|B_r(0)|}{\epsilon}\longrightarrow N\omega_Nr^{N-1}=P(B_r(0))\qquad\hbox{ as }\;\epsilon\to0,
$$
where $\omega_N=|B_1(0)|$, and since $\lambda\geq N-1>0$, letting $\epsilon\to0$ in \eqref{OptEq} we obtain
$$
\lambda\int_{B_r(0)}\frac{dx}{|x|}+\int_{B_r(0)}\frac{f}{u^\gamma}\,dx\leq N\omega_Nr^{N-1}.
$$
As $\displaystyle\int_{B_r(0)}\frac{dx}{|x|}=N\omega_N\int_0^rs^{N-2}\,ds=\frac{N\omega_N}{N-1}r^{N-1}$, this reads
\begin{equation}
\label{OptKey}
\left(\frac{\lambda}{N-1}-1\right)N\omega_Nr^{N-1}+\int_{B_r(0)}\frac{f}{u^\gamma}\,dx\leq0 .
\end{equation}
Since $\lambda\geq N-1$, the first term in \eqref{OptKey} is nonnegative, while the second one is strictly positive, because $f>0$ a.e. and $u<+\infty$ a.e. in $\Omega$. This is a contradiction.
\fim

\begin{remark}
\label{OptRem}
If $\lambda<N-1$, inequality \eqref{OptKey} is no longer a contradiction but it still carries information: for every ball $B_r(0)\subset\subset\Omega$ centred at the origin, any solution satisfies
$$
\int_{B_r(0)}\frac{f}{u^\gamma}\,dx\leq\left(1-\frac{\lambda}{N-1}\right)N\omega_Nr^{N-1},
$$
an estimate which, exactly as \eqref{L15}, does not depend on the size of $f$.
\end{remark}

\subsection{The threshold at the level of the approximating problems}

The value $\lambda=N-1$ is critical for the approximating problems as well. Consider the homogeneous problem
\begin{equation}
\label{Exp0}
\left\{
\begin{array}{rclr}
-\Delta_1 u&=&\dfrac{\lambda}{|x|}\,{\rm Sgn}\,(u)&\mbox{in}\; \Omega,\\[1ex]
u&=&0&\mbox{on}\; \partial\Omega,
\end{array}
\right.
\end{equation}
together with its $p$-Laplace approximations
\begin{equation}
\label{Exp}
\left\{
\begin{array}{rclr}
-\Delta_p u&=&\lambda\dfrac{|u|^{p-2}u}{|x|^p}&\mbox{ in }\;\Omega,\\[1ex]
u&=&0&\mbox{on }\; \partial\Omega .
\end{array}
\right.
\end{equation}

Assume first that $\lambda>N-1$. Since $\left(\frac{N-p}{p}\right)^p\to N-1$ as $p\to1^+$, there exists $\bar p>1$ such that
$$
\lambda>\left(\frac{N-p}{p}\right)^p\qquad\mbox{ for all }\;p\in(1,\bar p],
$$
and therefore, by \cite[Theorem 3.4]{AbdellaouiPeral2003}, problem \eqref{Exp} has no solution at all, so that the whole approximation procedure breaks down. Of course this is weaker than, and consistent with, Theorem \ref{NoEx}, which concerns the limit problem directly.

Assume now that $\lambda=N-1$. Here the behaviour depends on the dimension, through the sign of
$$
\frac{d}{dp}\left[p\log\frac{N-p}{p}\right]_{\big|p=1}=\log(N-1)-1-\frac{1}{N-1},
$$
which is positive if and only if $N\geq5$. Consequently:
\begin{itemize}
\item[$(i)$] if $N\geq5$, then $\left(\frac{N-p}{p}\right)^p>N-1=\lambda$ for all $p>1$ close enough to $1$, so that \eqref{Exp} is well posed; testing \eqref{Exp} with $u_p$ and using Hardy's inequality exactly as in \eqref{I1} one gets
$$
\left(1-\lambda\left(\frac{p}{N-p}\right)^p\right)\int_{\Omega}|\nabla u_p|^p\,dx\leq0,
$$
whence $u_p\equiv0$ is the unique solution to \eqref{Exp}. Thus the limit of the approximating scheme is the trivial function; let us stress that this alone does not rule out the existence of non-trivial solutions to \eqref{Exp0}, it only says that they cannot be produced by our procedure;
\item[$(ii)$] if $2\leq N\leq4$, then $\lambda=N-1>\left(\frac{N-p}{p}\right)^p$ for all $p>1$ close enough to $1$ and, as in the case $\lambda>N-1$, problem \eqref{Exp} has no solution at all.
\end{itemize}
In both cases $\lambda=N-1$ is already critical at the level $p>1$.

Finally, let us observe that \eqref{Exp0} has no non-trivial solution with $u>0$ a.e. in $\Omega$. Indeed, for such a solution the Green formula of Proposition \ref{uz}, together with $(d)$, $(e)$ and Lemma \ref{IdUZ}, gives
$$
\int_{\Omega}|Du|+\int_{\partial\Omega}|u|\,d\mathcal{H}^{N-1}=-\int_{\Omega}u^*\,{\rm div}\,{\bf z}=\lambda\int_{\Omega}\frac{u}{|x|}\,dx=(N-1)\int_{\Omega}\frac{u}{|x|}\,dx,
$$
that is, equality holds in the Hardy inequality
\begin{equation}
\label{HardyBV}
\int_{\Omega}\frac{|v|}{|x|}\,dx\leq\frac{1}{N-1}\left(\int_{\Omega}|Dv|+\int_{\partial\Omega}|v|\,d\mathcal{H}^{N-1}\right)\qquad\mbox{ for all }\; v\in BV(\Omega),
\end{equation}
whose best constant $\frac{1}{N-1}$ is known not to be attained (see \cite{crt, WangWillem}); hence $u\equiv0$.  

\subsection{A family of explicit solutions}

Let us now construct explicit bounded positive solutions to \eqref{P} with a positive datum $f\in L^{N,\infty}(\Omega)$. Take $\Omega=B_1(0)$ and look for a radially symmetric non-increasing solution $u(x)=g(|x|)$, where $g$ is positive on $[0,1)$ and non-increasing. Then $Du=g'(|x|)\frac{x}{|x|}$ with $g'\leq0$, so that the natural choice of the vector field is
$$
{\bf z}=\frac{Du}{|Du|}=-\frac{x}{|x|},\qquad\hbox{ which gives }\qquad -{\rm div}\,{\bf z}=\frac{N-1}{|x|},
$$
together with $({\bf z}, Du)=|Du|$ as measures and $[{\bf z},\nu]=-1$ on $\partial B_1(0)$, $\nu$ being the outward unit normal. With this ansatz, \eqref{P} reduces to the pointwise identity
\begin{equation}
\label{PtId}
\frac{N-1}{|x|}=\frac{\lambda}{|x|}+\frac{f(x)}{u(x)^\gamma},\qquad\hbox{ that is }\qquad \frac{N-1-\lambda}{|x|}=\frac{f(x)}{g(|x|)^\gamma}.
\end{equation}
In particular the datum and the profile cannot be prescribed independently of each other. What \eqref{PtId} does provide is the following family of examples: given $\lambda<N-1$, $\gamma>0$ and any non-increasing $g\in C^1((0,1])$ with $g>0$ on $(0,1)$, $g$ bounded and $g(1)=0$, the function $u(x)=g(|x|)$ is a solution to \eqref{P} in $B_1(0)$, in the sense of Definition \ref{DS}, corresponding to the datum
\begin{equation}
\label{FamF}
f(x)=\frac{(N-1-\lambda)\,g(|x|)^\gamma}{|x|}\,,
\end{equation}
which is positive in $B_1(0)\setminus\{0\}$ and belongs to $L^{N,\infty}(B_1(0))$, since $|x|^{-1}$ does.

Let us stress that, conversely, the datum cannot be chosen arbitrarily within this ansatz. For instance, for $f\equiv c_0>0$ identity \eqref{PtId} would force
$$
g(r)=\left(\frac{c_0\,r}{N-1-\lambda}\right)^{1/\gamma},
$$
which is increasing and does not vanish at $r=1$, in contrast with the assumption that $u$ be radially non-increasing and, therefore, with the very choice ${\bf z}=-x/|x|$. This is by no means in contradiction with Theorem \ref{mainTh}: it only means that, for a constant datum, the solution is not of the above form. Indeed the vector field ${\bf z}$ is not determined by $u$ on the set where $Du$ vanishes, and it is precisely there that the solution corresponding to a constant datum is expected to live.

\begin{example}
Let $N>2$, $\gamma=1$, $\lambda=N-2<N-1$ and choose
$$
g(r)=\frac{1-r^{N-1}}{N-1},
$$
which is non-increasing on $[0,1]$ and vanishes at $r=1$. Since $N-1-\lambda=1$, formula \eqref{FamF} gives
$$
f(x)=\frac{1-|x|^{N-1}}{(N-1)|x|}>0\qquad\hbox{ for all }\;x\in B_1(0)\setminus\{0\},
$$
and $f\in L^{N,\infty}(B_1(0))$. Accordingly,
$$
u(x)=\frac{1-|x|^{N-1}}{N-1}>0\quad\hbox{ in }\;B_1(0),\qquad {\bf z}=-\frac{x}{|x|},\qquad s(x)=1\in{\rm Sgn}\,(u(x)),
$$
is a bounded solution, in the sense of Definition \ref{DS}, to
\begin{equation}
\left\{
\begin{array}{rclr}
-\Delta_1u& = &\dfrac{N-2}{|x|}+\dfrac{f}{u}&\hbox{ in }\; B_1(0),\\[1ex]
              u& = & 0&\hbox{ on }\; \partial B_1(0).
\end{array}
\right.
\end{equation}
Indeed $\frac{f}{u}=\frac{1}{|x|}$, so that
$$
-{\rm div}\,{\bf z}=\frac{N-1}{|x|}=\frac{N-2}{|x|}+\frac{1}{|x|}=\frac{\lambda}{|x|}s(x)+\frac{f}{u}\qquad\hbox{ in }\;B_1(0);
$$
moreover $\nabla u=-|x|^{N-2}\frac{x}{|x|}$ gives ${\bf z}\cdot\nabla u=|x|^{N-2}=|\nabla u|$, and $u=0$ on $\partial B_1(0)$ (in addition, $[{\bf z},\nu]=-1$).
\end{example}
}

{\bf Acknowledgment:}
Francesco Petitta is supported by GNAMPA - Istituto Nazionale di Alta Matematica, Italy,   Juan C. Ortiz Chata is partially supported by FAPESP 2021/08272-6 and 2022-06050 and grant 731/2026, Paraíba State Research Foundation (FAPESQ) , Brazil.

{\bf AI Statement: }  During the preparation of this work, the authors used AI assistance (Claude Opus 5) to support structural organization and language polishing. The core mathematical content was developed entirely by the authors, who take full responsibility for the content of the paper.

\end{document}